\newif\ifsubmit
\definecolor{LightGray}{gray}{0.25}
\newcommand{\ra}[1]{\renewcommand{\arraystretch}{#1}}
\title{GeNIOS: an (almost) second-order operator-splitting solver for large-scale convex optimization}
\author{
    Theo Diamandis \and 
    Zachary Frangella \and 
    Shipu Zhao \and
    Bartolomeo Stellato \and 
    Madeleine Udell
    }
\author{
    Theo Diamandis  \\
    \texttt{tdiamand@mit.edu}
    \and Zachary Frangella  \\
    \texttt{zfran@stanford.edu} 
    \and Shipu Zhao \\
    \texttt{sz533@cornell.edu}
    \and Bartolomeo Stellato\\
    \texttt{bstellato@princeton.edu}
    \and Madeleine Udell\\
    \texttt{udell@stanford.edu}
    }
    \titlerunning{GeNIOS: a solver for large-scale convex optimization}
    \institute{T. Diamandis \at
                  Massachusetts Institute of Technology\\
                  Cambridge, MA, USA \\
                  \email{tdiamand@mit.edu}           
    }
    \date{Received: date / Accepted: date}
    \date{October 2023}
\newcommand{\method}{\texttt{GeNIOS}}
\newcommand{\mlsolver}{\texttt{MlSolver}}
\newcommand{\qpsolver}{\texttt{QPSolver}}
\newcommand{\genericsolver}{\texttt{GenericSolver}}
\newcommand{\osqp}{\texttt{OSQP}}
\newcommand{\cosmo}{\texttt{COSMO}}
\newcommand{\cf}{{\it cf. }}
\newcommand{\eg}{{\it e.g., }}
\newcommand{\ie}{{\it i.e., }}
\newcommand{\ones}{\mathbf 1}
\newcommand{\reals}{\mathbf{R}}
\newcommand{\symm}{\mathbf{S}}
\newcommand{\eps}{\varepsilon}
\renewcommand{\epsilon}{\varepsilon}
\newcommand{\abs}[1]{\lvert{#1}\rvert}
\newcommand{\diag}[1]{\mathop{{\bf diag}\left({#1}\right)}}
\newcommand{\bmat}[1]{\begin{bmatrix}#1\end{bmatrix}}
\newcommand{\argmin}{\mathop{\rm argmin}}
\newcommand{\prox}{\mathbf{prox}}
    \newtheorem{theorem}{Theorem}
    \newtheorem{lemma}[theorem]{Lemma}
    \newtheorem{proposition}[theorem]{Proposition}
\newif\iftodos
\begin{document}

\maketitle

\begin{abstract}
    We introduce the GEneralized Newton Inexact Operator Splitting solver (\method{}) for large-scale convex optimization.
    \method{} speeds up ADMM by approximately solving approximate subproblems: 
    it uses a second-order approximation to the most challenging ADMM subproblem and solves it inexactly with a fast randomized solver.
    Despite these approximations, \method{} retains the convergence rate of classic ADMM and can detect primal and dual infeasibility from the algorithm iterates.
    At each iteration, the algorithm solves a positive-definite linear system that arises from a second-order approximation of the first subproblem and computes an approximate proximal operator.
    \method{} solves the linear system using an indirect solver with a randomized preconditioner, 
    making it particularly useful for large-scale problems with dense data.
    Our high-performance open-source implementation in Julia allows users to specify convex optimization problems directly 
    (with or without conic reformulation) and allows extensive customization.
    We illustrate \method{}'s performance on a variety of problem types.
    Notably, \method{} is up to ten times faster than existing solvers on large-scale, dense problems.
    \ifsubmit
        \keywords{Convex optimization \and Operator splitting \and Inexact ADMM \and Large-scale optimization \and Julia}
        \subclass{90C25 \and 90C06 \and 90C90 \and 65K05 \and 65K10}
    \fi
\end{abstract}

\section{Introduction}\label{sec:intro}
Data sets in modern optimization problems are large, motivating the search for optimization algorithms that scale well with the problem size.
The alternating direction method of multipliers (ADMM) is a particularly powerful algorithm for tackling these large, data-driven optimization problems.
Compared to interior point methods,
ADMM has a modest per-iteration cost and is easy to parallelize.
While the method is slow to produce a high-accuracy solution,
ADMM often finds a low-accuracy solution quickly,
which usually suffices for problems with real-world---and often noisy---data.
However, as problem sizes increase, even ADMM iterations can become unacceptably slow.
In particular, similar to~\cite{schubiger2020gpu}, we observe slowdowns for problems with 
data matrices with tens of millions of non-zeros or more---a scale easily exceeded by problems with dense datasets.
For example, a dense LASSO problem on, \eg a gene expression dataset
with $n=1,000$ samples and $p=10,000$ variables is already this large.

In this paper, we introduce the GEneralized Newton Inexact Operator Splitting solver (\method{}, pronounced ``genie-\={o}s''), a new inexact ADMM solver for convex optimization problems.
\method{} is designed to solve any convex optimization problem that can be represented as the sum of a smooth and non-smooth term, where the non-smooth term admits a tractable proximal operators.
This problem class includes standard-form LP, QP, SOCP, and SDP, and is particularly well-suited for regularized statistical learning problems such as the Lasso.
\method{} speeds up ADMM by approximating the smooth ADMM subproblem at each iteration, and then solving these approximate subproblems inexactly. 
Each iteration has two steps: 
1) solve a linear system that results from a second-order approximation to the smooth ADMM subproblem inexactly with a fast randomized solver, preconditioned conjugate gradient with the Nystr\"{o}m preconditioner~\cite{frangella2023randomized}, which offers a particular advantage for large-scale dense linear systems;
2) compute an approximate proximal operator.
We observe speedups of up to 50$\times$ compared to classic ADMM,
with the largest speedups on large-scale dense optimization problems.

\method{} maintains classic ADMM convergence guarantees by controlling subproblem errors, as outlined in recent theoretical work~\cite{frangella2023linear}.
Existing ADMM algorithms such as imsPADMM~\cite{chen2017efficient} have theory that supports approximate subproblems with inexact solves, but the available implementations do not exploit this level of generality. 
See the discussion in section~\ref{subsec:related_work} for further details.
Thus, to the best of our knowledge, \method{} is the first general-purpose ADMM solver that exploits both forms of inexactness: subproblem approximation and inexact solves.

\subsection{The optimization problem}
Consider the optimization problem
\begin{equation}\label{eq:problem-formulation}
    \begin{aligned}
        &\text{minimize}     && f(x) + g(z) \\
        &\text{subject to}   && Mx - z = c,
    \end{aligned}
\end{equation}
where $x \in \reals^n$ and $z \in \reals^m$ are decision variables, and $f:\reals^n \to \reals$, $g:\reals^m \to \reals \cup \{+\infty\}$, $M \in \reals^{m \times n}$, and $c \in \reals^m$ are the problem data.
Assume that the function $f$ is smooth and convex, and that the function $g$ is convex, proper, and lower-semicontinuous.
Thus, problem~\eqref{eq:problem-formulation} is a convex optimization problem, which we will refer to as a \emph{convex program}.
The flexibility of this formulation can provide an important speedup for statistical learning problems relative to conic reformulation.
The formulation can also be specialized to recover many special cases, including quadratic programs and conic programs.

\paragraph{Quadratic programs.}
A \emph{quadratic program} (QP) has the form
\[
    \begin{aligned}
        &\text{minimize}     && (1/2)x^TPx + q^Tx \\
        &\text{subject to}   && l \le Mx \le u,
    \end{aligned}
\]
where $x \in \reals^n$ is the decision variable and $P \in \symm_+^n$, $q \in \reals^n$, $M \in \reals^{m \times n}$, $l \in \reals^m$, and $u \in \reals^m$ are the problem data.
Linear equality constraints are encoded by setting $l_i = u_i$ for some $i \in \{1, \dots, m\}$.
A QP is a special case of~\eqref{eq:problem-formulation}
with 
\[
    f(x) = (1/2)x^TPx + q^Tx \qquad \text{and} \qquad  g(z) = I_{[l, u]}(z) =
    \begin{cases}
        0 & l \le z \le u \\
        +\infty & \text{otherwise,}
    \end{cases}
\]
and $c = 0$.
We call $I_S(z)$ the \emph{indicator function} of the set $S$.
Linear programs can be written in this form by setting $P = 0$.
QPs are ubiquitous in practice and encompass a wide variety of problems in disparate fields:
portfolio optimization in finance~\cite{markowitz1952portfolio,boyd2017multiperiodtradingportfolio}; model predictive control~\cite{garcia1989mpc,rawlings2000tutorialmpc}; denoising in signal processing~\cite{palomar2010convex}; model fitting in machine learning~\cite{sra2012optimization}; and the transport problem in operations research~\cite{kantorovich1948problem,dantzig1951application}, among others.

\paragraph{Conic programs.}
Let the function $g$ instead be the indicator function of a convex cone to recover the \emph{conic program}:
\begin{equation}
\label{eq:conic-program-formulation}
    \begin{aligned}
        &\text{minimize}     && (1/2)x^TPx + q^Tx \\
        &\text{subject to}   && Mx - z = c \\
                            &&& z \in \cal K,
    \end{aligned}
\end{equation}
where $\mathcal{K}$ is a non-empty, closed, convex cone.
Again, we recognize~\eqref{eq:conic-program-formulation} as a special case of~\eqref{eq:problem-formulation} with
\[
    f(x) = (1/2)x^TPx + q^Tx \qquad \text{and} \qquad  g(z) = I_{\mathcal{K}}(z).
\]
Any convex optimization problem can be written in this form~\cite{nesterov1992conic}, including linear programs, quadratic programs, second-order cone programs (SOCPs), and semidefinite programs (SDPs),
and many modeling languages can translate convex optimization problems to conic form~\cite{grant2014cvx,diamond2016cvxpy,agrawal2018cvxpy,convexjl}. 
As a result, this formulation is used by many of the popular convex optimization solvers (\eg \texttt{SCS}~\cite{scs1,scs2}, \cosmo{}~\cite{cosmo}, \texttt{Hypatia}~\cite{hypatia}, and \texttt{Mosek}~\cite{mosek}).
SOCPs appear in robust optimization~\cite{ben1998robust,ben1999robust,ben2009robust}, model predictive control~\cite{blackmore2010rocket}, and many engineering design problems~\cite{lobo1998applications}.
Applications of SDPs include convex relaxations of binary optimization problems~\cite{lovasz1991cones}, experiment design~\cite[\S7.5]{cvxbook}, circuit design~\cite{gamal1997circuit,vandenberghe1998circuit}, and sum-of-squares programs~\cite{blekherman2012semidefinite,parrilo2003semidefinite,laurent2009sums}.

\paragraph{Machine learning problems.}
Consider machine learning problems of the form
\[
\begin{aligned}
&\text{minimize} && \sum_{i=1}^N \ell(a_i^Tx - b_i) + (1/2)\lambda_2 \|x\|_2^2 + \lambda_1 \|x\|_1,
\end{aligned}
\]
where $\ell:\reals \to \reals_+$ is a convex per-sample loss function, $\{(a_i, b_i)\}_{i=1}^N \subseteq \reals^n \times \reals$ are problem data, and $\lambda_1$, $\lambda_2 \ge 0$ are regularization parameters.
In the framework of~\eqref{eq:problem-formulation}, take
\[
    f(x) = \sum_{i=1}^N \ell(a_i^Tx - b_i) + (1/2)\lambda_2 \|x\|_2^2 
    \qquad \text{and} \qquad  
    g(z) = \lambda_1 \|z\|_1,
\]
and $M = I$, $c = 0$.
The conic reformulation of this problem typically has many additional variables.
As an example, consider the $\ell_1$-regularized logistic regression problem:
\[
\begin{aligned}
& \text{minimize} && \sum_{i=1}^N \log\left(1 + \exp(b_ia_i^Tx)\right) + \lambda_1 \|z\|_1 \\
& \text{subject to} && x - z = 0.
\end{aligned}
\]
The reformulation of logistic regression as a conic program (specifically, an exponential cone program) has at least $2n + 3N$ variables (see appendix~\ref{app:logistic-conic}).
In general, this reformulation slows solve time since the per-iteration time scales superlinearly in the problem size. 
For example, if the dominant operation is an $O(n^2)$ matrix-vector product, increasing the number of variables by a factor of $5$ can increase solve time by a factor of $25$.
Handling machine learning problems directly via \method{} provides a significant performance boost over general-purpose conic solvers.

\subsection{Solution methods}
\label{subsec:related_work}
A variety of algorithms can be used to solve~\eqref{eq:problem-formulation} and its specific subclasses.
We briefly review some of the most popular and relevant methods.

\paragraph{Interior point methods and solvers.}
Primal-dual interior point methods (IPMs), with roots largely in work by Karmarkar, Nesterov, Nemirovisky, and Mehrotra~\cite{karmarkar1984new,nesterov1994interior,mehrotra1992implementation}, have historically been the method of choice for convex optimization.
Commercial convex optimization solvers, including \texttt{Gurobi}~\cite{gurobi}, \texttt{Mosek}~\cite{mosek}, and \texttt{CVXGEN}~\cite{mattingley2012cvxgen} use IPMs.
Several open-source IPM solvers exist as well, including \texttt{Clarabel}~\cite{chen2023efficient}, \texttt{ECOS}~\cite{domahidi2013ecos}, \texttt{CVXOPT}~\cite{andersen2013cvxopt}, and \texttt{Hypatia}~\cite{hypatia}.
Most similar to the spirit of our work, \texttt{Hypatia} goes beyond the set of \emph{standard cones} (usually the positive orthant, second order cone, semidefinite cone, exponential cone, and power cone) used by other solvers and implements many \emph{exotic cones}, which allow the user to more naturally specify the problem and avoid problem size bloat under reformulation. 
Still, this solver uses a conic formulation~\eqref{eq:conic-program-formulation}, just with an expanded set of cones $\mathcal{K}$.
IPMs quickly produce very accurate solutions for small and medium-sized problems.
However, they cannot be easily warm-started, and they do not scale well for very-large problems due to the formation and factorization of a large matrix at each iteration.
A natural alternative to address the scaling issue is to consider indirect methods for solving the linear system, as they only require matrix-vector products.
Unfortunately, the condition number of the associated linear system becomes large as the solver nears a solution, rendering indirect methods for solving the system ineffective~\cite{gondzio2012interior}.

\paragraph{First-order methods.}
The poor scaling of interior point methods and rise of large-scale, data-driven optimization prompted the resurgence of first-order methods.
These methods can be implemented so that the iteration time is dominated by matrix-vector product computations, which can be accelerated on modern computing architectures.
Although first-order methods are slow to converge to high accuracy solutions, they usually produce moderately accurate solutions quickly.
Two of the most common methods in solvers today are ADMM~\cite{glowinskiadmm1,gabayadmm2,douglas1956numerical,lions1979splitting,gabay1983applications} (see~\cite{boyd2011distributed} for a modern survey) and primal-dual hybrid gradient (PDHG), also called Chambolle-Pock~\cite{chambolle2011first}.
Both of these methods (and many others) are special cases of Rockafellar's proximal point algorithm~\cite{rockafellar1976monotone,eckstein1992douglas} (see~\cite[\S 3]{ryu2022large} and~\cite{zhao2021automatic} for related derivations).

\paragraph{First-order solvers.}
In the last decade, many open-source solvers have implemented variants of ADMM or PDHG.
The conic solver \texttt{SCS}~\cite{scs1,scs2} applies ADMM directly to the homogenous self-dual embedding~\cite{ye1994nl,xu1996simplified} of a conic program and was the first ADMM-based solver that could handle infeasible or unbounded problems.
Later, the QP solver \osqp{}~\cite{osqp} and conic solver \cosmo{}~\cite{cosmo} used the results of~\cite{banjac2019infeasibility} to apply ADMM more directly to~\eqref{eq:conic-program-formulation} while still being able to detect infeasibility and unboundedness by looking at differences of the iterates.
PDHG has been applied to solve large-scale LPs in \texttt{PDLP}~\cite{pdlp1,pdlp2} and SDPs in \texttt{ProxSDP}~\cite{proxsdp2022}.
Notably, PDHG does not require a linear system solve at each iteration for conic programs of the form~\eqref{eq:conic-program-formulation}.
Both \texttt{PDLP} and \texttt{ProxSDP} use the method of~\cite{applegate2021infeasibility} to detect infeasibility, but to the best of our knowledge, it's unclear to what extent these theoretical results extend to the case of conic programs with inexact subproblem solves.
The ADMM-based solvers require the solution of a linear system at each iteration, and
although this system can be solved by indirect methods, ill-conditioning of the problem data---a common phenomenon in real-world data matrices~\cite{udell2019big}---can slow convergence of these methods.
In addition, all of these solvers require problems to be passed in a conic form resembling~\eqref{eq:conic-program-formulation}, which typically leads to a substantial increase in problem size, especially in machine learning problems.

\paragraph{Beyond conic programs.}
All of the solvers discussed solve conic problems of a form similar to~\eqref{eq:conic-program-formulation}, or solve a specialized version of this form more tailored to a specific problem class (\eg LPs or QPs).
While this form allows solvers to handle essentially all convex optimization problems provided by a user (usually with the help of a modeling framework such as \texttt{JuMP}~\cite{jump} or \texttt{Convex.jl}~\cite{convexjl}), transforming these problems to conic form can make the problem more difficult to solve by increasing the size or obscuring the structure of the constraint matrix $M$ or objective matrix $P$.
Another line of work avoids transforming problems into conic form by building solvers directly on a library of proximal operators, including
\texttt{Epsilon}~\cite{wytock2015prox}, which is unmaintained, \texttt{POGS}~\cite{fougner2018parameter}, which requires a separable objective, and \texttt{ProxImaL}~\cite{heide2016proximal}, which focuses on image optimization problems.
In general, defining $M$ directly as a linear operator (rather than as a concrete matrix) speeds up computation but can be difficult to achieve for solvers that require a conic form.
\texttt{SCS} does support matrix-free linear operators but still requires a conic form problem~\cite{diamond2016matrix}.

\paragraph{Inexact ADMM.}
It is well known that ADMM applied to~\eqref{eq:problem-formulation} converges to an optimum at an $O(1/k)$ rate if both subproblems are solved exactly~\cite{he20121,monteiro2013iteration}.
Classic work~\cite{eckstein1992douglas} established that ADMM also converges when subproblems are solved inexactly, provided the errors of subproblems are summable.
A more contemporary line of work~\cite{ouyang2015accelerated,deng2016global,chen2017efficient} considers replacing the $x$- and $z$-subproblems of ADMM with an approximate subproblem that is easier to solve.
The $x$-subproblem is typically approximated as a quadratic optimization, which can be solved with any linear system solver.
These papers establish convergence of the resulting methods provided the approximate subproblems are solved exactly. 
In recent work, the authors show that algorithms that use inexact solves of approximate ADMM subproblems preserve ADMM's $O(1/k)$ convergence rate \cite{frangella2023linear},
and that accelerating ADMM with randomized Nystr{\"o}m preconditioning and approximate $x$-subproblem solves yields significant speed-ups over standard solvers for a variety of machine learning problems \cite{pmlr-v162-zhao22a},
providing a strong motivation for the more general solver presented here.
Second-order subproblem approximations are also important in solvers based on the augmented Lagrangian method, 
such as the \texttt{ALADIN} solver~\cite{houska2016augmented}.

The imsPADMM algorithm from \cite{chen2017efficient} shares many similarities with the algorithmic framework in this paper:
both use function linearization, inexact subproblem solving, and non-isotropic quadratic penalty terms. 
However, they differ significantly in their algorithmic goals, problem classes, and implementation details. 
GeNIOS is a general convex first-order solver for problems that fit in memory but are expensive to solve exactly, while imsPADMM focuses on high-dimensional linearly constrained convex composite quadratic conic programs. 
GeNIOS uses a variable metric based on the Hessian, updates all coordinates simultaneously, and employs randomized Nyström preconditioning for solving the $x$-subproblem. 
In contrast, imsPADMM uses a fixed metric, Gauss-Seidel updates, and uses PCG with a preconditioner constructed via the Lanczos algorithm to accelerate solution of the subproblems. 
For large-scale problems that fit in memory, 
GeNIOS is likely to be more efficient as it can update all coordinates at once, better leveraging the massive parallelism of modern computing hardware.

While the code for imsPADMM is not available, the algorithms QSDPNAL~\cite{li2018qsdpnal} and QPPAL~\cite{liang2022qppal} build on~\cite{chen2017efficient} and use many similar ideas. 
However, the associated codes target only a specific problem class (QP or SDP, respectively) and are not intended to be used outside the research setting.
In contrast, \method{} offers a variety of easy-to-use interfaces, including a QP interface, an interface that accepts problems specified via the JuMP modeling language \cite{jump}, and a general interface that accepts a gradient oracle and proximal operator and can handle problems like logistic regression without conic reformulation.

\begin{table}[H]
    \centering
    \ra{1.3}
    \small
\ifsubmit\begin{adjustbox}{max width=\textwidth}\fi
\begin{tabular}{@{}lrrrrr@{}}
\toprule
&\method{} & \cosmo{} & \osqp{} & \texttt{SCS} & \texttt{ProxSDP} \\
\midrule
method                          & ADMM  & ADMM & ADMM & ADMM & PDHG \\
interface                       & any convex  & conic & QP & conic & SOCP, SDP \\
approximate subproblems         & yes   & no    & no    & no    & no\\
linear system solver            & indirect  & (in)direct   & (in)direct   & (in)direct & n/a \\
inexact solves     & yes & yes & GPU only & yes & n/a \\
preconditioning & Nystr\"{o}m & diagonal & diagonal & diagonal & n/a \\
inexact projection              & yes  & no &  n/a  & no & yes \\
\bottomrule
\end{tabular}
\ifsubmit\end{adjustbox}\fi
\caption{
    \method{} offers a more flexible interface and exploits approximations and inexactness more than similar existing solvers. 
    \texttt{ProxSDP} uses an inexact projection onto the positive semidefinite cone. 
    A conic interface indicates that the solver solves~\eqref{eq:conic-program-formulation} for the full set of standard cones: the positive orthant, second-order cone, positive semidefinite cone, exponential cone, and power cone.
}
    \label{tab:solvers-compare}
\end{table}

\subsection{Contributions}
This paper showcases, through the \method{} solver, how several recent theoretical ideas can be combined to speed up ADMM while preserving convergence guarantees.
Our contributions can be summarized as follows:
\begin{itemize}
    \item We develop the \texttt{GeNIOS.jl} solver: 
    the first open-source solver
    that
    \begin{itemize}
        \item accelerates subproblem solves by inexactly solving approximate ADMM subproblems using Nystr\"{o}m preconditioning; and
        \item allows direct specification of ADMM problems via function, gradient and Hessian oracles for~$f$ and function and proximal operator oracles for~$g$.
    \end{itemize}
    \item We show that \method{} can dectect infeasibility in conic programs despite inexact solves using~\cite{rontsis2022efficient}, and we show that \method{} converges at the standard $O(1/k)$ rate (and faster when the problem is strongly convex) based on results from the authors' prior work~\cite{frangella2023linear}. 
    
    \item We show how \method{} allows the user to exploit problem structure by specifying ADMM problems directly (not in conic form) and by using the Julia programming language's multiple dispatch to implement efficient linear operators, including ones that can run on the GPU.
    \item We showcase \method{}'s up to 50$\times$ speedup over classic ADMM on a variety of optimization problems with real-world and simulated data.
\end{itemize}
The code is available online at 
\begin{center}
    \texttt{https://github.com/tjdiamandis/GeNIOS.jl}
\end{center}
with documentation that includes several examples.
Table~\ref{tab:solvers-compare} compares \method{} to the most similar existing solvers.

\paragraph{Roadmap.}
We overview the \method{} algorithm in~\S\ref{sec:method}, including the overall method, convergence guarantees, infeasibility detection, and randomized preconditioning for the linear system solve at each iteration.
In~\S\ref{sec:apps}, we discuss the solver interface for general convex optimization problems and the specialized interfaces for quadratic programming and machine learning problems.
In~\S\ref{sec:experiments}, we numerically demonstrate the performance improvements gained by leveraging inexactness, randomized preconditioning, and the more natural problem formulations allowed by \method{}.
Finally, we point to some directions of future work in~\S\ref{sec:conclusion}.


\section{Method}\label{sec:method}
Our method \method{} uses inexact ADMM and techniques from randomized numerical linear algebra to speed up solve times on large-scale optimization problems.
Recall that \method{} solves convex problems in the form~\eqref{eq:problem-formulation},
\begin{equation}\tag{\ref{eq:problem-formulation}}
\begin{aligned}
    &\text{minimize}     && f(x) + g(z) \\
    &\text{subject to}   && Mx - z = c,
\end{aligned}
\end{equation}
where $x \in \reals^n$ and $z \in \reals^m$ are decision variables, and $f: \reals^n \to \reals$, $g: \reals^m \to \reals \cup \{+\infty\}$, $M \in \reals^{m \times n}$ and $c \in \reals^m$ are the problem data.
The function $f$ is smooth and convex, and the function $g$ is convex, proper, and lower-semicontinuous.
\method{} requires the ability to evaluate $f$ and $g$, the gradient $\nabla f$, a Hessian-vector product (HVP) $v \mapsto \nabla^2 f(x) v$, and the proximal operator of $g$,
\[
\prox_{g/\rho}(v) = \argmin_{\tilde z} g(\tilde z) + (\rho/2)\|\tilde z - v\|_2^2.
\]
The HVP and proximal operator may be approximate, subject to a condition on the incurred errors.
This flexibility allows \method{} to easily model a variety of problems of interest, as gradients and HVPs can be easily specified---including with automatic differentiation---and proximal operators can be efficiently computed for many functions (see, \eg\cite{combettes2011proximal,parikh2014proximal,wytock2015prox} and references therein).
As a result, \method{} not only handles conic programs but also specializes to problems with bespoke objective functions,
such as robust regression problems in machine learning.

\method{} replaces the exact subproblem solutions of classic ADMM with inexact ones.
The standard (scaled) ADMM algorithm applied to~\eqref{eq:problem-formulation} consists of the iterations
\begin{align}
    x^{k+1} &= \argmin_{x} \left(f(x) + (\rho / 2)\|Mx - z^k - c + u^k\|_2^2 \right) \label{eq:x-update-admm}\\
    z^{k+1} &= \argmin_{z} \left(g(z) + (\rho / 2)\|Mx^{k+1} - z - c + u^k\|_2^2 \right) \label{eq:z-update-admm} \\
    u^{k+1} &= u^k + Mx^{k+1} - z^{k+1} - c. \label{eq:u-update-admm}
\end{align}
\method{} replaces the function $f$ in the $x$-subproblem with a second-order approximation,
\[
f(x) \approx f(x^k) + \nabla f(x^k)^T (x - x^k) + (1 / 2)\|x - x^k\|^2_{\nabla^2 f(x^k) + \sigma I}.
\]
\method{} does not require elementwise access to the matrix $\nabla^2 f(x^k) + \sigma I$, but only to matrix-vector products.
When these are expensive to compute (\eg for very large-scale problems) \method{} may approximate the regularized Hessian,
for example, by updating a Hessian estimate every few iterations.
Any value $\sigma >0$ guarantees convergence \cite[Theorem 1]{frangella2023linear}.
 
After approximating $f$, the $x$-subproblem becomes
\begin{equation}
\ifsubmit
    \begin{aligned}
    x^{k+1} = \argmin_{x} \Big(&f(x^k) + \nabla f(x^k)^T (x - x^k) + (1/ 2)\|x - x^k\|^2_{\nabla^2 f(x^k) + \sigma I}\\
    &+ (\rho / 2)\|Mx - z^k - c + u^k\|_2^2 \Big).    
    \end{aligned}
\else
    x^{k+1} = \argmin_{x} \Big(f(x^k) + \nabla f(x)^T (x - x^k) + (1/ 2)\|x - x^k\|^2_{\nabla^2 f(x^k) + \sigma I} + (\rho / 2)\|Mx - z^k - c + u^k\|_2^2 \Big).
\fi
\label{eq:x-update-nysadmm}
\end{equation}
Minimizing this unconstrained convex quadratic is equivalent to solving a linear system.
GeNIOS requires only an inexact solution $\tilde x^{k+1}$ to \eqref{eq:x-update-nysadmm}  that satisfies $\|\tilde x^{k+1} - x^{k+1}\| \leq \epsilon_x^k$.

The $z$-subproblem~\eqref{eq:z-update-admm} is unchanged,
but the subroblem solver may return any $\epsilon_z^k$-suboptimal solution $z^{k+1}$: 
denoting the true solution as $z^{k+1, \star} = \mathbf{prox}_{g/\rho}(Mx^{k+1} - c + u^k)$, the approximate solution $z^{k+1}$ must satisfy
\[
\begin{aligned}
    &g(z^{k+1}) + (\rho / 2)\|Mx^{k+1} - z^{k+1} - c + u^k\|_2^2 \\
  - &g(z^{k+1, \star}) + (\rho / 2)\|Mx^{k+1} - z^{k+1, \star} - c + u^k\|_2^2 < \eps_z^k.
\end{aligned}
\]
Convergence is guaranteed, provided that the subproblem errors $\epsilon_x^k$ and $\sqrt{\epsilon^k_z}$ are summable \cite{frangella2023linear}.

\subsection{Solving the linear system}
\label{sec:linsys}
The $x$-subproblem update after approximation~\eqref{eq:x-update-nysadmm} is an unconstrained convex QP.
Its solution solves the linear system
\begin{equation}\label{eq:x-update-lin-sys}
\ifsubmit
\begin{aligned}
    \Big(\nabla^2 f(x^k) &+ \rho M^TM + \sigma I\Big) x =\\ 
    &(\nabla^2 f(x^k) + \sigma I) x^k - \nabla f(x^k) + \rho M^T (z^k + c - u^k).
\end{aligned}
\else
    \left(\nabla^2 f(x^k) + \rho M^TM + \sigma I\right) x = (\nabla^2 f(x^k) + \sigma I) x^k - \nabla f(x^k) + \rho M^T (z^k + c - u^k).
\fi
\end{equation}
The term $\sigma I$ ensures that this system is positive definite even when $\nabla^2 f(x^k) + \rho M^TM$ is rank deficient.
\method{} targets large-scale problems by using a preconditioned conjugate gradient method (CG)~\cite{hestenes1952methods} to solve \eqref{eq:x-update-lin-sys}.
\method{} uses the CG implementation in \texttt{Krylov.jl}~\cite{montoison-orban-krylov-2020}.
This linear system is often ill-conditioned, as real-world data is generally approximately low-rank~\cite{udell2019big}.
The Nystr\"{o}m preconditioner of \cite{frangella2023randomized} improves the convergence rate in this setting.
The resulting algorithm only requires matrix-vector products with the problem data. Hence \method{} enjoys the best of both worlds: it reduces the number of outer ADMM iterations through a very accurate subproblem approximation (using the problem Hessian) but allows for fast iterations as it accesses the Hessian only through Hessian-vector products.

\paragraph{Rank deficiency.}
The left-hand-side matrix of~\eqref{eq:x-update-lin-sys} is usually full rank.
For example, $\nabla^2 f(x)$ may be the a sum of a positive semidefinite matrix and a positive diagonal matrix, 
or $M$ may include an identity block. 
In this case, the user may set $\sigma = 0$.
If the rank is not obvious, the user can estimate the minimum eigenvalue of $M^TM$ via power-iteration or the randomized Lanczos method~\cite[Alg. 5]{martinsson2020randomized}.
For a conic program (or QP), the user can also estimate the minimum eigenvalue of $\nabla^2f(x) = P$, which is constant across iterations.
These algorithms are relatively cheap to compute and can be done as part of the problem setup.
In general, however, \method{} cannot assume that the left-hand-side matrix is full-rank and must use $\sigma > 0$.

\paragraph{Inexact solves.}
The the $x$-subproblem errors $\epsilon_x^k$ must be summable for \method{} to converge.
\method{} ensures this condition by using the following relative tolerance for \eqref{eq:x-update-lin-sys}:
\[
\frac{\min\left(\sqrt{\|r_p^k\|\|r_d^k\|},\; 1.0 \right)}{k^\gamma}.
\]
The above stopping tolerance combines strategies of existing conic solvers: \texttt{SCS}~\cite{scs1} and \cosmo{}~\cite{cosmo}, which use $1/k^{\gamma}$ with $\gamma = 1.5$ by default; and \osqp{}'s GPU implementation~\cite{schubiger2020gpu}, which uses an error proportional to the geometric mean of the residuals $\sqrt{\|r_p^k\|\|r_d^k\|}$.
Intuitively, \method{} uses a looser tolerance when the residuals are large but then tightens this tolerance as the residuals decrease to hasten convergence.

\subsection{Randomized preconditioning}
\label{sec:preconditioner}
\method{} uses techniques from randomized numerical linear algebra to build a Nystr\"{o}m preconditioner~\cite{frangella2023randomized} for the $x$-subproblem linear system~\eqref{eq:x-update-lin-sys}.
The left-hand-side matrix is often ill-conditioned, resulting in slow convergence of CG.
To build a preconditioner, we want to quickly find an approximate inverse of the dominant eigenspace of this matrix, which we do by creating a random sketch that is easy to invert.
When the matrix is not-too-sparse and low rank, which is often true for real-world data (see~\cite{udell2019big} and references therein), this preconditioner provides a significant speedup over standard CG.

\paragraph{Preliminaries.}
Let $H$ denote the matrix in the $x$-subproblem linear system solve~\eqref{eq:x-update-lin-sys}:
\[
    H = \nabla^2 f(x^k)+\rho M^TM+\sigma I.
\]
In many practical settings, the optimization problem~\eqref{eq:problem-formulation} can be written such that either the Hessian or $M^TM$ is a diagonal matrix.
\method{} uses a preconditioner for matrices of the form
\begin{equation}
\label{eq:lrdiag}
A+D \qquad \text{where} \qquad A = \nabla^2f(x) \quad \text{or} \quad A = \rho M^T M
\end{equation}
and $D$ is diagonal.
Without loss of generality, we can assume that $D = \nu I$, for some $\nu > 0$ because the linear system
\[
(A+D)w = b
\]
is equivalent to the linear system
\[
\left(D^{-1/2}AD^{-1/2}+I\right)\tilde w = D^{-1/2}b,
\]
where $w = D^{-1/2}\tilde w$.
For the remainder of this section, let $D = \nu I$ and assume the linear system in~\eqref{eq:x-update-lin-sys} has the form
\begin{equation}
\label{eq:reduc-lin-sys}
(A+\nu I) x = b.
\end{equation}

\paragraph{The preconditioner.}
To precondition the linear system with left-hand-side matrix $A + \nu I$, \method{} first constructs a randomized Nystr{\"o}m approximation~\cite[Alg. 16]{martinsson2020randomized} to $A$ using test matrix $\Omega \in \reals^{n\times r}$:
\[
\hat A = (A\Omega)(\Omega^TA\Omega)^{\dagger}(A\Omega)^T = U\hat \Lambda U^{T},
\]
where $U \in \reals^{n\times r}$ has orthonormal columns, and $\hat \Lambda \in \reals^{r\times r}$ is diagonal.
\method{} uses a standard normal\footnote{
    This choice does not preserve sparsity, and other options like subsampled trigonometric transforms, may provide better performance for sparse matrices. See~\cite[\S9]{martinsson2020randomized} for discussion and references.
} test matrix $\Omega$,
as extensive theoretical and numerical work has found that this choice yields an excellent low-rank approximation~\cite{alaoui2015fast,musco2017recursive,tropp2017fixed,tropp2019streaming}.
Given the low-rank assumption holds, \method{} can take $r \ll n$ without losing much accuracy in the approximation to $A$.
Computing the sketch then approximately scales as the cost of a matrix-vector-product with $A$.
The randomized Nystr{\"o}m preconditioner is
\begin{equation}\label{eq:preconditioner}
    L^{-1} = (\hat \Lambda_{r,r}+\nu I)U\left(\hat \Lambda +\nu I\right)^{-1}U^{T}+I-UU^{T}.
\end{equation}
This preconditioner (approximately) inverts the dominant eigenspace of $A$, while leaving the orthogonal complement unaffected.
Additionally, $L^{-1}$ never needs to be formed explicitly; \method{} stores it in a factored form with a light storage footprint.
In this form, \method{} cheaply applies the preconditioner in $O(nr)$ time and updates the parameter $\nu$ without recomputing the Nystr{\"o}m approximation.

Low-rank preconditioners have been employed in other ADMM solvers, such as imsPADMM \cite{chen2017efficient}. 
However, these preconditioners are typically constructed using traditional numerical linear algebra algorithms like the Lanczos algorithm. 
Modern techniques based on randomized linear algebra \cite{martinsson2020randomized},
such as those used by \method{}, can often construct low-rank approximations much faster than the Lanczos algorithm,
although they require about the same number of floating point operations \cite{halko2011finding}.
Parallel computing drives this difference: while the matrix-vector products computed by the Lanczos algorithm must be executed sequentially,
the matrix-vector products that form the sketch $A\Omega$ can be computed in parallel \cite{frangellarla2025}.

\paragraph{Adaptive sketch size selection.}
A good preconditioner decreases the (preconditioned) condition number of the linear system~\eqref{eq:reduc-lin-sys} to a small constant.
In this case, the standard analysis of CG~\cite[\S38]{trefethen1997numerical} guarantees convergence of the PCG iterates to an $\eps$-ball of the solution
within $O\left(\log(1/\eps)\right)$ iterations.

By default \method{} uses a constant sketch size, which works well across diverse experiments.
It also implements an adaptive technique to update the sketch size
that uses the following bound on the condition number $\kappa$ of the left-hand-side matrix~\cite[Prop. 5.3]{frangella2023randomized}:
\[
    \kappa(L^{-1/2}(A+\nu I)L^{-1/2})\leq 1+\frac{\hat \Lambda_{rr} + \|E\|}{\nu} \qquad \text{where } E = A - \hat A.
\]
Starting from some small initial sketch size, \method{} can increase the size of the sketch until $\|E\|$ and $\hat \Lambda_{rr}$ are suitably small, or until the sketch size is unacceptably large.
\method{} computes $\hat \Lambda_{rr}$ as part of the Nystr\"{o}m sketch, and it can reliably estimate the error $\|E\|$ using a few iterations of the randomized power method~\cite[Alg. 4]{martinsson2020randomized}.
See~\cite[\S 5.4]{frangella2023randomized} for additional discussion.

\subsection{Convergence}
\label{sec:convergence}
\method{} is a special case of the \texttt{GeNI-ADMM} framework in ~\cite{frangella2023linear}, and so its convergence can be derived as a special case of the theory developed there.
Provided the subproblem errors $\{\epsilon_x^k\}$ and $\{\sqrt{\varepsilon_z^k}\}$ are summable, 
Theorem 1 in~\cite{frangella2023linear} guarantees the averaged iterates
\[
    \bar x^{k+1} = \frac{1}{k}\sum^{k+1}_{t=2} x^t \qquad \text{ and } \qquad \bar z^{k+1} = \frac{1}{k}\sum^{k+1}_{t=2} z^t
\]
produce objective value error and primal residual error that converge at rate $O(1/k)$, \ie
\[
f(\bar x^{k+1})+g(\bar z^{k+1})-p^{\star} = O\left(1/k\right),
\]
and
\[
\|M\bar x^{k+1}+\bar z^{k+1}-c\| = O\left(1/k\right).
\]
As standard ADMM converges at an $O(1/k)$ rate \cite{he20121,beck2017first}, the preceding theory suggests that \method{} will require approximately the same number of iterations.
However, each iteration of \method{} will be faster due to the approximation of the $x$-subproblem and inexact subproblem solves.
Thus, overall we expect \method{} to converge faster than standard ADMM. 
Our numerical experiments corroborate these expectations.

GeNIOS can also be shown to converge at a linear rate under strong convexity, similar to other ADMM variants \cite{deng2016global,tang2024self}.
When $f$ is strongly convex, \cite[Theorem 2]{frangella2023linear} guarantees linear convergence, provided that the subproblem errors decay \emph{geometrically}.
However, the geometric decay requirement appears to be an artifact of the analysis: 
empirically \cite{frangella2023linear} observes linear convergence even when the subproblem error sequences are only summable.

GeNIOS inherits these convergence guarantees:
it is guaranteed to converge linearly when $f$ is strongly convex and subproblem errors decay geometrically, 
and in practice still converges linearly even with less exact subproblem solves.
Figure \ref{fig:high-prec-solves} shows that \method{} converges linearly on the strongly convex elastic-net and logistic regression problems.

\paragraph{Optimality conditions.}
Optimality conditions for the problem~\eqref{eq:problem-formulation} are primal feasibility,
\[
Mx^\star - z^\star = c,
\]
and that the Lagrangian $\mathcal{L}(x, z, y) = f(x) + g(z) + \rho u^T(Mx - z - c)$
has a vanishing gradient when evaluated at the optimal primal and dual variables:
\[
\begin{aligned}
    0 &= \nabla f(x^\star) + \rho M^T u^\star \\
    0 &\in \partial g(z^\star) - \rho u^\star.
\end{aligned}
\]
In classic ADMM, $z^{k+1}$ and $u^{k+1}$ always satisfy the second condition above exactly~\cite[\S 3.3]{boyd2011distributed}.
\method{} only requires a routine that solves the $z$-subproblem inexactly,
so instead it finds $u^{k+1}$ so that $\rho u^{k+1}$ is almost a subgradient of $g$ at $z^{k+1}$:
\begin{lemma}[Approximate optimality condition~{\cite[Lemma 4]{frangella2023linear}}]
	At each iteration $k$, there exists $s^k$ with 
 \[
 \|s^k\|\leq \sqrt{\frac{2\eps^k_z}{\rho}},
 \]
 such that the approximate $z$-subproblem solution $z^{k+1}$ satisfies
    \[
    \rho u^{k+1}+s^k\in \partial_{\eps_z^k}g(z^{k+1}).
    \]
    In words, $\rho u^{k+1}+s^k$ belongs to the \emph{$\eps^k_z$-subdifferential} at $z^{k+1}$,
    which means
    \[
    g(z)-g(z^{k+1})\geq \langle \rho u^{k+1}+s^k,z-z^{k+1}\rangle-\eps_z^k, \quad \forall z\in \reals^m.
    \]
\end{lemma}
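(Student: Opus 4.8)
The plan is to compare the inexact $z$-update to the exact proximal step it approximates, and to convert the \emph{objective-value} gap $\eps_z^k$ into a subgradient-type statement at $z^{k+1}$. Write $v = Mx^{k+1} - c + u^k$, so that the $z$-subproblem objective is $\phi(z) = g(z) + (\rho/2)\|z - v\|_2^2$, its exact minimizer is $z^{k+1,\star} = \prox_{g/\rho}(v)$, and the dual update gives $\rho u^{k+1} = \rho(v - z^{k+1})$. The first-order optimality condition of the prox is $\rho(v - z^{k+1,\star}) \in \partial g(z^{k+1,\star})$, so the vector $p := \rho(v - z^{k+1,\star})$ is a \emph{genuine} subgradient of $g$, but at the exact point $z^{k+1,\star}$ rather than at $z^{k+1}$. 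I would therefore define $s^k$ by $\rho u^{k+1} + s^k = p$, i.e.\ $s^k = \rho(z^{k+1} - z^{k+1,\star})$, so the candidate element of the $\eps_z^k$-subdifferential is precisely this honest subgradient $p$, and the whole task reduces to quantifying how much accuracy is lost by evaluating it at $z^{k+1}$.

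The crux is a single exact identity relating the objective gap to a Bregman-type residual. Expanding $\eps_z^k = \phi(z^{k+1}) - \phi(z^{k+1,\star})$ and using the exact (second-order) expansion of the quadratic $(\rho/2)\|\cdot - v\|_2^2$ about $z^{k+1,\star}$, the cross term reorganizes into $\langle p,\, z^{k+1} - z^{k+1,\star}\rangle$ and the curvature term into $(\rho/2)\|z^{k+1} - z^{k+1,\star}\|_2^2$, yielding
\[
g(z^{k+1}) - g(z^{k+1,\star}) - \langle p,\, z^{k+1} - z^{k+1,\star}\rangle = \eps_z^k - \tfrac{\rho}{2}\|z^{k+1} - z^{k+1,\star}\|_2^2.
\]
The left-hand side is a Bregman divergence of the convex function $g$ with respect to the subgradient $p$ at $z^{k+1,\star}$, hence nonnegative; this immediately gives $\tfrac{\rho}{2}\|z^{k+1} - z^{k+1,\star}\|_2^2 \le \eps_z^k$, i.e.\ the displacement bound $\|z^{k+1} - z^{k+1,\star}\| \le \sqrt{2\eps_z^k/\rho}$. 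Since $s^k$ is a fixed multiple of this displacement, the bound on $\|s^k\|$ follows directly (the constant is pinned down by the $\rho$-strong convexity of $\phi$).

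For the membership claim I would start from the exact subgradient inequality $g(z) - g(z^{k+1,\star}) \ge \langle p,\, z - z^{k+1,\star}\rangle$, valid for all $z$, and re-center it at $z^{k+1}$. Writing $g(z) - g(z^{k+1}) = \big[g(z) - g(z^{k+1,\star})\big] - \big[g(z^{k+1}) - g(z^{k+1,\star})\big]$ and splitting $\langle p,\, z - z^{k+1,\star}\rangle = \langle p,\, z - z^{k+1}\rangle + \langle p,\, z^{k+1} - z^{k+1,\star}\rangle$, the terms anchored at $z^{k+1,\star}$ collect into exactly the left-hand side of the identity above, which is at most $\eps_z^k$. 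This produces $g(z) - g(z^{k+1}) \ge \langle \rho u^{k+1} + s^k,\, z - z^{k+1}\rangle - \eps_z^k$ for all $z \in \reals^m$, which is by definition the statement $\rho u^{k+1} + s^k \in \partial_{\eps_z^k} g(z^{k+1})$.

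The main obstacle is the bookkeeping in the key identity: because $g$ is only assumed convex, proper, and lower-semicontinuous (not differentiable), I cannot linearize $g$ itself and must route everything through the one genuine subgradient $p$ at $z^{k+1,\star}$; what makes this possible is that the \emph{quadratic} part of $\phi$ admits an exact expansion, so the objective gap $\eps_z^k$ decomposes cleanly into the Bregman residual plus the curvature term $(\rho/2)\|z^{k+1}-z^{k+1,\star}\|_2^2$. Getting the signs and the $\rho$-scaling right in this decomposition is the only delicate point; once the identity is in hand, both the norm bound (from nonnegativity of the residual) and the $\eps$-subdifferential inequality (from the residual being $\le \eps_z^k$) drop out simultaneously.
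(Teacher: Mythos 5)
Your construction is the standard one and is essentially the proof of the cited result: compare $z^{k+1}$ to $z^{k+1,\star}=\prox_{g/\rho}(v)$ with $v=Mx^{k+1}-c+u^k$, route everything through the genuine subgradient $p=\rho(v-z^{k+1,\star})\in\partial g(z^{k+1,\star})$, and use the exact expansion of the quadratic to split $\eps_z^k$ into a nonnegative Bregman residual plus $(\rho/2)\|z^{k+1}-z^{k+1,\star}\|_2^2$. Your key identity is correct, the displacement bound $\|z^{k+1}-z^{k+1,\star}\|\le\sqrt{2\eps_z^k/\rho}$ is correct, and the re-centering argument for the $\eps_z^k$-subdifferential membership is airtight: with $s^k=\rho(z^{k+1}-z^{k+1,\star})$ one gets exactly $g(z)-g(z^{k+1})\ge\langle \rho u^{k+1}+s^k,\,z-z^{k+1}\rangle-\eps_z^k$ for all $z$.

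The one step you wave through --- ``the bound on $\|s^k\|$ follows directly (the constant is pinned down by the $\rho$-strong convexity)'' --- is precisely where the numbers do not come out as stated. Since $s^k=\rho(z^{k+1}-z^{k+1,\star})$, the displacement bound gives $\|s^k\|\le\rho\sqrt{2\eps_z^k/\rho}=\sqrt{2\rho\,\eps_z^k}$, not $\sqrt{2\eps_z^k/\rho}$; the two differ by a factor of $\rho$. Moreover no cleverer choice of $s^k$ can rescue the stated constant: taking $g\equiv 0$, the $\eps_z^k$-subdifferential is $\{0\}$, which forces $s^k=-\rho u^{k+1}=\rho(z^{k+1}-v)$, and $\|s^k\|$ can be as large as $\sqrt{2\rho\,\eps_z^k}$. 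So under this paper's normalization of the $z$-subproblem (quadratic weighted by $\rho/2$, error measured in objective value of that subproblem), the correct bound is $\sqrt{2\rho\,\eps_z^k}$; the printed $\sqrt{2\eps_z^k/\rho}$ matches the convention in which the quadratic carries weight $1/(2\rho)$. Your proof, carried out carefully, establishes the (correct) $\sqrt{2\rho\,\eps_z^k}$ version --- you should state that constant explicitly rather than asserting agreement with the lemma as printed. This rescaling does not affect how the lemma is used downstream (summability of $\sqrt{\eps_z^k}$ still forces $s^k\to 0$).
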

The lemma shows that at each iteration, $\rho u^{k+1}$ is nearly a subgradient of $g$ at $z^{k+1}$.
The user-provided routine to solve the $z$ subproblem must guarantee the sequence of errors $\{\sqrt{\eps^k_z}\}$ is summable.
Consequently, $\eps^k_z$ decays quickly to zero, and the error in satisfying the second optimality condition becomes negligible.
Hence \method{} only monitors the error in the first condition, $\nabla f(x^{k}) + \rho M^{T}u^{k}$, to determine when it should terminate.

\paragraph{Termination criteria.}
Based on the above discussion, we define the primal and dual residuals of problem~\eqref{eq:problem-formulation} as
\begin{align}
    \label{eq:residual-primal}
    r_\mathrm{p}^k &= Mx^k - z^k - c, \\
    \label{eq:residual-dual}
    r_\mathrm{d}^{k} &= \nabla f(x^k) + \rho M^T u^k.
\end{align}
Under the inexactness assumptions above, these residuals converge to $0$ as $k \to \infty$.
\method{} terminates when an absolute and relative criterion based on these residuals are satisfied:
\[
\begin{aligned}
    \|r_\mathrm{p}^k\|_2 &\le \sqrt{m} \eps_\mathrm{abs} + \eps_\mathrm{rel} \max\{\|Mx^k\|_2, \|z^k\|_2, \|c\|_2 \}, \\
    \|r_\mathrm{d}^k\|_2 &\le \sqrt{n} \eps_\mathrm{abs} + \eps_\mathrm{rel} \|\rho M^T u^k\|_2,
\end{aligned}
\]
In some cases, the user may wish to use another convergence criterion, which \method{} supports.
For example, in machine learning problems, \method{} supports a duality gap criterion
(see \S \ref{sec:app-ml} for details).

\subsection{Infeasibility detection}
\label{sec:infeasibility}
\method{}'s iterates will diverge if a solution does not exist to~\eqref{eq:problem-formulation}.
For conic programs~\eqref{eq:conic-program-formulation}, \method{} can detect and certify infeasiblity despite inexact solves.
Our result follows~\cite{banjac2019infeasibility} and is a direct consequence of~\cite[Thm. 2.1]{rontsis2022efficient}.
For conic programs~\eqref{eq:conic-program-formulation}, the $x$-subproblem finds an $\epsilon^k_x$-approximate solution to the linear system
\[
(P + \sigma I + \rho M^TM)x = \sigma x^k + \rho M^T(z^k-u^k+c) - q.
\]
The $z$-subproblem produces an approximate projection $z^{k+1}$ of $Mx^{k+1} + u^k - c$ onto the cone $\mathcal{K}$ satisfying
\[
\|\tilde z^{k+1}-(Mx^{k+1} + u^k - c)\|^2 - \|\Pi_{\mathcal K}(Mx^{k+1} + u^k - c)-(Mx^{k+1} + u^k - c)\|^2\leq \varepsilon_z^k,
\]
absorbing the constant $\rho/2$ into the error $\eps_z^k$.
\method{} detects infeasibility by monitoring the sequences of differences $\delta x^k = x^{k+1}-x^k$ and $\delta u^k = u^{k+1}-u^k$.

\begin{proposition}[Infeasibility certificate]
\label{thm:infeasibility}
    If the error sequences $\{\epsilon^k_x\}_k$ and $\{\sqrt{\epsilon^k_z}\}_k$ are summable, then as $k \to \infty$, the differences $\delta x^k \to \delta x$ and $\delta u^k \to \delta u$ converge.
    Further,
    \begin{enumerate}
        \item If $\delta u \neq 0$, then \eqref{eq:conic-program-formulation} is primal infeasible.
        The difference $\delta u$ provides a certificate of primal infeasibility that satisfies
        \begin{equation}
        \label{eq:primal_cert}
           M^{T}\delta u = 0 \quad \text{and} \quad S_{\mathcal K}(\delta u)<0,
        \end{equation}
        where $S_{\mathcal K}$ is the support function of $\mathcal K$.\footnote{
        The support function of $\mathcal{K}$ is defined as $S_{\mathcal K}(\delta u) = \sup_{y \in \mathcal K} y^T\delta u$.
        }
        \item If $\delta x \neq 0$, then \eqref{eq:conic-program-formulation} is dual infeasible.
        The difference $\delta x$ provides a certificate of dual infeasibility that satisfies
        \begin{equation}
        \label{eq:dual_cert}
           P\delta x = 0, \quad M\delta x \in \mathcal K^{\infty}, \quad \text{and} \quad q^{T}\delta x<0,
        \end{equation}
	where $\mathcal K^{\infty}$ is the recession cone of $\mathcal K$.\footnote{
        The recession cone of $\mathcal{K}$ is defined as $\mathcal K^{\infty} = \{y \in \reals^n \mid x + \tau y \in \mathcal{K} ~ \text{for all} ~ x\in\mathcal{K},\, \tau \ge 0\}$.
    }

        \item If $\delta x \neq 0$ and $\delta u \neq 0$, then \eqref{eq:conic-program-formulation} is both primal and dual infeasible, and the differences $\delta x$ and $\delta u$ provide certificates of infeasibility as above.
    \end{enumerate}
\end{proposition}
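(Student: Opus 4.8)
The plan is to realize the conic iteration as a perturbed averaged fixed-point iteration and invoke the inexact infeasibility theory of \cite{banjac2019infeasibility,rontsis2022efficient}, so that the claim reduces to \cite[Thm. 2.1]{rontsis2022efficient}. First I would write the three updates (the inexact $x$-linear-solve, the inexact projection $z$-update, and the $u$-update) as a single iteration $w^{k+1}=T(w^k)+e^k$ in the variable $w=(x,u)$, where $T$ is the firmly nonexpansive Douglas--Rachford-type operator underlying ADMM on \eqref{eq:conic-program-formulation} and $e^k$ collects the subproblem inexactness. I would then verify that the hypotheses of the Proposition --- $\{\eps_x^k\}$ summable and $\{\sqrt{\eps_z^k}\}$ summable, as in \eqref{eq:errors-condition} --- yield $\sum_k\|e^k\|<\infty$: the $x$-error contributes $\eps_x^k$ directly, while the inexact projection displaces the $z$-block by a multiple of $\sqrt{\eps_z^k}$ (the same $\sqrt{2\eps_z^k/\rho}$ bound appearing in the approximate-optimality Lemma). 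This summability is exactly the input required by the asymptotic theory for inexact averaged operators.

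With $T$ firmly nonexpansive and $\sum_k\|e^k\|<\infty$, the perturbed iteration stays asymptotically regular even when $T$ has no fixed point, and the increments $w^{k+1}-w^k$ converge to the minimal displacement vector $v=\Pi_{\overline{\mathrm{ran}}(T-I)}(0)$. Reading off the two blocks gives $\delta x^k\to\delta x$ and $\delta u^k\to\delta u$, which is the first assertion. This is the one place where inexactness is genuinely used, and it is precisely the content of \cite[Thm. 2.1]{rontsis2022efficient}: summable perturbations do not change the limit of the iterate differences.

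Next I would pass to the limit in the subproblem optimality conditions to extract \eqref{eq:primal_cert} and \eqref{eq:dual_cert}. Differencing the $x$-subproblem system \eqref{eq:x-update-lin-sys} between consecutive iterations together with the $u$-update, and letting $k\to\infty$ so that the summable error terms vanish, yields the equality parts $P\,\delta x=0$ and $M^T\delta u=0$ along with $\delta z=M\,\delta x$. The variational-inequality (normal-cone) form of the inexact projection, $u^{k+1}\in N_{\mathcal K}(z^{k+1})$, passes to the limit to place $\delta u$ in the polar cone $\mathcal K^{\circ}$ --- the support-function part of \eqref{eq:primal_cert} --- and to place $M\delta x$ in the recession cone $\mathcal K^{\infty}$, as in \eqref{eq:dual_cert}. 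A conic theorem of the alternative (conic Farkas/Gordan) then shows that a nonzero $\delta u$ certifies primal infeasibility of \eqref{eq:conic-program-formulation} and a nonzero $\delta x$ certifies dual infeasibility; case (3) is the conjunction of the two.

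I expect two steps to be delicate. The main obstacle is the inexactness bookkeeping behind the first two paragraphs: showing that summable subproblem errors leave the limit of the differences unchanged, which is exactly why the conditions \eqref{eq:errors-condition} are imposed and which I would import wholesale from \cite[Thm. 2.1]{rontsis2022efficient}. The second subtlety is the strict inequalities $S_{\mathcal K}(\delta u)<0$ and $q^T\delta x<0$: these cannot come from differencing alone, since the constants $c$ and $q$ are annihilated by taking differences, so I would recover them by identifying the limit with the nonzero minimal displacement vector $v$ and using the firm-nonexpansiveness identities that tie $\langle c,\delta u\rangle$ and $q^T\delta x$ to $\|v\|^2>0$, exactly as in \cite{banjac2019infeasibility} but tracked through the summable-error estimates above.
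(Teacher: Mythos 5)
Your proposal is correct and follows essentially the same route as the paper: convert the function-value inexactness $\eps_z^k$ into an iterate-distance bound $\sqrt{2\eps_z^k/\rho}$ via $\rho$-strong convexity of the $z$-subproblem, observe that both subproblem solution errors are then summable by \eqref{eq:errors-condition}, and invoke \cite[Thm.~2.1]{rontsis2022efficient}. The only difference is that you also sketch the internals of that cited theorem (fixed-point perturbation analysis, minimal displacement vector, passage to the limit in the optimality conditions), which the paper simply treats as a black box.
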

\begin{proof}
    $\rho$-Strong-convexity of the $z$-subproblem, along with the $z$-subproblem inexactness condition implies that
    \[
    \|z^{k+1}-\Pi_{\mathcal K}(Mx^{k+1} + u^k - c)\|\leq \sqrt{\frac{2}{\rho}\epsilon_z^k}.
    \]
    As $\sum_k \sqrt{\eps_z^k}<\infty$ by the construction of the \method{} algorithm, it follows that the errors in the solution to the $z$-subproblem are summable.
    The same holds for the $x$-subproblem errors, again by the construction of the \method{} algorithm.
    The desired result then follows immediately from Theorem 2.1 in \cite{rontsis2022efficient}, which proves the result in the case where the $x$ and $z$-subproblem solution errors are summable.
\end{proof}

\paragraph{Algorithmic certificates.}
Following~\cite{banjac2019infeasibility,rontsis2022efficient,osqp}, we translate Proposition~\ref{thm:infeasibility} into simple algorithmic certificates of infeasibility.
\method{} declares a problem to be primal infeasible if the primal infeasibility certificate~\eqref{eq:primal_cert} holds approximately:
\[
\|M^{T}\delta u^k\| < \epsilon_{\textrm{inf}} \|\delta u^k\|, \quad S_{\mathcal K}(\delta u^k)<\epsilon_{\textrm{inf}}\|\delta u^k\|,
\]
where $\epsilon_{\textrm{inf}}$ is a positive tolerance.
Similarly, \method{} declares a problem to be dual infeasible if the dual infeasibility certificate~\eqref{eq:dual_cert} holds approximately:
\[
\|P\delta x^k\|<\epsilon_{\textrm{inf}} \|\delta x^k\|, \quad \textrm{dist}_{\mathcal K^{\infty}}(M\delta x^k)<\epsilon_{\textrm{inf}}\|\delta x^k\|, \quad q^{T}\delta x^k<\epsilon_{\textrm{inf}} \|\delta x^k\|.
\]
For the case of QPs, the support functions and recession cone for the hyperrectangle $[l, u] \subseteq \reals^n$ are well-defined, even though this set is not necessarily a cone.

\subsection{Performance improvements} \label{sec:performance-imp}
\method{} includes performance improvements which are known to speed up convergence in practice and are implemented in many ADMM solvers.

\paragraph{Over-relaxation.}
In the $z$- and $u$-updates, \method{} replaces the quantity $Mx^{k+1}$ with
\[
\alpha Mx^{k+1} + (1 - \alpha)(z^k + c),
\]
where $\alpha \in (0, 2)$ is a relaxation parameter.
Experiments in the literature~\cite{eckstein1994parallel,eckstein1998operator} show empirically that $\alpha$ in the range  $[1.5, 1.8]$ can improve convergence.

\paragraph{Adjusting the penalty parameter.}
First-order algorithms, like ADMM, are sensitive to scaling of the problem data and to the penalty parameter $\rho$.
\method{} uses preconditioning to moderate the impact of the problem data scaling and selects $\rho$ using the simple rule~\cite{he2000alternating,wang2001decomposition,boyd2011distributed}
\[
\rho^{k+1} = \begin{cases}
    \tau \rho^k & \|r^k_\mathrm{p}\| > \mu \|r^k_\mathrm{d}\| \\
    \rho^k/\tau & \|r^k_\mathrm{d}\| > \mu \|r^k_\mathrm{p}\| \\
    \rho^k & \mathrm{otherwise}.
\end{cases}
\]
Since \method{} uses an indirect method, these updates can be applied cheaply; \method{} does not need to refactor a matrix.
\method{} can also update the preconditioner by simply changing a scalar parameter (see~\S\ref{sec:preconditioner}).
In practice, we find that the update only makes sense to apply every 25 iterations or so (this parameter is adjustable by the user).
In the scaled version of ADMM, the (scaled) dual variable $u^k$ must also be updated when $\rho$ changes.
These penalty parameter updates must stop after some finite number of iterations for convergence guarantees to hold.


\section{Applications}
\label{sec:apps}
In this section, we detail the three problem classes \method{} handles and provide an example of the interface for each class: 
general convex programs (as in~\eqref{eq:problem-formulation}) with the \genericsolver{}, quadratic programs with the \qpsolver{}, and regularized machine learning problems with the \mlsolver{}.

\subsection{General convex programs}
Recall that \method{} solves convex optimization problems of the form
\[
    \begin{aligned}
        &\text{minimize}     && f(x) + g(z) \\
        &\text{subject to}   && Mx - z = c,
    \end{aligned}
\]
where the variables are $x \in \reals^n$ and $z \in \reals^m$, and the problem data are the functions $f$ and $g$, the linear operator $M$, and the vector $c$.
\method{} uses multiple dispatch in the Julia programming language~\cite{bezanson2017julia} to implement fast primitives for inexact ADMM.
The base \method{} implementation uses a fully generic interface, accessible through \method{}'s \genericsolver{}, which we describe in this section.
In the subsequent sections, we will detail how this interface is specialized for quadratic programs and machine learning problems, each with its own interface.
Multiple dispatch allows \method{} to optimize performance for these problem subclasses while using the infrastructure of the \genericsolver{}.

\method{}'s \genericsolver{} uses these ingredients to define an optimization problem:
\begin{itemize}
    \item The function $f: \reals^n \to \reals$, its gradient $\nabla f: \reals^n \to \reals^n$, and a Hessian-vector product (HVP) oracle $\mathcal O_H:\mathbb \reals^{n}\times \reals^{n} \rightarrow \reals^n$, such that $\mathcal O_H(v;x) = \nabla^2 f(x)v$.
    \item The function $g: \reals^m \to \reals \cup \{+\infty\}$ and its (approximate) proximal operator, $\prox_{g/\rho}: \reals^m \to \reals^m$.
    \item The linear operator $M: \reals^n \to \reals^m$ and the vector $c \in \reals^m$.
\end{itemize}
Efficient implementations of the HVP for the $x$-subproblem 
and of the proximal operator for the $z$-subproblem
improve the performance of \method{} substantially
over a basic implementation. 
We provide several examples of these more efficient implementations here, 
and defer more (including a GPU interface\footnote{\url{https://tjdiamandis.github.io/GeNIOS.jl/dev/gpu/\#GPU-Support}}) to the package documentation.

\paragraph{Example.}
The Lasso regression problem is
\begin{equation}\label{eq:ex-lasso}
\begin{aligned}
        &\text{minimize} && (1/2)\|Ax - b\|^2_2 + \lambda \|x\|_1,
    \end{aligned}
\end{equation}
with variable $x \in \reals^n$, and problem data $A \in \reals^{N \times n}$, $b \in \reals^N$, and $\lambda \in \reals_+$.
In the form of~\eqref{eq:problem-formulation}, this problem becomes
\[
    \begin{aligned}
        &\text{minimize} && (1/2)\|Ax - b\|^2_2 + \lambda \|z\|_1 \\
        &\text{subject to} && x - z = 0,
    \end{aligned}
\]
where we have introduced a new variable $z \in \reals^n$.
To put this problem into the general form, take
\[
f(x) = (1/2)\|Ax - b\|^2_2 \qquad \text{and} \qquad g(z) = \lambda \|z\|_1
\]
with $M = I$ and $c = 0$.
The gradient of $f$ is $\nabla f(x) = A^T(Ax - b)$,
and its HVP is $\nabla^2 f(x): z \mapsto A^TA z$.
(The Hessian is constant, so the HVP is independent of the current iterate $x$.)
The Lasso is typically used when $A \in \reals^{N \times n}$ with $N \ll n$,
so the Hessian is more efficiently computed with two matrix-vector products as $(A^T (Az))$ instead of forming the $n \times n$ matrix $A^TA$.
The proximal operator of $g$ is the soft-thresholding operator,
\[
\prox_{g/\rho}(v)_i = \left(\argmin_{\tilde z} \lambda \|x\|_1 + (\rho/2)\|\tilde z - v\|_2^2\right)_i = \begin{cases}
v_i - \lambda / \rho & v_i > \lambda / \rho \\
0 & \abs{v_i} \le \lambda / \rho \\
v_i + \lambda / \rho & v_i < - \lambda / \rho.
\end{cases}
\]

\paragraph{Code example.}
To use \method{}'s \genericsolver{}, the user first defines the function $f$, its gradient, the function $g$, and its proximal operator directly in the Julia language.\footnote{
    This is not optimized code!
    Performance in the examples appearing in~\S\ref{sec:apps} has been sacrificed for the sake of clarity.
    For performant examples, please see the \method{} documentation and the code for the experiments in~\S\ref{sec:experiments}.
}

\begin{minted}[bgcolor=LightGray]{julia}
# Assume A, b, N, n, lambda have been defined
p = (; A=A, b=b, lambda=lambda)

f(x, p) = 0.5 * norm(p.A*x .- p.b)^2
function grad_f!(g, x, p)
    g .= p.A'*(p.A*x .- p.b)
end

g(z, p) = p.lambda*sum(abs, z)
soft_thresh(zi, kappa) = sign(zi) * max(0.0, abs(zi) - kappa)
function prox_g!(v, z, rho, p)
    v .= soft_thresh.(z, p.lambda/rho)
end
\end{minted}
Julia language style guidelines use an exclamation point \texttt{!} at the end of function names that modify their arguments;
here, the gradient and proximal functions both modify their first argument to avoid allocating memory.
The dot \texttt{.} next to a scalar function \emph{broadcasts} this function to act elementwise over vector arguments.
To access problem data inside the functions, we define the named tuple \texttt{p}, which is passed to each function as the last argument and will be later used to construct the solver.

To define the HVP, the user implements \method{}'s \texttt{HessianOperator} type:
\begin{minted}[bgcolor=LightGray]{julia}
struct HessianLasso{T, S<:AbstractMatrix{T}} <: HessianOperator
    A::S
    vN::Vector{T}
end
function LinearAlgebra.mul!(y, H::HessianLasso, x)
    mul!(H.vN, H.A, x)
    mul!(y, H.A', H.vN)
end
update!(::HessianLasso, ::Solver) = nothing

Hf = HessianLasso(A, zeros(N))
\end{minted}
The \texttt{update!} function updates the internal data of the \texttt{HessianOperator} object based on the current iterate.
Here, the Hessian is independent of the current iterate $x^k$, so the \texttt{update!} function does \texttt{nothing}.
The HVP is implemented with two multiplications: one by $A$, and one by $A^T$,
using a cached vector \texttt{vN} to avoid additional allocation of memory.
(Other functions do the same in our code, but [for brevity] not in this paper.)
Finally, the user defines a \genericsolver{} by combining these ingredients and \texttt{solve!}s the problem.
\begin{minted}[bgcolor=LightGray]{julia}
solver = GeNIOS.GenericSolver(
    f, grad_f!, Hf,         # f(x)
    g, prox_g!,             # g(z)
    I, zeros(n);            # M, c: Mx - z = c
    params=p
)
res = solve!(solver)
\end{minted}
Here, $M$ need not be a concrete matrix but can be any linear operator (with a \texttt{mul!} method) on a vector.
For example, Julia represents the identity matrix $I$ as a special \texttt{UniformScaling} type to efficiently dispatch linear algebra subroutines at compile time.
These routines recognize that $I$ is the identity operator and can be computed in $O(1)$ time.

In the rest of this section, we show how to specialize this solver for certain problem subclasses,
deferring problem-specific performance improvements to~\S\ref{sec:experiments}.

\subsection{Quadratic Programs}
\method{}'s \qpsolver{} solves constrained QPs of the form
\begin{equation}\label{eq:problem-formulation-qp}
    \begin{aligned}
        &\text{minimize}     && (1/2)x^TPx + q^Tx \\
        &\text{subject to}   && Mx = z \\
                            &&& l \leq z \leq u,
    \end{aligned}
\end{equation}
where $x \in \reals^n$ and $z \in \reals^m$ are variables,
and $P \in \symm_+^n$, $q \in \reals^n$, $M \in \reals^{m \times n}$, $l \in \reals^m$, and $u \in \reals^m$ are the problem data.
This problem can be cast in the form~\eqref{eq:problem-formulation} by taking $f(x) = (1/2)x^TPx + q^Tx$, $g(z) = I_{[l, u]}(z)$, and $c = 0$:
\[
    \begin{aligned}
        &\text{minimize}     && (1/2)x^TPx + q^Tx + I_{[l, u]}(z)\\
        &\text{subject to}   && Mx - z = 0.
    \end{aligned}
\]

\paragraph{ADMM subproblems.}
In the quadratic program case, the $x$-subproblem requires the (approximate) solution to the linear system
\[
(P + \rho M^T M + \sigma I)x = \sigma x^k - q + \rho M^T(z^k + c - u^k).
\]
Because $f$ is quadratic, the second-order approximation is exact when $\sigma = 0$.
The $z$-subproblem is simply a projection onto a hyperrectangle defined by $l$ and $u$, which can be computed exactly in $O(m)$ time.
Thus, the main computational bottleneck at each iteration is the solution to the linear system defining the $x$-subproblem.

\paragraph{Solving the linear system.}
By default, \method{} sketches $P$ and sets the regularization parameter in the preconditioner to be $\rho$. This approach works well when $P$ contains the problem data.
In these formulations, $M$ typically includes an identity matrix block, and therefore $M^TM \succeq I$.
If the problem data instead appear in the constraints, \method{} could  divide the system by $\rho$ and then sketch $M^TM$, but this feature is not implemented as of this writing.

\paragraph{Performance improvements.}
The unique features of \method{} and the Julia programming language allow for several performance optimizations.
Often, the matrix $P$ has a more efficient form for both storage and computation.
For example, $P$ may be the sum of a diagonal and low rank component, \ie $P = D + FF^T$,
where $D$ is diagonal and $F \in \reals^{n \times k}$ with $k \ll n$.
In this case, the user can create a custom object which stores $P$ using only $O(nk)$ numbers and computes a matrix-vector product in $O(nk)$ time.

\paragraph{Interface.}
\method{} includes a simple interface \qpsolver{} to define QPs more directly than through the \genericsolver{} interface: the user must specify $P$, $q$, $M$, $l$, and $u$ in~\eqref{eq:problem-formulation-qp}.
Alternatively, the user can access this interface by forming a QP in the JuMP modeling language~\cite{jump}.
The linear operators $P$ and $M$ can be any linear operator (with a \texttt{mul!} method) on a vector and can exploit problem structure; see in~\S\ref{sec:ex-portfolio-opt} for an example.

\paragraph{Code example.}
Reformulate the lasso problem~\eqref{eq:ex-lasso} as a QP (\cf\eqref{eq:problem-formulation-qp}):
\[
\begin{aligned}
&\text{minimize}     && (1/2)\bmat{x\\t}^T \bmat{A^TA & \\ & 0}\bmat{x\\t} + \bmat{-A^Tb \\ \lambda \ones}^T\bmat{x\\t} \\
&\text{subject to}   &&
\begin{bmatrix}0 \\ -\infty\end{bmatrix}
\leq \begin{bmatrix} I & I \\ I & -I \end{bmatrix} \begin{bmatrix}x \\ t\end{bmatrix}
\leq \begin{bmatrix}\infty \\ 0\end{bmatrix}.
\end{aligned}
\]
From this formulation, identify $P$, $q$, $M$, $l$, and $u$.
The code to solve this QP is below.

\begin{minted}[bgcolor=LightGray]{julia}
# Assume that A, b, m, n, lambda have all been defined
P = blockdiag(sparse(A'*A), spzeros(n, n))
q = vcat(-A'*b, lambda*ones(n))
M = [
    sparse(I, n, n)     sparse(I, n, n);
    sparse(I, n, n)     -sparse(I, n, n)
]
l = [zeros(n); -Inf*ones(n)]
u = [Inf*ones(n); zeros(n)]
solver = GeNIOS.QPSolver(P, q, M, l, u)
res = solve!(solver)
\end{minted}

\subsection{Machine Learning}
\label{sec:app-ml}
\method{}'s \mlsolver{} solves convex machine learning problem of the form
\begin{equation}\label{eq:problem-formlation-ml}
\begin{aligned}
&\text{minimize} && \sum_{i=1}^N \ell(a_i^Tx - b_i) + \lambda_1 \|x\|_1 + (1/2)\lambda_2 \|x\|_2^2,
\end{aligned}
\end{equation}
where $\ell: \reals \to \reals_+$ is some convex, per-sample loss function, and $\lambda_1$, $\lambda_2 \in \reals_+$ are regularization parameters.
The variable $x \in \reals^n$ is often called the (learned) model weights,
$a_i \in \reals^n$ the feature vectors, and $b_i \in \reals$ the responses for $i = 1, \dots, N$.
Put this problem in the general form~\eqref{eq:problem-formulation} by setting $f(x) = \sum_{i=1}^N \ell(a_i^Tx - b_i) + (1/2)\lambda_2\|x\|_2^2$ and $g(z) = \lambda_1 \|z\|_1$ with the constraint $x = z$.

\paragraph{Defining the problem.}
The problem~\eqref{eq:problem-formlation-ml} is defined by the per-sample loss function $\ell$, the feature matrix $A \in \reals^{N \times n}$ with rows $a_i^T$ for $i = 1, \dots, N$, the response vector $b \in \reals^N$, and the regularization parameters $\lambda_1$, $\lambda_2 \in \reals_+$.
The gradient and Hessian of $f(x)$ are
\[
\nabla f(x) = A^T \ell'(Ax - b) + \lambda_2 x \quad \text{and} \quad \nabla^2 f(x) = A^T \diag{\ell''(Ax - b)} A + \lambda_2 I,
\]
where $\ell'$ and $\ell''$ are applied elementwise.

\paragraph{Solving the linear system.}
Since $M = I$, the linear system associated with this problem~\eqref{eq:x-update-lin-sys} is always positive definite: at iteration $k$, \method{} solves the system
\[
\ifsubmit
    \begin{aligned}
    \left(A^T \diag{{\ell''}^k} A + (\lambda_2 + \rho) I\right)x = &\left(A^T \diag{{\ell''}^k} A + \lambda_2I\right) x^k \\
                                                                   &- A^T {\ell'}^k - \lambda_2 x^k + \rho(z^k + c - u^k),
    \end{aligned}
\else
    \left(A^T \diag{{\ell''}^k} A + (\lambda_2 + \rho) I\right)x = \left(A^T \diag{{\ell''}^k} A + \lambda_2I\right) x^k - A^T {\ell'}^k - \lambda_2 x^k - \rho(z^k - c + u^k),
\fi
\]
where ${\ell'}^k$, ${\ell''}^k \in \reals^N$ are shorthand for $\ell'$ and $\ell''$ applied elementwise to the vector $Ax^k - b$.
To construct the preconditioner, \method{} sketches $A^T \diag{{\ell''}^k} A$ and adds a regularization parameter of $\rho + \lambda_2$.
For some problems, including lasso regression, $\ell''$ is constant, and \method{} only need to sketch the left-hand-side matrix once.
However, for others, including logistic regression (see~\S\ref{sec:ex-logistic}), \method{} re-sketches the matrix occasionally as the weights change.

\paragraph{Custom convergence criterion.}
For machine learning problems, \method{} uses a
bound on the duality gap of~\eqref{eq:problem-formlation-ml} to determine convergence.
The solver constructs a dual feasible point $\nu$ for the dual function of an equivalent reformulation of~\eqref{eq:problem-formlation-ml} and then uses the termination criterion
\[
    \frac{\ell(x) - g(\nu)}{\min\left(\ell(x), \lvert{g(\nu)}\rvert\right)} \le \eps_\mathrm{dual}.
\]
We call the quantity on the left the \emph{relative duality gap}, as it is clearly a bound for the same quantity evaluated at the optimal dual variable $\nu^\star$. For a full derivation, see appendix~\ref{appendix:dual-gap}.
\method{} also allows the user to specify other custom criteria, such as the relative change in the loss function or the norm of the gradient.

\paragraph{Interface.}
To use \method{}'s machine learning interface \mlsolver{}, the user defines the per-sample loss function $\ell:\reals \to \reals_+$ and the nonnegative regularization parameters $\lambda_1$ and $\lambda_2$, as well as the data matrix $A$ and response vector $b$.
\method{} can use forward-mode automatic differentiation (using \texttt{ForwardDiff.jl}~\cite{RevelsLubinPapamarkou2016}) to define the first and second derivatives of $\ell$, or these can also be supplied directly by the user.
\method{} defaults to using the relative duality gap convergence criterion for lasso, elastic net, and logistic regression problems, which have their own interfaces.
The user must provide the conjugate function of the per-sample loss, $\ell^*$, to compute the duality gap for a custom loss $\ell$.

\paragraph{Code example.}
The lasso problem~\eqref{eq:ex-lasso} is easily recognized as a machine learning problem~\eqref{eq:problem-formlation-ml} without transformation.
The user may call the solver directly:
\begin{minted}[bgcolor=LightGray]{julia}
# Assume that A, b, m, n, lambda have all been defined
f(x) = 0.5*x^2
reg_l1 = lambda
reg_l2 = 0.0
fconj(x) = 0.5*x^2
solver = MLSolver(f, reg_l1, reg_l2, A, b; fconj=fconj)
res = solve!(solver; options=SolverOptions(use_dual_gap=true))
\end{minted}
For the lasso, elastic net, and logistic regression problems, \method{} also provides specialized interfaces that only require the regularization parameter(s).
The Lasso interface is:
\begin{minted}[bgcolor=LightGray]{julia}
solver = GeNIOS.LassoSolver(lambda, A, b)
res = solve!(solver; options=SolverOptions(use_dual_gap=true))
\end{minted}


\section{Numerical experiments}
\label{sec:experiments}
\begin{table}[H]
    \centering
    \ra{1.3}
    \begin{tabular}{@{}lr@{}}
    \toprule
        parameter & default value \\
    \midrule
        linear system offset $\sigma$ (\S\ref{sec:linsys}) & \texttt{1e-6} \\
        linear system tolerance exponent $\gamma$ (\S\ref{sec:linsys}) & $1.2$  \\
        sketch size (\S\ref{sec:preconditioner}) & $\min(50,\, n/20)$ \\
        resketching frequency (\S\ref{sec:preconditioner}) & every $20$ iterations \\
        norm for residuals (\S\ref{sec:convergence}) & $\ell_2$ \\
        stopping tolerances $\eps_\mathrm{abs}$ and $\eps_\mathrm{rel}$ (\S\ref{sec:convergence}) & \texttt{1e-4} \\
        infeasibility tolerance $\eps_\mathrm{inf}$ (\S\ref{sec:infeasibility}) & \texttt{1e-8} \\
        over-relaxation parameter $\alpha$ (\S\ref{sec:performance-imp}) & $1.6$ \\
        penalty update factor $\tau$ (\S\ref{sec:performance-imp}) & $2$ \\
        penalty update threshold $\mu$ (\S\ref{sec:performance-imp}) & $10$\\
    \bottomrule
    \end{tabular}
    \caption{Default parameters for \method{}}
    \label{tab:default-params}
\end{table}

In this section, we showcase the performance of \method{} on a variety of problems with simulated and real-world data.
The first four examples are all machine learning problems.
The final two examples come from finance and signal processing respectively.
Each highlights one or more specific features of \method{}.
For some examples, we compare \method{} against popular open-source, ADMM solvers \osqp{} and \cosmo{}, and against commercial, interior-point solver \texttt{Mosek}.
For other examples, we are primarily interested in comparing different \method{} interfaces or options against each other.
In brief, the numerical examples are outlined below:
\begin{itemize}
    \item The \emph{elastic net} problem demonstrates the impact of \method{}'s Nystr\"{o}m preconditioning and inexact subproblem solves on a dense, low-rank machine learning problem.
    \item The \emph{logistic regression} problem shows the impact of \method{}'s approximation of the $x$-subproblem and of avoiding conic reformulation.
    \item The \emph{Huber regression} problem compares \method{}'s \mlsolver{} to a conic reformulation.
    \item The \emph{constrained least squares} problem compares \method{} to \osqp{}, \cosmo{}, and \texttt{Mosek} on a dense, low-rank QP.
    \item The \emph{portfolio optimization} problem compares \method{}'s \qpsolver{} and \ifsubmit \\\fi \genericsolver{} to \osqp{}, \cosmo{}, and \texttt{Mosek} on a sparse, structured QP.
    \item Finally, the \emph{signal decomposition} problem demonstrates that \method{}'s \ifsubmit \\\fi\genericsolver{} can be used to solve nonconvex problems as well. (Of course, \method{} loses the convergence guarantees of the convex case.)
\end{itemize}

All numerical examples can be found in the repository,
\begin{center}
   \texttt{https://github.com/tjdiamandis/GeNIOSExperiments.jl}.
\end{center}
All experiments were run using \method{} v0.2.0 on a  MacBook Pro with a M1 Max processor (8 performance cores) and 64GB of RAM.
Unless otherwise stated, we use default parameters for \method{}, listed in table~\ref{tab:default-params}.
The experiments employ no parallelization except for multithreaded BLAS.
Of course, \method{}'s use of CG to solve the linear system can naturally benefit from additional parallelization, and many proximal operators can be efficiently parallelized as well.
We leave exploration of GPU acceleration and other forms of parallelism to future work.
Similar examples to those in this section, with toy data, can be found in the \texttt{examples} section of the documentation.


\subsection{Elastic net}\label{sec:ex-elastic-net}
We first use the elastic net problem, a standard quadratic machine learning problem, to highlight the advantages of randomized
preconditioning and of inexact subproblem solves. (Because the objective is quadratic, the $x$-subproblem is not approximated here.)
The elastic net problem is
\[
\begin{aligned}
    &\text{minimize} && (1/2)\|Ax - b\|_2^2 + \lambda_1 \|x\|_1 + (\lambda_2 / 2)\|x\|_2^2,
\end{aligned}
\]
where $x \in \reals^n$ is the variable, $A \in \reals^{N \times n}$ is the feature matrix, $b \in \reals^N$ is the response vector, and $\lambda_1$, $\lambda_2$ are regularization parameters.
The lasso problem can be recovered by setting $\lambda_2 = 0$.
Clearly, this problem is a special case of~\eqref{eq:problem-formulation} with
\[
f(x) = (1/2)\|Ax - b\|_2^2 + (\lambda_2/2)\|x\|_2^2 \quad \text{and} \quad g(x) = \lambda_1 \|x\|_1,
\]
and of the machine learning problem formulation~\eqref{eq:problem-formlation-ml} with $\ell(w) = (1/2)w^2$.
The second derivative is fixed across iterations, so \method{} never needs to update the sketch used to build the preconditioner.
(The preconditioner itself may be updated if the penalty parameter $\rho$ changes.)
Furthermore, since the Hessian of $f$ is constant, the $x$-subproblem is not approximated---the linear system is simply solved inexactly.

\paragraph{Problem data.} We solve the elastic net problem with both the sparse real-sim dataset~\cite{libsvm-realsim} and the dense YearMSD dataset~\cite{misc_yearpredictionmsd_203}, which we augment with random features~\cite{rahimi2007random,rahimi2008uniform} as in~\cite{pmlr-v162-zhao22a}.
The dataset statistics are summarized in table~\ref{tab:datasets}.
We set $\lambda_2 = \lambda_1 = 0.1\|A^T b\|_\infty$.
Both of these datasets are approximately low rank, and we estimate the maximum and minimum eigenvalues of the Gram matrix $A^TA$ using the randomized Lanczos method~\cite[Alg. 5]{martinsson2020randomized}.
In both cases, the minimum eigenvalue is significantly less than the penalty parameter $\rho = 1$.
As a result, the condition number of the $\rho$-regularized linear system is approximately the maximum eigenvalue $\lambda_\mathrm{max}$.
\begin{table}[h]
    \centering
    \ra{1.3}
\ifsubmit\begin{adjustbox}{max width=\textwidth}\fi
\begin{tabular}{@{}lrrrrrr@{}}
\toprule
                & samples $N$   & features $n$  & nonzeros  & density   & $\lambda_\mathrm{max}$ (est.) & $\lambda_\mathrm{min}$ (est.) \\
\midrule
real-sim        & 72.3k         & 21.0k         & 3.709M    & 0.24\%    & 920.8                         & 0.0020\\
YearMSD         & 10k           & 20k           & 200M      & 100\%     & 4450                          & 0.0002\\
\bottomrule
\end{tabular}
\ifsubmit\end{adjustbox}\fi
    \caption{Dataset statistics.}
    \label{tab:datasets}
\end{table}

\paragraph{Experiments.}
We examine the impact of the preconditioner and the inexact solves for the elastic net problem with each of these datasets.
Figure~\ref{fig:en-sparse} shows the convergence on the sparse real-sim dataset, and figure~\ref{fig:en-dense} shows the convergence on the dense YearMSD dataset.
For both datasets, inexact subproblem solves speed up convergence by about 2$\times$.
For the dense dataset, the preconditioner reduces the time to solve the linear system by approximately 50\%. 
For the sparse dataset, the preconditioner provides only a modest reduction in the time to solve the linear system and so a modest improvement in the overall solve time: the dataset is so sparse that applying the dense preconditioner introduces considerable overhead compared to the low cost of a CG iteration.
Detailed timings are in tables~\ref{tab:elastic-net-timings-sparse} and~\ref{tab:elastic-net-timings-dense}.
We also show a high precision solve in figures~\ref{fig:elastic-net-high-prec} and \ref{fig:elastic-net-high-prec-2}, which illustrates the linear convergence of \method{} past the stopping tolerances used in our comparisons. \method{} outperforms other general-purpose convex optimization solvers by a factor of 5-10x on these problems. We provide a comparison in table~\ref{tab:elastic-net-compare} in appendix~\ref{appendix:timing}.

\begin{figure}[h]
    \captionsetup[sub]{font=scriptsize}
    \centering
    \begin{subfigure}[t]{\ifsubmit 0.48\textwidth \else 0.48\textwidth \fi}
        \centering
        \includegraphics[width=\columnwidth]{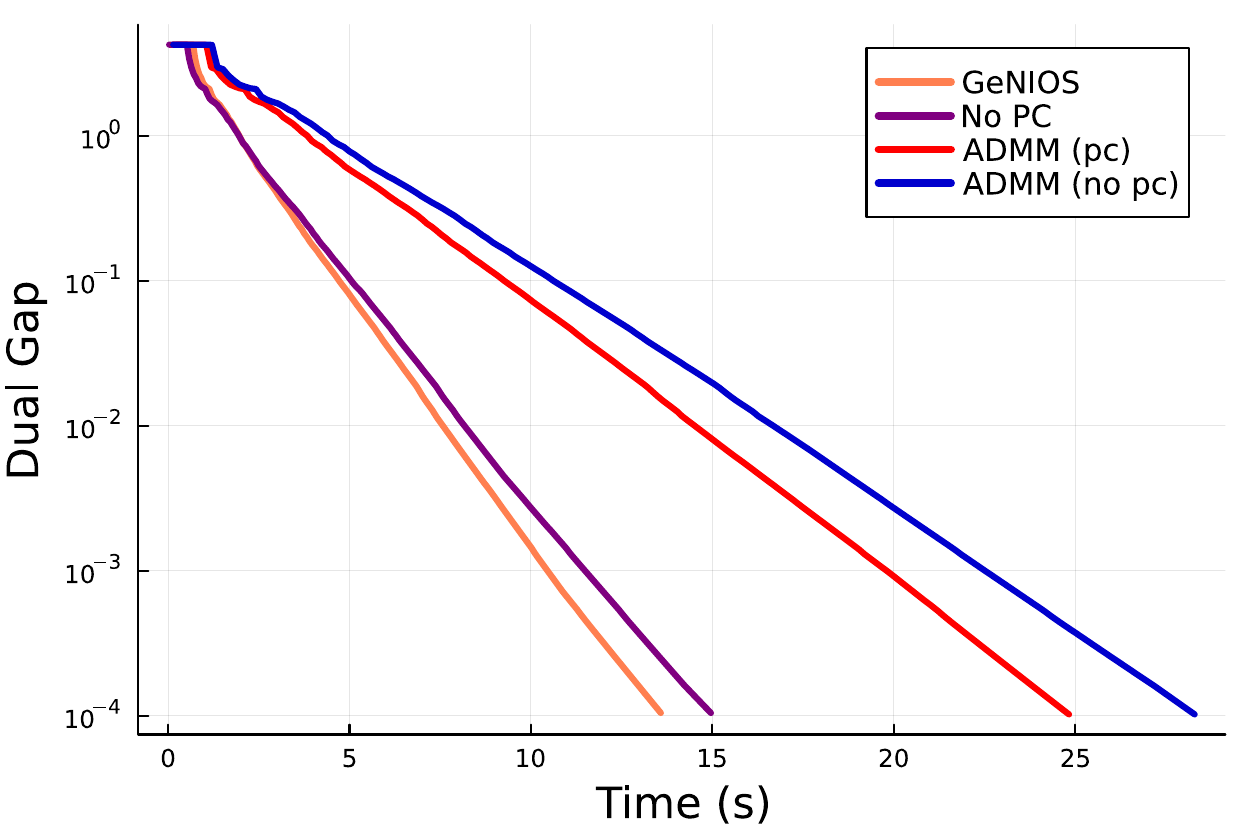}
        \caption{Relative duality gap}
    \end{subfigure}
    \hfill
    \begin{subfigure}[t]{\ifsubmit 0.48\textwidth \else 0.48\textwidth \fi}
        \centering
        \includegraphics[width=\columnwidth]{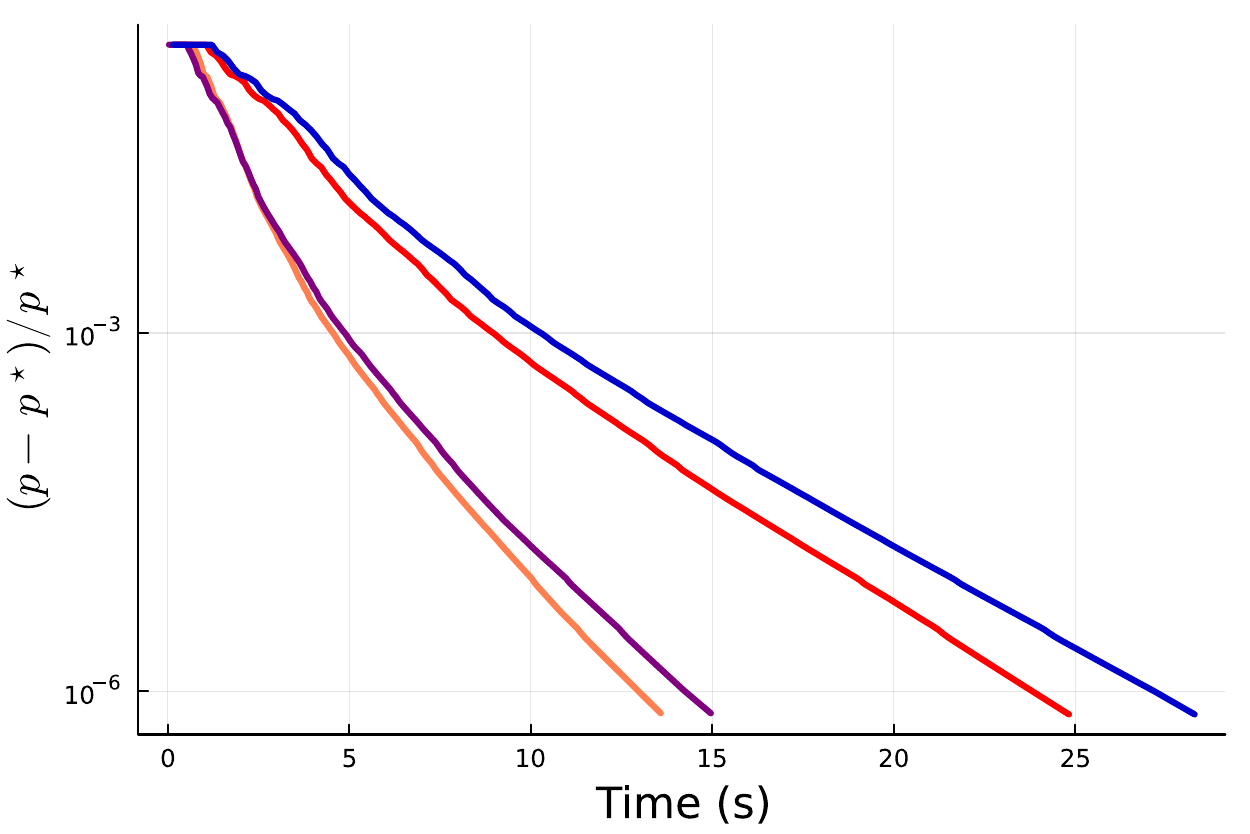}
        \caption{Objective value}
    \end{subfigure}
    \begin{subfigure}[t]{\ifsubmit 0.48\textwidth \else 0.48\textwidth \fi}
        \centering
        \includegraphics[width=\columnwidth]{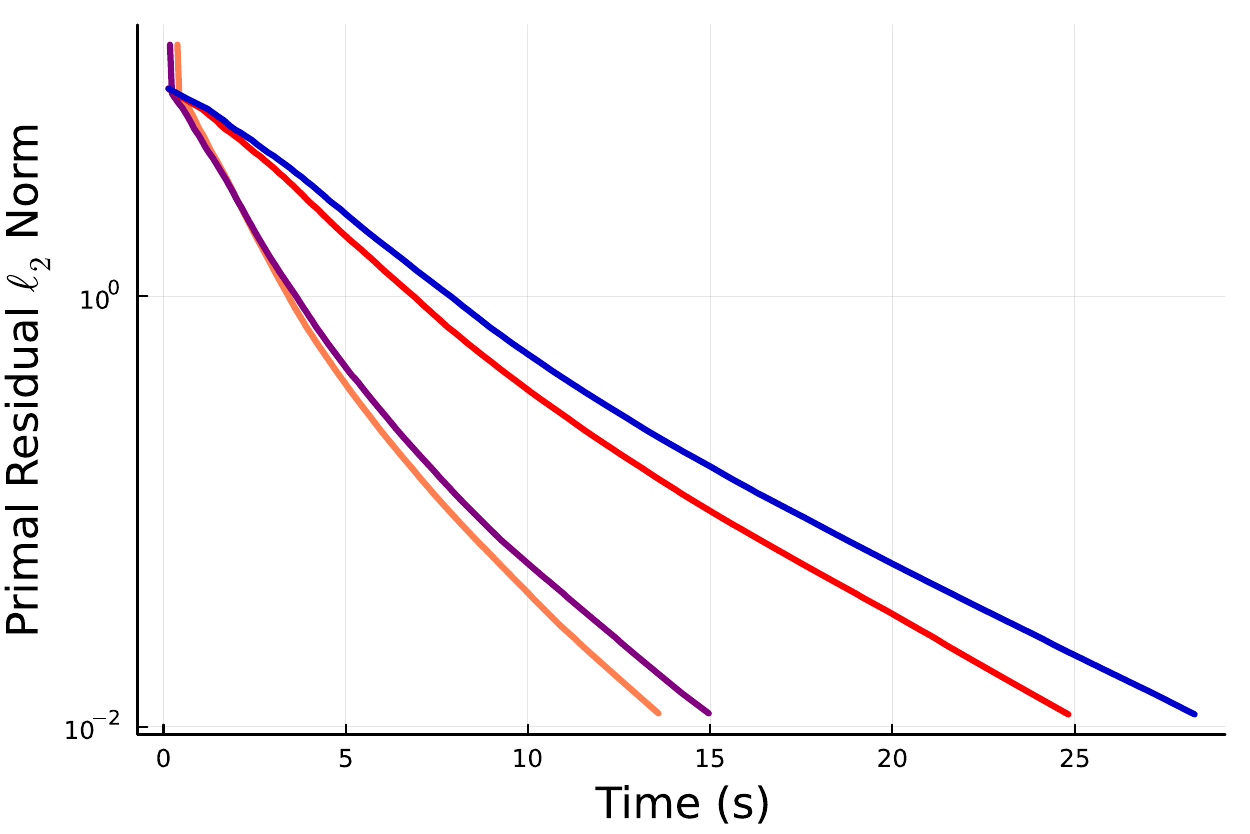}
        \caption{Primal residual}
    \end{subfigure}
    \hfill
    \begin{subfigure}[t]{\ifsubmit 0.48\textwidth \else 0.48\textwidth \fi}
        \centering
        \includegraphics[width=\columnwidth]{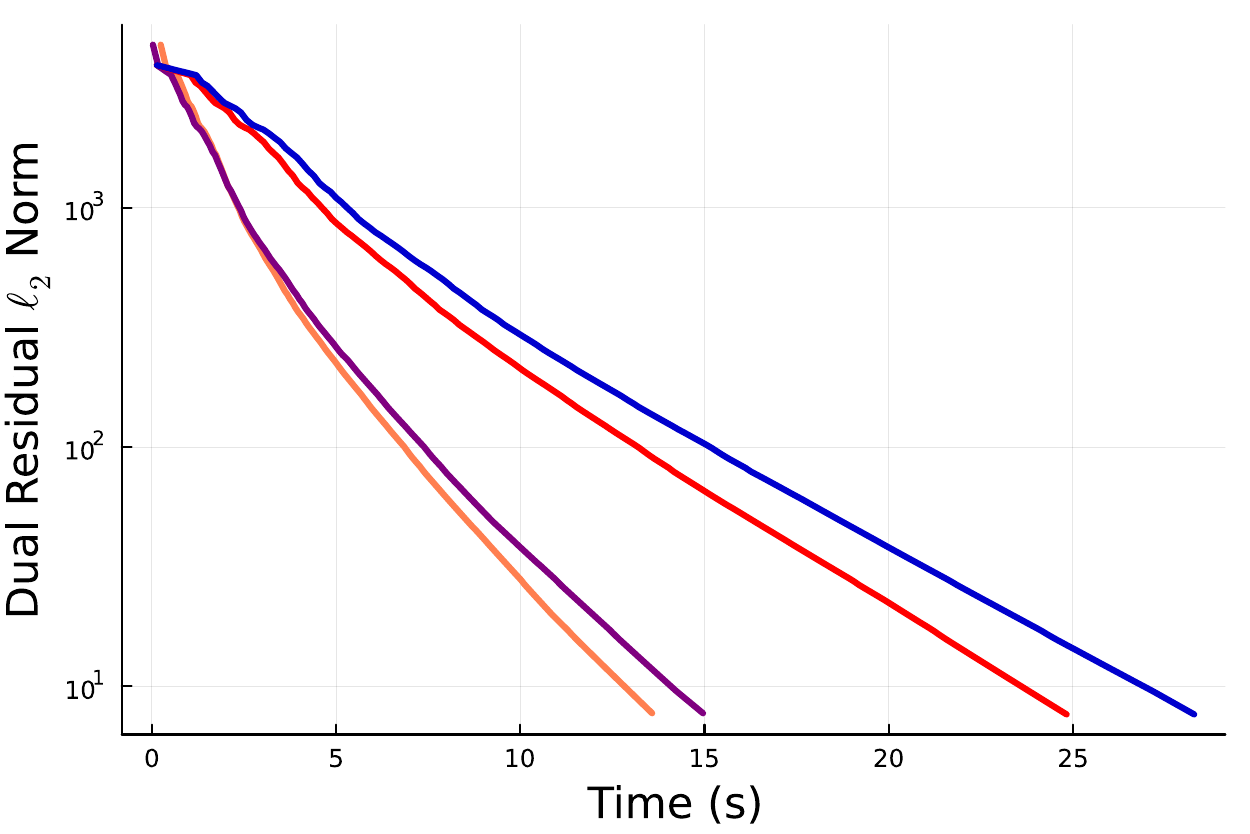}
        \caption{Dual residual}
    \end{subfigure}
     \caption{Inexact subproblem solves improve \method{}'s convergence time on the elastic net problem with the real-sim dataset, but the preconditioner has little effect since the dataset is very sparse. Note that `pc' (`no pc') indicates that we did (did not) use a preconditioner.}
     \label{fig:en-sparse}
\end{figure}

\begin{figure}[h]
    \captionsetup[sub]{font=scriptsize}
    \centering
    \begin{subfigure}[t]{\ifsubmit 0.48\textwidth \else 0.48\textwidth \fi}
        \centering
        \includegraphics[width=\columnwidth]{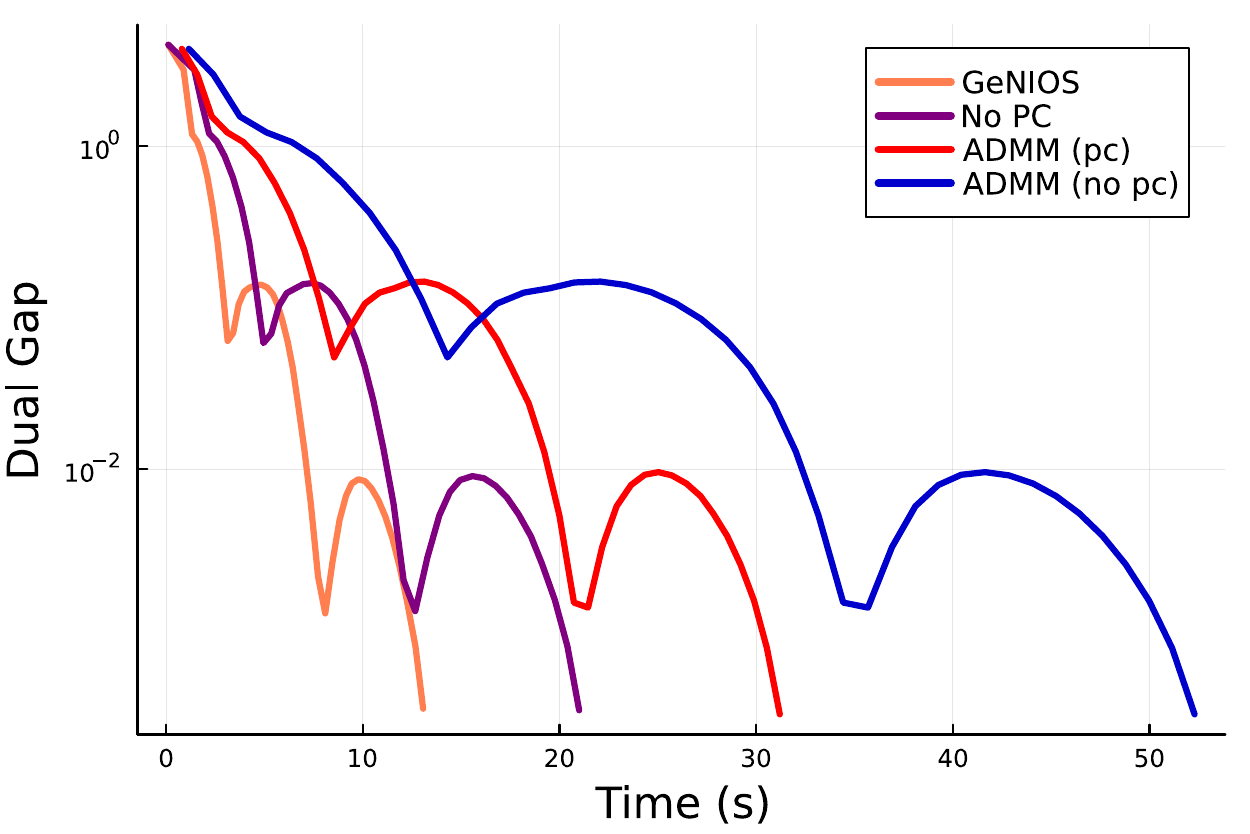}
        \caption{Relative duality gap}
    \end{subfigure}
    \hfill
    \begin{subfigure}[t]{\ifsubmit 0.48\textwidth \else 0.48\textwidth \fi}
        \centering
        \includegraphics[width=\columnwidth]{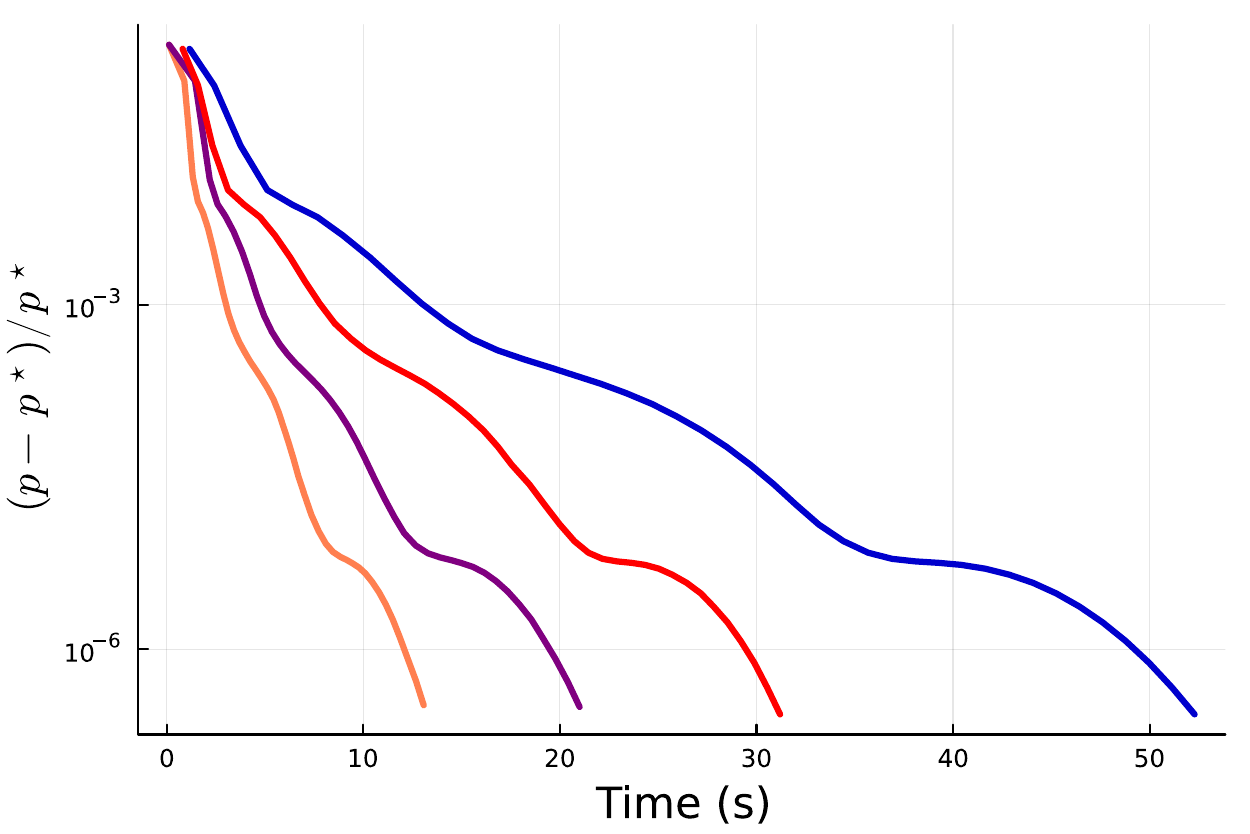}
        \caption{Objective value}
    \end{subfigure}
    \begin{subfigure}[t]{\ifsubmit 0.48\textwidth \else 0.48\textwidth \fi}
        \centering
        \includegraphics[width=\columnwidth]{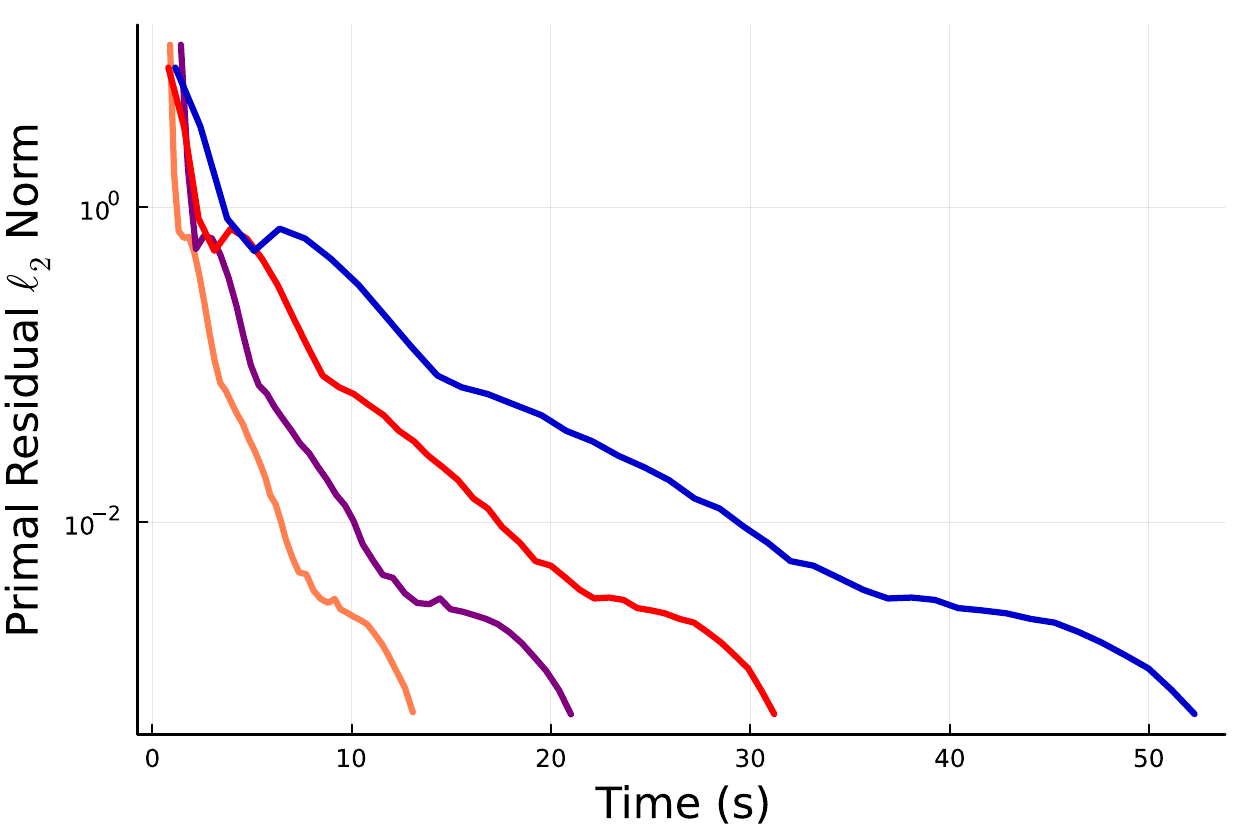}
        \caption{Primal residual}
    \end{subfigure}
    \hfill
    \begin{subfigure}[t]{\ifsubmit 0.48\textwidth \else 0.48\textwidth \fi}
        \centering
        \includegraphics[width=\columnwidth]{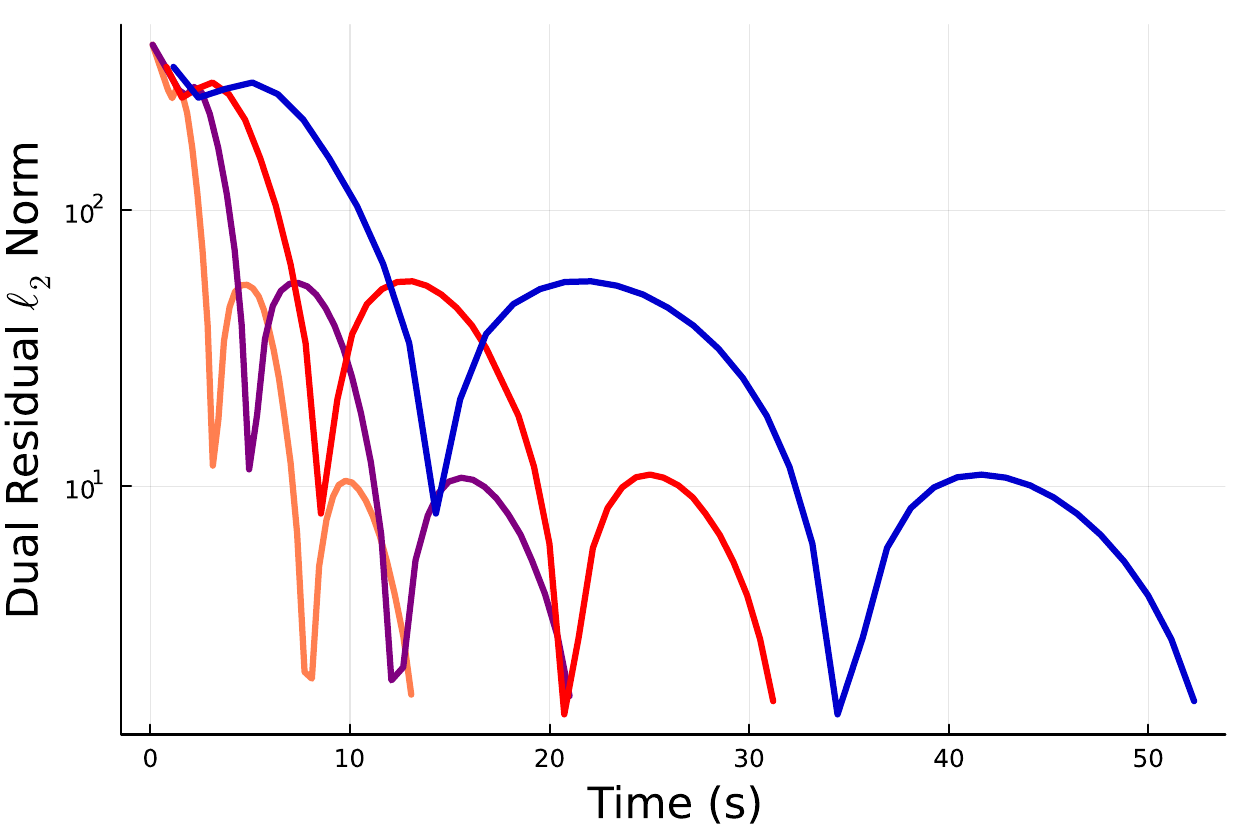}
        \caption{Dual residual}
    \end{subfigure}
     \caption{Both inexact subproblem solves and the preconditioner improve \method{}'s convergence time on the elastic net problem with the dense YearMSD dataset.}
     \label{fig:en-dense}
\end{figure}

\begin{table}[h]
    \centering
    \ra{1.3}
\ifsubmit\begin{adjustbox}{max width=\textwidth}\fi
\begin{tabular}{@{}lrrrr@{}}
\toprule
&GeNIOS & GeNIOS (no pc) & ADMM & ADMM (no pc)\\
\midrule
setup time (total) & 0.837s & 0.011s & 0.401s & 0.012s\\
\quad preconditioner time & 0.802s & 0.000s & 0.390s & 0.000s\\
solve time & 13.659s & 15.053s & 24.954s & 28.428s\\
\quad number of iterations &  190 &  190 &  190 &  190\\
\quad total linear system time & 8.682s & 10.039s & 19.898s & 23.356s\\
\quad avg. linear system time & 45.695ms & 52.838ms & 104.724ms & 122.928ms\\
\quad total prox time & 0.004s & 0.003s & 0.004s & 0.004s\\
\quad avg. prox time & 0.019ms & 0.018ms & 0.018ms & 0.019ms\\
total time & 14.497s & 15.065s & 25.355s & 28.440s\\
\bottomrule
\end{tabular}
\ifsubmit\end{adjustbox}\fi
    \caption{Timings for \method{} with and without preconditioning (indicated by `pc' and `no pc' respectively) and inexact solves for the elastic net problem with the sparse real-sim dataset.}
    \label{tab:elastic-net-timings-sparse}
\end{table}

\begin{table}[h]
    \centering
    \ra{1.3}
\begin{adjustbox}{max width=\textwidth}
\begin{tabular}{@{}lrrrr@{}}
\toprule
&GeNIOS & GeNIOS (no pc) & ADMM & ADMM (no pc)\\
\midrule
setup time (total) & 2.072s & 0.052s & 1.893s & 0.050s\\
\quad preconditioner time & 2.023s & 0.000s & 1.844s & 0.000s\\
solve time & 13.477s & 21.603s & 31.858s & 53.345s\\
\quad number of iterations &   42 &   42 &   42 &   42\\
\quad total linear system time & 8.394s & 16.350s & 26.517s & 47.612s\\
\quad avg. linear system time & 199.848ms & 389.286ms & 631.365ms & 1133.625ms\\
\quad total prox time & 0.001s & 0.001s & 0.001s & 0.001s\\
\quad avg. prox time & 0.018ms & 0.018ms & 0.018ms & 0.018ms\\
total time & 15.549s & 21.655s & 33.751s & 53.395s\\
\bottomrule
\end{tabular}
\end{adjustbox}
    \caption{Timings for \method{} with and without preconditioning and inexact solves for the elastic net problem with the dense YearMSD dataset.}
    \label{tab:elastic-net-timings-dense}
\end{table}

\begin{figure}[h]
    \captionsetup[sub]{font=scriptsize}
    \centering
    \begin{subfigure}[t]{0.32\textwidth}
        \centering
        \includegraphics[width=\columnwidth]{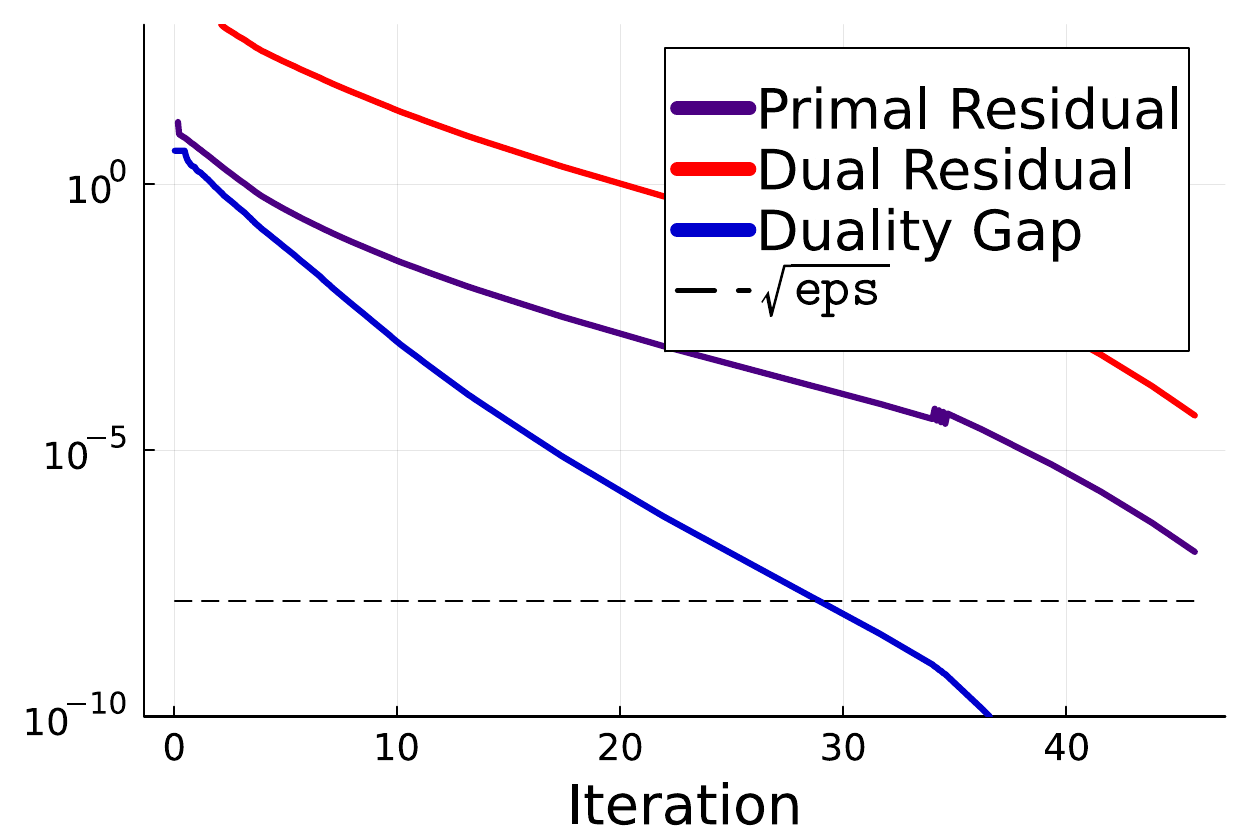}
        \caption{Elastic net, real-sim}
        \label{fig:elastic-net-high-prec}
    \end{subfigure}
    \hfill
    \begin{subfigure}[t]{0.32\textwidth}
        \centering
        \includegraphics[width=\columnwidth]{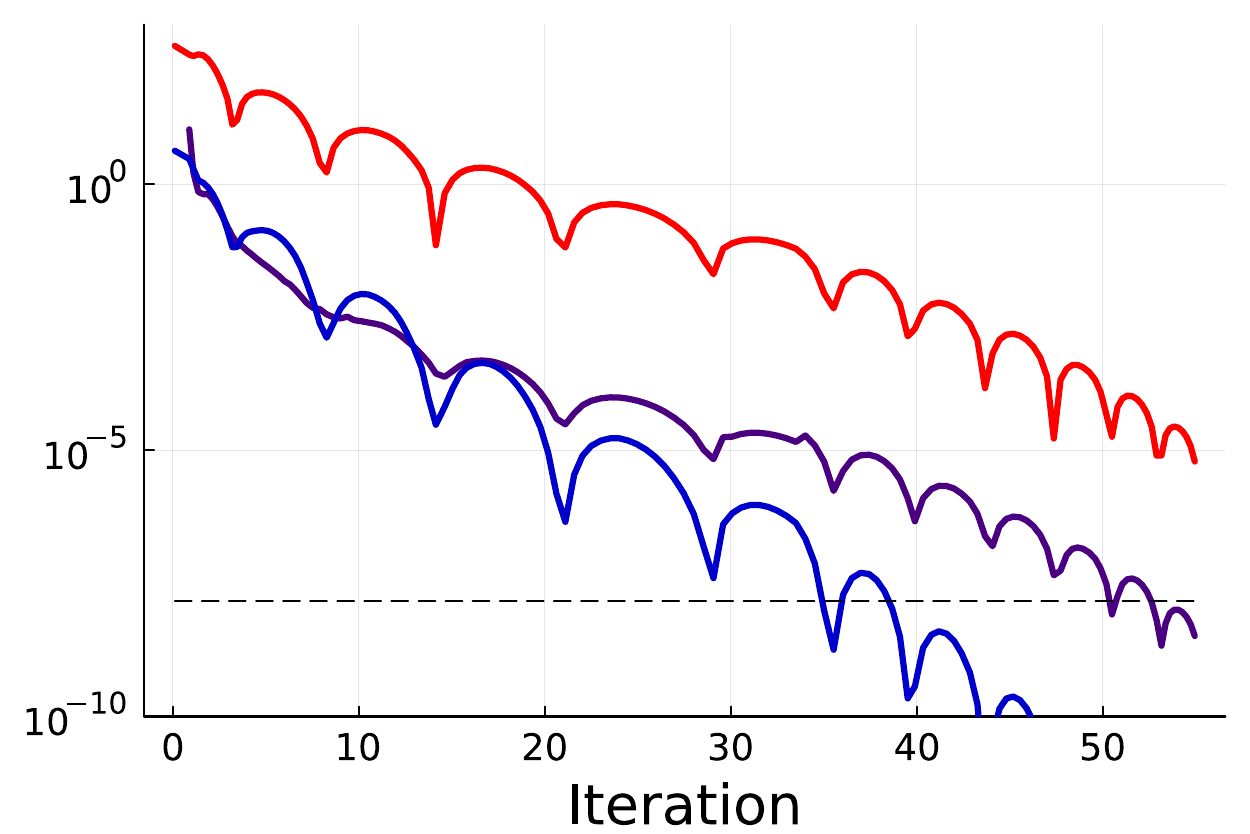}
        \caption{Elastic net, YearMSD}
        \label{fig:elastic-net-high-prec-2}
    \end{subfigure}
    \begin{subfigure}[t]{0.32\textwidth}
        \includegraphics[width=\columnwidth]{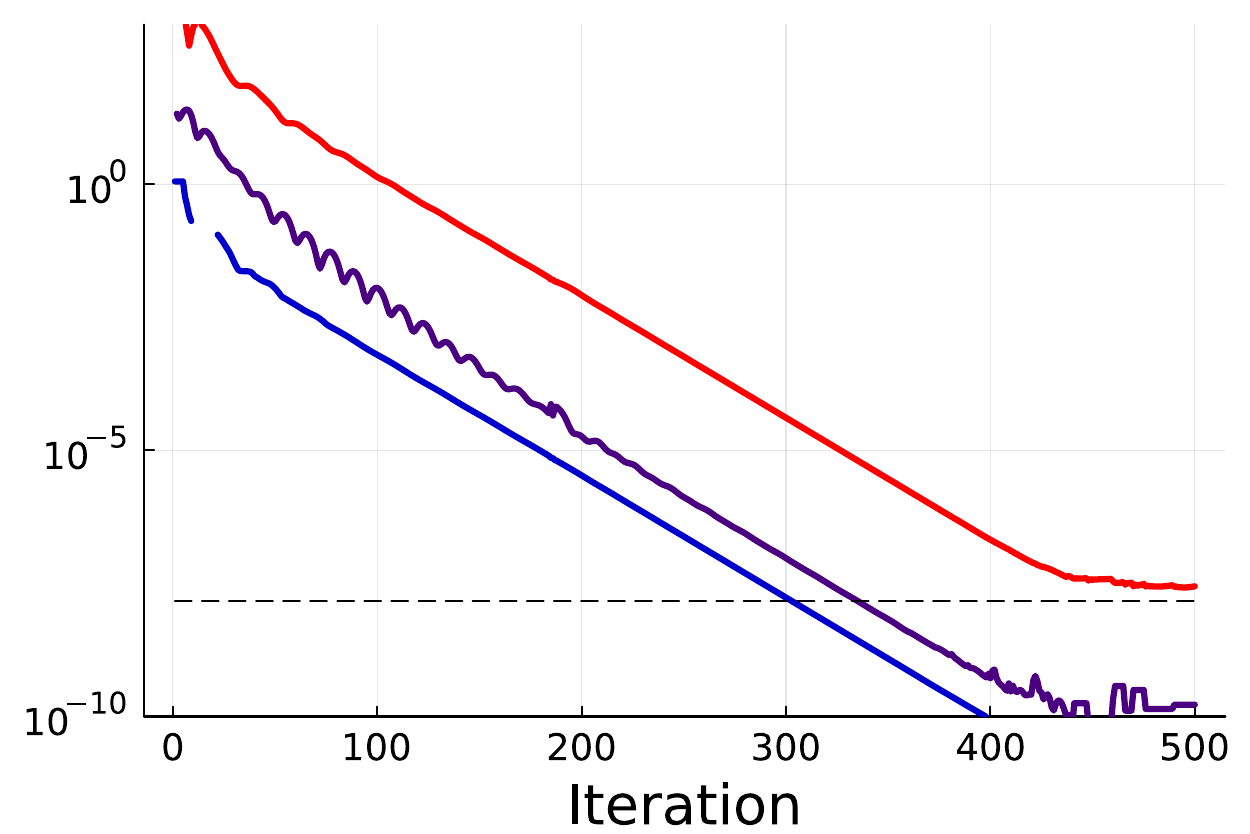}
        \caption{Logistic regression, real-sim}
        \label{fig:logistic-high-prec}
    \end{subfigure}
    \caption{\method{} converges linearly on the elastic net problem (a, b) and the logistic regression problem (c) up to high accuracy. Here, \texttt{eps} denotes machine epsilon.}
    \label{fig:high-prec-solves}
\end{figure}

\subsection{Logistic Regression}
\label{sec:ex-logistic}
This example highlights the benefits of using an approximate $x$-subproblem, in addition to an inexact solve.
For the logistic regression problem, with problem data $\tilde a_i \in \reals^n$ and $\tilde b_i \in \reals$ for $i = 1, \dots, N$, where $\tilde b_i \in \{\pm 1\}$, define
\[
\mathbb{P}\left({\tilde b_i \mid \tilde a_i}\right) = \frac{1}{1 + \exp\left(\tilde b_i(\tilde a_i^Tx)\right)} = \frac{1}{1 + \exp(a_i^Tx)},
\]
where $a_i = \tilde b_i \tilde a_i$.
Use the negative of the log likelihood as the loss, giving
the optimization problem
\[
\begin{aligned}
& \text{minimize} && \sum_{i=1}^m \log\left(1 + \exp(a_i^Tx)\right) + \lambda_1 \|x\|_1.
\end{aligned}
\]
Recognize that, in the form of~\eqref{eq:problem-formlation-ml}, the per-sample loss function is $\ell(w) = \log(1 + \exp(w))$.

\paragraph{Experiments.}
We use the real-sim dataset as in the elastic net experiment (see table~\ref{tab:datasets}), which corresponds to a binary classification problem and take $\lambda_1 = 0.1\|A^T\ones\|_\infty$.
We solve this problem with three distinct methods: \method{}'s \mlsolver{}, with and without both preconditioning and inexact solves; 
the \mlsolver{} with an exact $x$-update, which we solve with \texttt{Optim.jl}'s~\cite{mogensen2018optim} implementation of L-BFGS~\cite{nocedal1980-lbfgs,liu1989-lbfgs} with default parameters\footnote{
    The stopping criterion is when the infinity norm of the gradient is under \texttt{1e-8}.
}; 
and \method{}'s \qpsolver{}, which we modified to handle exponential cone constraints.
In the conic form problem, we solve the $x$-subproblem inexactly and we use the fast projection onto the exponential cone from Friberg~\cite{friberg2023projection} for the $z$-subproblem.
In our formulation, the conic form problem has $2n + 5N$ variables and $2n + 9N$ constraints instead of the $2n$ variables and $n$ constraints in the \mlsolver{} formulation (see appendix~\ref{app:logistic-conic} for the equivalent conic problem).
We show convergence in figure~\ref{fig:logistic} and breakdown timing in table~\ref{tab:logistic}. 
The conic form solve is not plotted because its residuals refer to different quantities, making direct comparison difficult.
In this example, using the preconditioner does not add more of a speedup than its overhead since the dataset is quite sparse.
Standard ADMM is slower than \method{}, but it is faster than using an exact solve for the approximate $x$-subproblem.
The conic form problem has the slowest solve time, and its per-iteration time is longer than \method{} with inexact solves.

\paragraph{Discussion.}
The results in table~\ref{tab:logistic} tell an interesting story.
First, as in the previous example, inexact solves of the approximate $x$-subproblem have little impact on the number of iterations \method{} takes to converge.
However, there is a clear trade off between approximating the $x$-subproblem and solving the exact problem: the exact problem takes longer to solve but significantly cuts down the number of ADMM iterations the algorithm takes to converge.
The conic form's $x$-update is also faster than standard ADMM with an exact solve, as the linear system is solved inexactly and the constraint matrix is very sparse and structured. 
However, the algorithm pays for this speed in the $z$-update, which requires computing $2N$ projections onto the exponential cone.
Overall, these results highlight the benefits of solving logistic regression in a more natural form, such as~\eqref{eq:problem-formlation-ml}, instead of the conic form, and of approximating the $x$-subproblem to speed up solve time, even if the number of iterations required to converge increases.
\method{} also outperforms Mosek on these problems by a factor of 2-4x. (See table~\ref{tab:logistic-compare} in appendix~\ref{appendix:timing}).

\begin{figure}[h]
    \captionsetup[sub]{font=scriptsize}
    \centering
    \begin{subfigure}[t]{0.48\textwidth}
        \centering
        \includegraphics[width=\columnwidth]{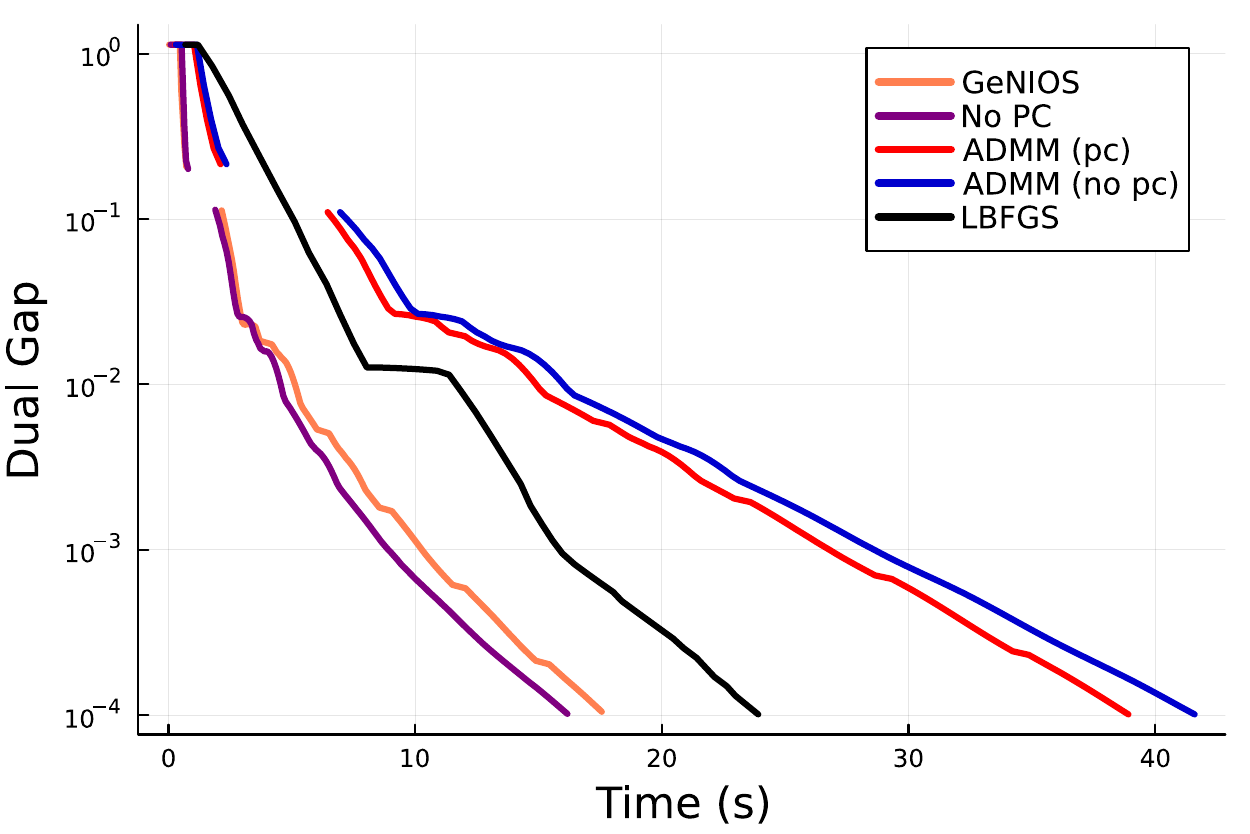}
        \caption{Relative duality gap convergence}
    \end{subfigure}
    \hfill
    \begin{subfigure}[t]{0.48\textwidth}
        \centering
        \includegraphics[width=\columnwidth]{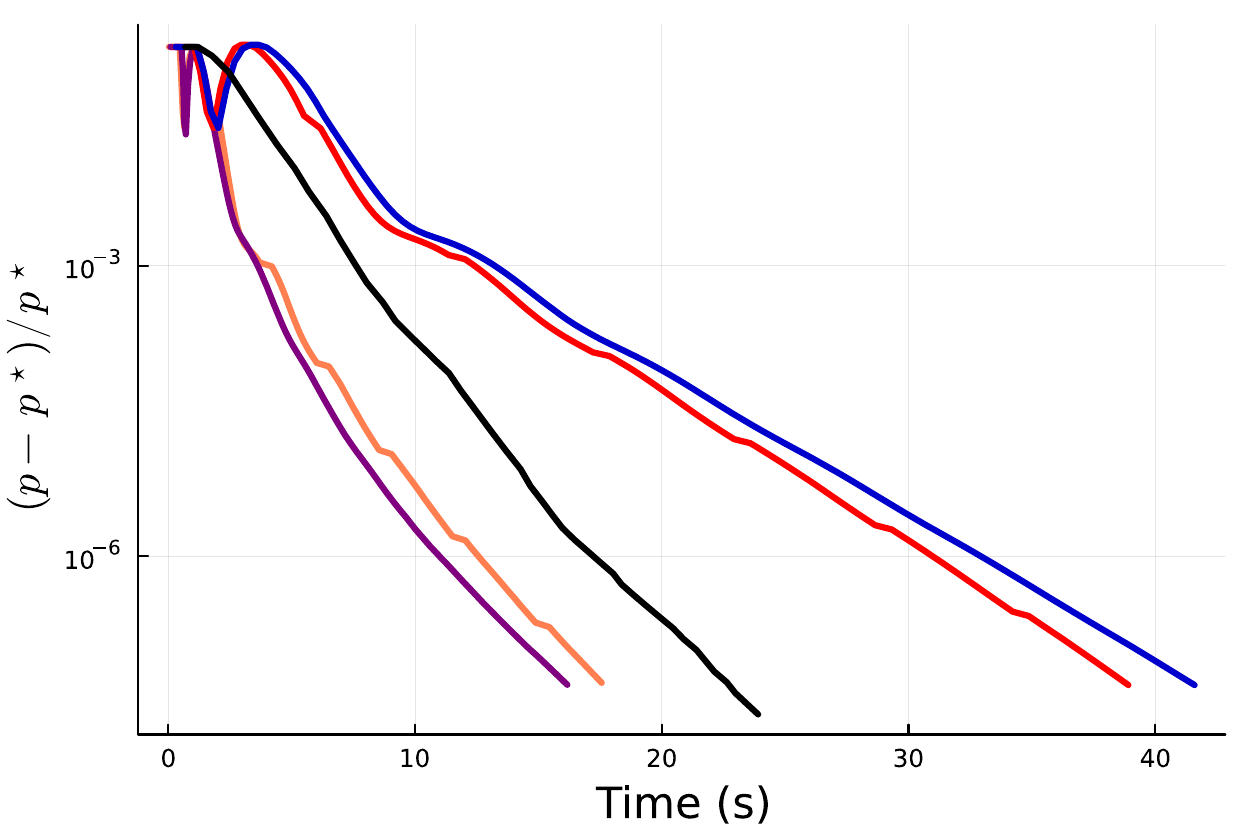}
        \caption{Objective value convergence}
    \end{subfigure}
    \begin{subfigure}[t]{0.48\textwidth}
        \centering
        \includegraphics[width=\columnwidth]{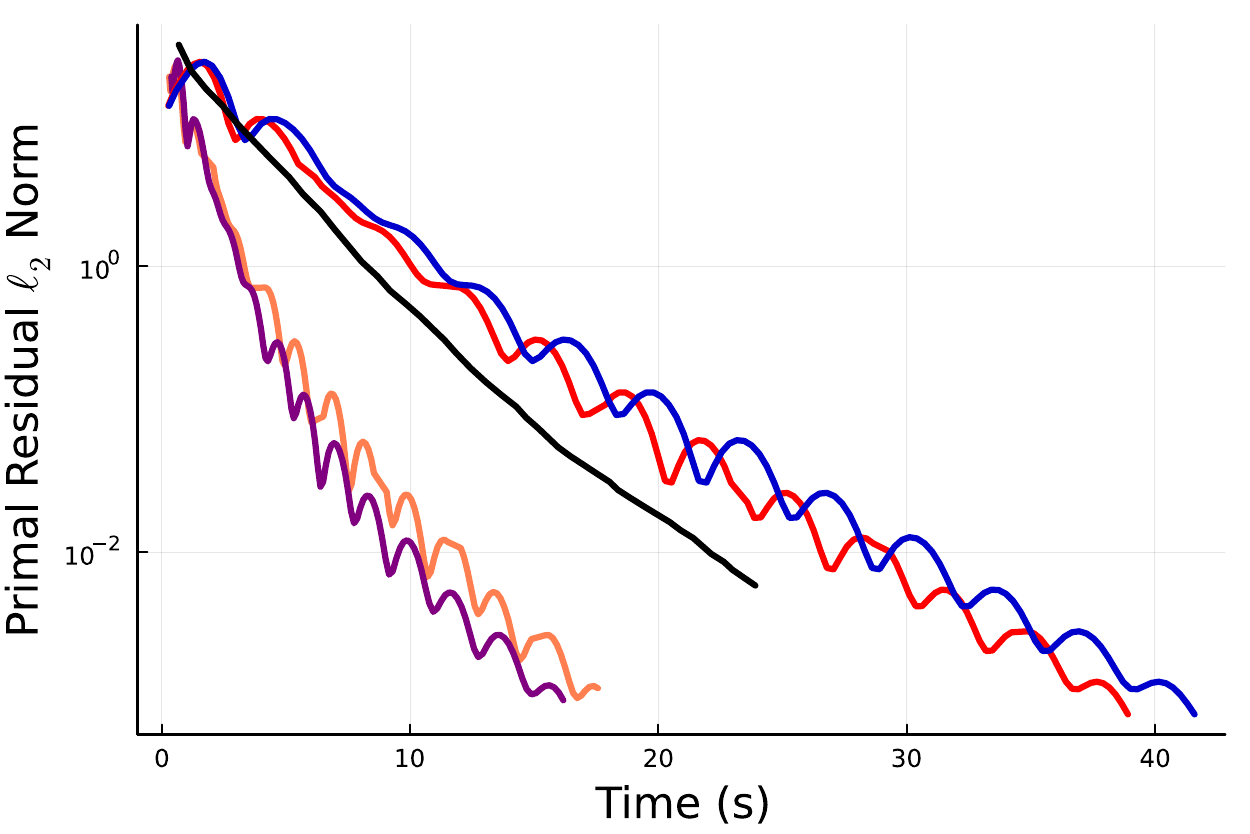}
        \caption{Primal residual convergence}
    \end{subfigure}
    \hfill
    \begin{subfigure}[t]{0.48\textwidth}
        \centering
        \includegraphics[width=\columnwidth]{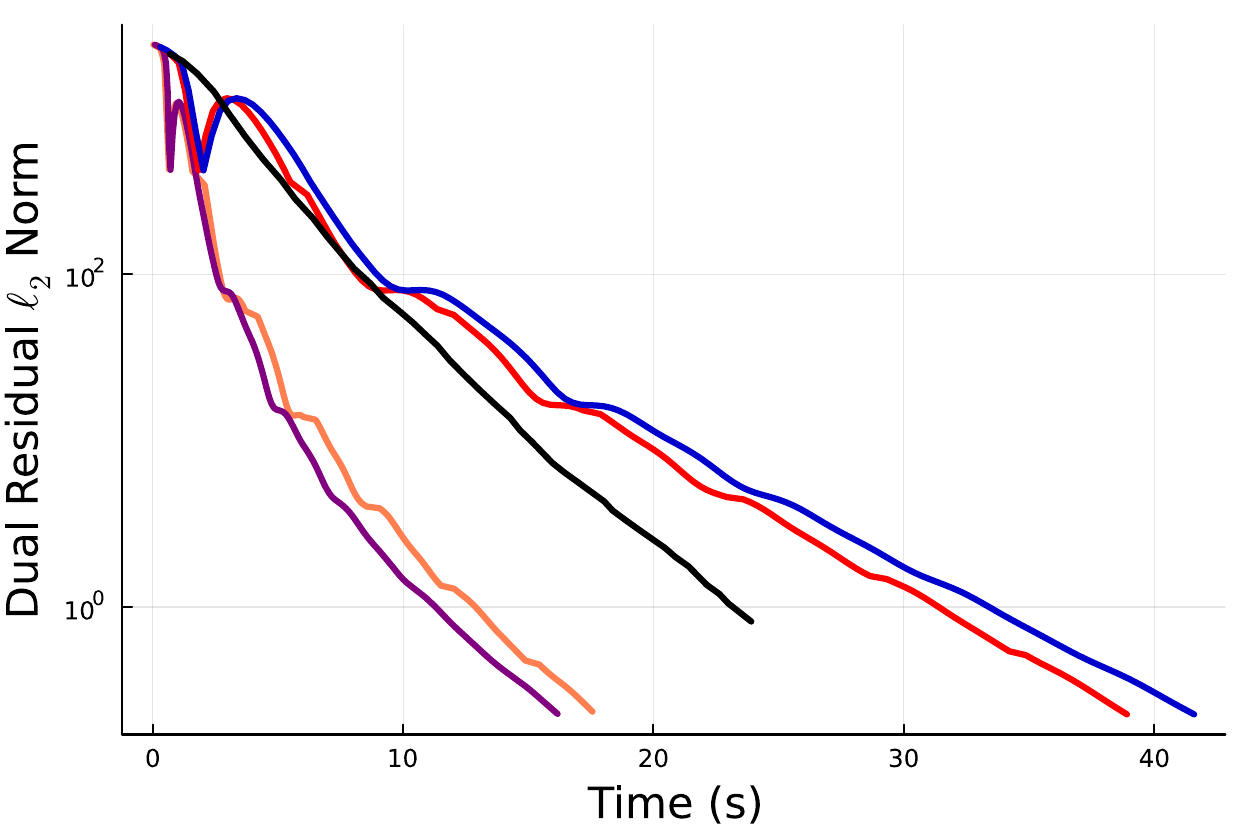}
        \caption{Dual residual convergence}
    \end{subfigure}
    \caption{
    Both subproblem approximation and inexact solves improve \method{}'s convergence time on the logistic regression problem with the real-sim dataset. Similarly to the elastic net problem, the preconditioner has little effect since this dataset is very sparse. (Note that the relatively duality gap may be ill-defined due to domain or divide-by-zero issues. See appendix~\ref{appendix:dual-gap} for details.)
    }
    \label{fig:logistic}
\end{figure}

\begin{table}[h]
    \centering
    \footnotesize
\begin{adjustbox}{max width=\textwidth}
\begin{tabular}{@{}lrrrrrr@{}}
\toprule
&GeNIOS & G. (no pc) & G. (exact) & G. (no pc, exact) & ADMM LBFGS & Conic\\
\midrule
setup time (total) & 0.416s & 0.010s & 0.404s & 0.010s & 0.010s & 0.019s\\
\quad preconditioner time & 0.394s & 0.000s & 0.393s & 0.000s & 0.000s & 0.000s\\
solve time & 17.731s & 16.355s & 39.165s & 41.886s & 24.389s & 81.700s\\
\quad number of iterations &  134 &  136 &  137 &  137 &   45 &  128\\
\quad total linear system time & 11.507s & 12.420s & 32.774s & 37.830s & 23.307s & 41.788s\\
\quad avg. linear system time & 85.877ms & 91.326ms & 239.224ms & 276.133ms & 517.943ms & 326.471ms\\
\quad total prox time & 0.002s & 0.003s & 0.003s & 0.003s & 0.001s & 35.173s\\
\quad avg. prox time & 0.019ms & 0.019ms & 0.019ms & 0.019ms & 0.019ms & 274.789ms\\
total time & 18.147s & 16.365s & 39.568s & 41.896s & 24.399s & 81.719s\\
\bottomrule
\end{tabular}
\end{adjustbox}
    \caption{Timings for \method{} with and without preconditioning, subproblem approximations, and inexact solves for the logistic regression problem with the real-sim dataset. We also compare with the conic form problem.}
    \label{tab:logistic}
\end{table}

\subsection{Huber fitting}
\label{sec:ex-huber}
In this example, we showcase \method{}'s ability to handle custom loss functions, which are often not supported by standard machine learning solvers.
Handling these custom loss functions directly, instead of solving the equivalent conic program, speeds up the solution of machine learning problems, which we demonstrate for the case of Huber fitting~\cite{huber1964robust}.
The Huber fitting problem replaces the standard squared-error loss function (\cf the elastic net problem in~\S\ref{sec:ex-elastic-net})
with a loss function that is less sensitive to outliers, defined as
\[
\ell^\mathrm{hub}(w) =
\begin{cases}
w^2 & \lvert w\rvert \leq 1 \\
2|w| - 1 & \lvert w \rvert > 1.
\end{cases}
\]
This function is easily verified to be convex and smooth.
The $\ell_1$-regularized Huber fitting problem is then
\[
\begin{aligned}
&\text{minimize}     && \sum_{i=1}^N \ell^\mathrm{hub}(a_i^T x - b_i) + \lambda_1\|x\|_1,
\end{aligned}
\]
with problem data $a_i \in \reals^n$ and $b_i \in \reals$ for $i = 1, \dots, N$, variable $x \in \reals^n$, and regularization parameter $\lambda_1 \ge 0$.
Huber fitting, a form of \emph{robust regression}, often has superior performance on real-world data and obviates the need for outlier detection in data pre-processing.
However, the Huber loss and other robust loss functions are usually not supported by standard solvers.
\method{}'s \mlsolver{}, on the other hand, provides full support for custom convex loss functions, as described in~\S\ref{sec:app-ml}.

\paragraph{Equivalent quadratic program.}
The $\ell_1$-regularized Huber fitting problem can also be written as a quadratic program (see~\cite{mangasarian2000robust} for details) by introducing new variables $q \in \reals^n$, $r \in \reals^N$, $s \in \reals^N$, and $t \in \reals^N$:
\[
\begin{aligned}
&\text{minimize}        && r^Tr + 2\ones^T(s + t) + \lambda_1 q\\
&\text{subject to}      && Ax - r - s + t = b \\
                       &&& -q \le x \le q\\
                       &&& 0 \le s, t
\end{aligned}
\]
where $A$ is a $N \times n$ matrix with rows $a_i^T$ and $b$ is a vector with elements $b_i$.
This problem now has $2n + 3N$ variables, and the constraint matrix in~\eqref{eq:problem-formulation-qp} is of size $2n + 3N \times 2n + 3N$.
Putting other robust regression problems into standard forms can also lead to substantial increases in problem size.

\paragraph{Problem data.}
For this problem, we generate random data with $n$ features and $N = n/2$ samples for varying values of $n$.
We sample each feature $A_{i,j}$ independently from the standard normal distribution $\mathcal{N}(0, 1)$ and then normalize the columns to have zero mean and unit $\ell_2$ norm.
The true value of weights $x^\star$ has 10\% nonzero entries, each sampled from $\mathcal{N}(0, 1)$.
The response vector $b$ is computed as $Ax^\star + 0.1v$, where $v \sim \mathcal{N}(0, 1)$.
Finally, we make 5\% of $b$ outliers by adding $u$ to these entries, where each $u$ is drawn uniformly at random from $\{-10, 10\}$. 
We take $\lambda_1 = 0.1\|A^Tb\|_\infty$.
We vary $n$ from 250 to 16,000.

\paragraph{Comparing solver interfaces.}
First, we examine the difference between using the \method{} \texttt{MLSolver} interface and using the \texttt{QPSolver} interface.
Since the residuals for the \texttt{MLSolver} and \texttt{QPSolver} interfaces are different, we overwrite the convergence criteria calculation to examine the subdifferential of the original loss function:
\[
    A^T {\ell^\mathrm{hub}}'(Ax - b) + \lambda \partial \|x\|_1,
\]
where the derivative of the loss is applied elementwise and
\[
\partial \|x\|_1 = \left\{v \mid v_i = \mathbf{sign}(x_i) ~ \text{if} ~ \abs{x_i} > 0 ~ \text{and otherwise} ~ v_i \in [-1,1]\right\}.
\]
We say that the solver has converged to an optimal solution when the distance from the vector $0$ to the subdifferential set is less than \texttt{1e-4}, \ie when there is some vector in this set with $\ell_2$ norm less than \texttt{1e-4}.
We do not use randomized preconditioning for this problem, as the random data matrix $A$ is not approximately low rank.
We solve the problem for $n$ varied from $250$ to $16,000$.
Figure~\ref{fig:huber-interface} shows timing for the linear system solve in the $x$-subproblem and for the overall solve for varying $n$, and it shows the residuals' convergence as a function of cumulative time spent on solving the linear system for $n = 16,000$.
We provide detailed solve time breakdowns in table~\ref{tab:huber} for $n=16,000$.

\paragraph{Discussion.}
The \mlsolver{} speeds up solve time by over an order of magnitude compared to the \qpsolver{}, even for small problem sizes.
The dominant operation per iteration---the linear system solve---is over an order of magnitude faster
(table~\ref{tab:huber}).
\mlsolver{} is also faster than \qpsolver{} even after subtracting the time to solve the linear system.
Both methods require about the same number of iterations to reach the stopping criterion.
Multiplication by $A$ and $A^T$ are two most expensive operations in the algorithm.
\mlsolver{} represents $A$ as a dense matrix and uses optimized BLAS operations to compute matrix-vector products.
In contrast, \qpsolver{} stores the data matrix $A$ as part of the large, sparse constraint matrix $M$ (see \eqref{eq:problem-formulation-qp}) and
so cannot apply $A$ and $A^T$ efficiently.

\begin{figure}[h]
    \captionsetup[sub]{font=scriptsize}
    \centering
    \begin{subfigure}[t]{0.48\textwidth}
        \centering
        \includegraphics[width=\columnwidth]{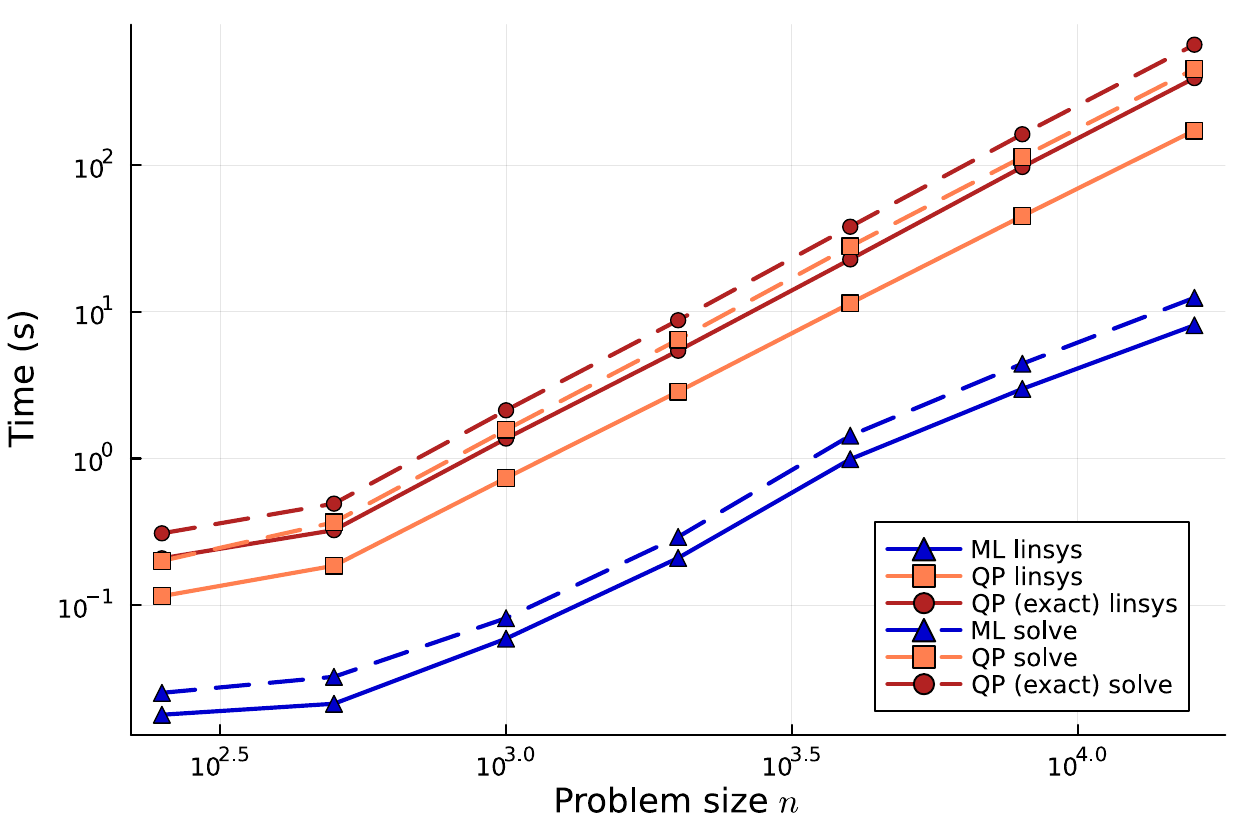}
        \caption{Solve time comparisons for $n = 250$ to $n=16,000$.}
        \label{fig:huber-solve-time}
    \end{subfigure}
    \hfill
    \begin{subfigure}[t]{0.48\textwidth}
        \centering
        \includegraphics[width=\columnwidth]{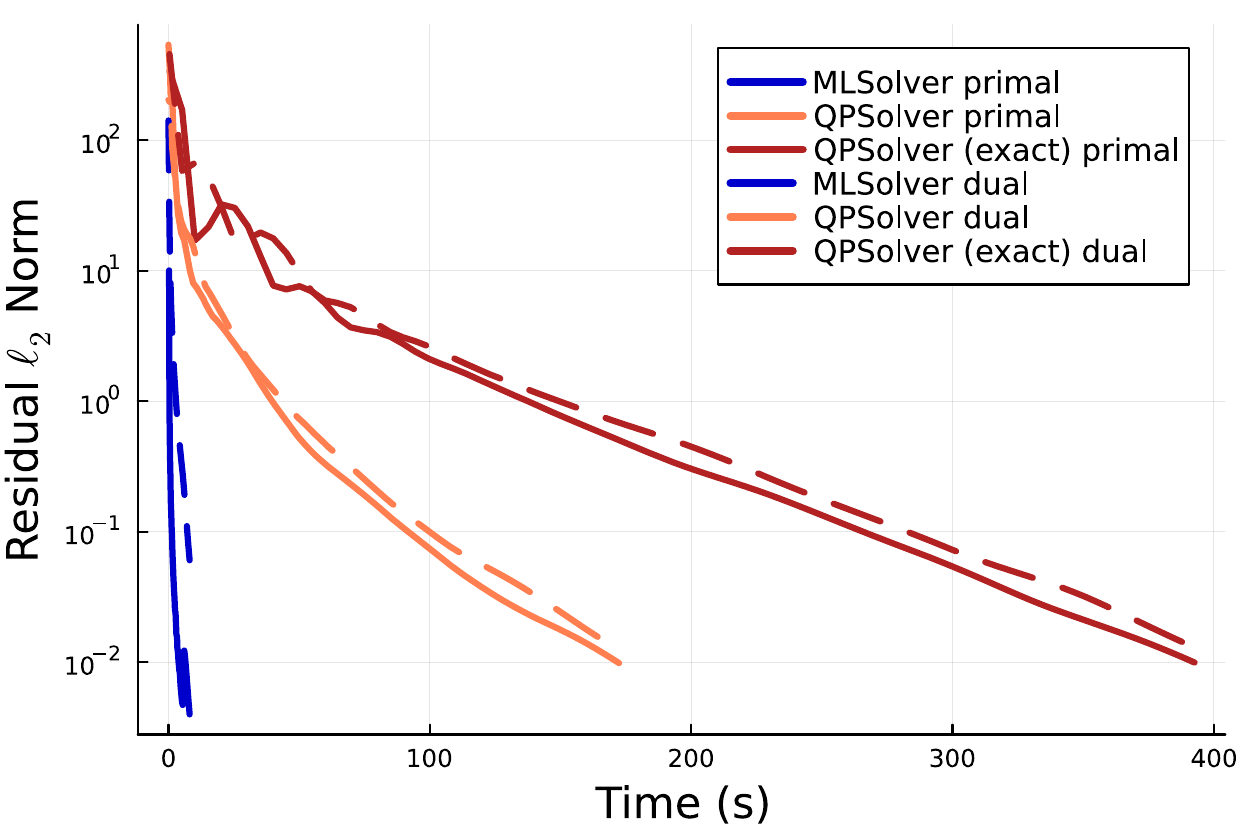}
        \caption{Primal and Dual residual convergence for $n=16,000$.}
    \end{subfigure}
    \caption{\method{}'s \mlsolver{} avoids conic reformulation and significantly outperforms the \qpsolver{} (with and without exact subproblem solves) for the Huber fitting problem.}
    \label{fig:huber-interface}
\end{figure}

\begin{table}[]
    \centering
\begin{tabular}{@{}lrrrr@{}}
\toprule
&MLSolver & QPSolver & QPSolver (exact)\\
\midrule
setup time (total) & 0.012s  & 0.000s & 0.000s\\
\quad preconditioner time & 0.000s  & 0.000s & 0.000s\\
solve time & 12.470s  & 453.366s & 663.108s\\
\quad number of iterations &   63  &   92 &   88\\
\quad total linear system time & 8.103s  & 172.414s & 392.644s\\
\quad avg. linear system time & 128.624ms & 1874.069ms & 4461.860ms\\
\quad total prox time & 0.001s  & 0.005s & 0.005s\\
\quad avg. prox time & 0.014ms  & 0.051ms & 0.052ms\\
total time & 12.482s & 453.366s & 663.109s\\
\bottomrule
\end{tabular}
    \caption{Timing comparisons of \method{}'s \mlsolver{} and \qpsolver{} for the Huber fitting problem for $n = 16,000$.}
    \label{tab:huber}
\end{table}

\subsection{Bounded Least Squares}\label{sec:ex-bounded-ls}
We compare \method{} one-to-one with other solvers on the bounded least squares problem
\begin{equation}\nonumber
    \begin{aligned}
        &\text{minimize} && (1/2)\|Ax - b\|^2_2 \\
        &\text{subject to} && 0 \leq x \leq 1.
    \end{aligned}
\end{equation}
The problem is a QP~\eqref{eq:problem-formulation-qp} with $P = A^TA$, and $q = A^Tb$.
Unlike in the unconstrained machine learning problems, the $x$-subproblem iterates are not necessarily feasible; however, the $z$-subproblem iterates are.

\paragraph{Problem data.}
We use the first $N$ samples in the YearMSD dataset and generate $n = N/2$ random features. We precompute $P$ and $q$.
The matrix in the linear system solve, $P + \rho I$, is dense.
Since the matrix $P$ comes from real world data, we expect it to also be low-rank.
The fact that $P$ is low-rank and dense suggest that using a randomized preconditioner will lead to significant speedups, similar to those for the elastic net problem in~\S\ref{sec:ex-elastic-net}.

\paragraph{Ablation study.}
First, we examine the impact of the preconditioner and inexact solves for this problem.
(Since this problem is a QP, \method{} does not approximate the $x$-subproblem.)
We solve the bounded least squares problem with $N = 20,000$ data samples and $n = N/2$ features with and without both the randomized preconditioner and inexact $x$-subproblem solves.
We set the absolute and relative termination tolerances to be \texttt{1e-5}.
Figure~\ref{fig:constrained-ls-ablation} shows the convergence of the primal residual, the dual residual, and the objective value.
The inexact solves reduce solve time by over 50\%, and
the preconditioner introduces a very modest overhead (under 2\%) for an approximately 40\% linear system solve time reduction.
Table~\ref{tab:constrained-ls-ablation} details these solve times.
Again, the addition of inexact solves and the preconditioner do not affect the number of iterations it takes for the solution to converge.
As with many QPs, the linear system solve time dominates.

\begin{figure}[h]
    \captionsetup[sub]{font=scriptsize}
    \centering
    \begin{subfigure}[t]{0.32\textwidth}
        \centering
        \includegraphics[width=\columnwidth]{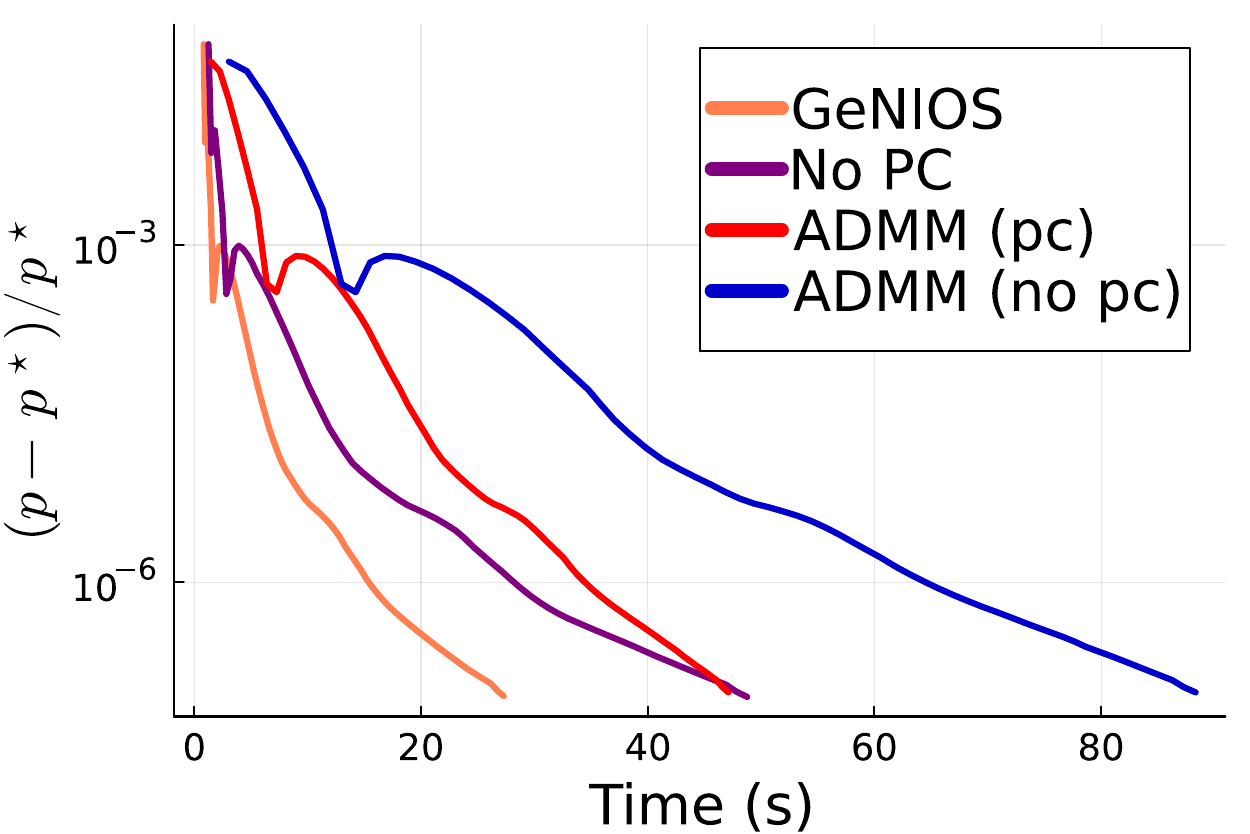}
        \caption{Objective value}
    \end{subfigure}
    \hfill
    \begin{subfigure}[t]{0.32\textwidth}
        \centering
        \includegraphics[width=\columnwidth]{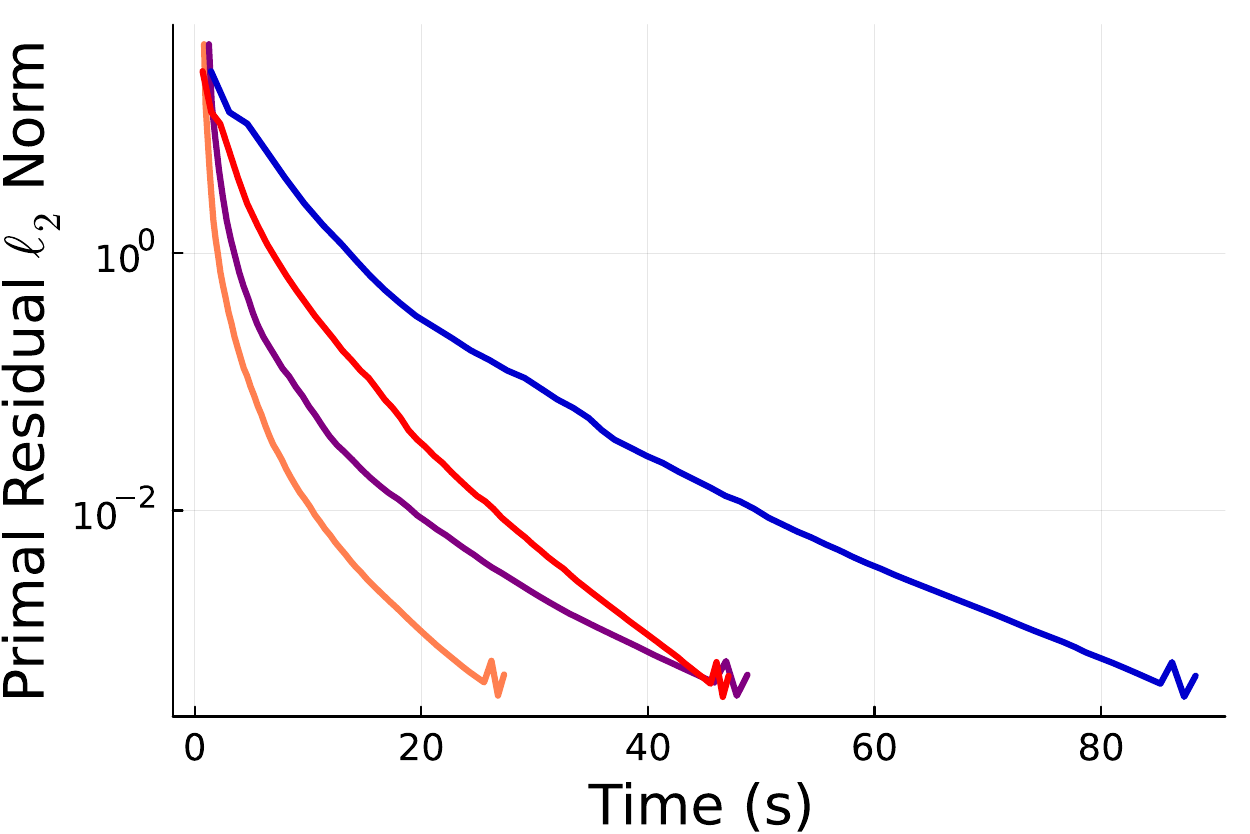}
        \caption{Primal residual}
    \end{subfigure}
    \hfill
    \begin{subfigure}[t]{0.32\textwidth}
        \centering
        \includegraphics[width=\columnwidth]{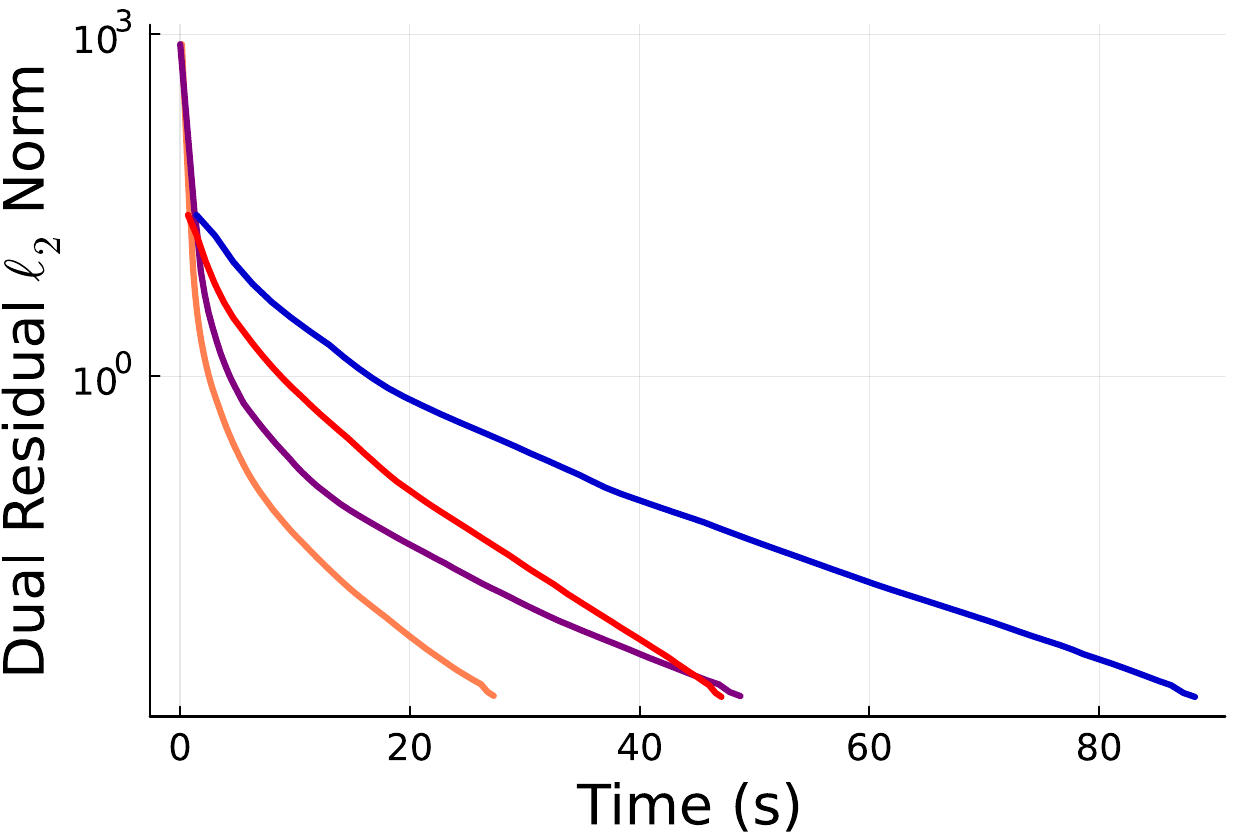}
        \caption{Dual residual}
    \end{subfigure}
    \caption{
    Both inexact subproblem solves and the preconditioner improve \method{}'s convergence time on the bounded least squares problem with the dense YearMSD dataset (\cf the elastic net problem in figure~\ref{fig:en-dense}).
    }
    \label{fig:constrained-ls-ablation}
\end{figure}

\begin{table}[]
    \centering
\ifsubmit\begin{adjustbox}{max width=\textwidth}\fi
\begin{tabular}{@{}lrrrr@{}}
\toprule
&GeNIOS & GeNIOS (no pc) & ADMM & ADMM (no pc)\\
\midrule
setup time (total) & 0.869s & 0.010s & 0.563s & 0.011s\\
\quad preconditioner time & 0.858s & 0.000s & 0.551s & 0.000s\\
solve time & 27.845s & 49.759s & 47.724s & 89.383s\\
\quad number of iterations &   67 &   67 &   67 &   67\\
\quad total linear system time & 25.747s & 47.287s & 45.553s & 86.811s\\
\quad avg. linear system time & 384.285ms & 705.773ms & 679.892ms & 1295.687ms\\
\quad total prox time & 0.001s & 0.001s & 0.001s & 0.001s\\
\quad avg. prox time & 0.013ms & 0.012ms & 0.013ms & 0.013ms\\
total time & 28.714s & 49.768s & 48.286s & 89.394s\\
\bottomrule
\end{tabular}
\ifsubmit\end{adjustbox}\fi
    \caption{Timing comparisons of \method{}'s \mlsolver{} and \qpsolver{} for the constrained least squares problem with $n = 16,000$ features (augmented) and $N = 32,000$ samples from the YearMSD dataset.}
    \label{tab:constrained-ls-ablation}
\end{table}

\paragraph{Comparison with other solvers.}
We solve the same problem, with varying value of $n$ from $250$ to $16,000$, with our solver, the popular QP solver \osqp~\cite{osqp}, the pure Julia conic solver \cosmo~\cite{cosmo}, and the commercial solver \texttt{Mosek}~\cite{mosek}. Both \osqp{} and \cosmo{} are ADMM-based solvers, while \texttt{Mosek} uses an interior point method.
For \cosmo{}, we use both the QDLDL direct solver (the default) and the CG indirect solver, which solves the same reduced system as \method{}'s \qpsolver.
We set all ADMM-based solvers to have absolute and relative termination tolerances of \texttt{1e-4} and use the infinity norm of the residuals, since this is the default in \osqp{} and \cosmo{}. We set \texttt{Mosek}'s primal and dual tolerances to \texttt{1e-4} but otherwise use default parameters.
Both \method{} and \cosmo{}'s indirect solver use inexact solves in this example.
Figure~\ref{fig:constrained-ls-compare} shows the results.
\method{} with and without preconditioning begins to perform much better than other methods as the problem size becomes large.
Preconditoning provides a nontrivial solve time reduction---about 50\%---as the problem sizes grows.
While \osqp{} and \cosmo{} with a direct linear system solve are faster than \method{} for smaller problem sizes, they have worse scaling as the problem size grows due to the matrix factorization.
\begin{figure}[h]
    \begin{subfigure}[t]{0.49\textwidth}
        \centering
        \includegraphics[width=\columnwidth]{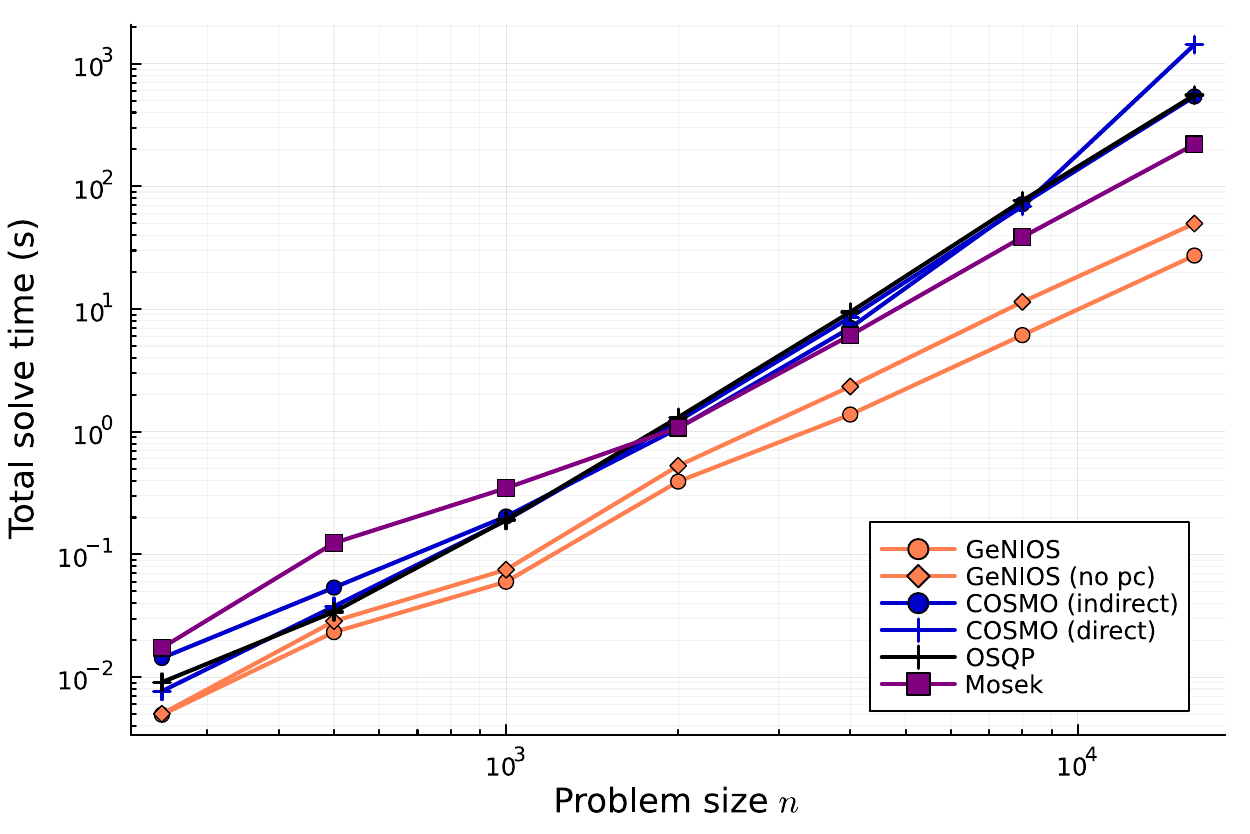}
        \caption{Bounded least squares}
        \label{fig:constrained-ls-compare}
    \end{subfigure}
    \begin{subfigure}[t]{0.48\textwidth}
        \centering
        \includegraphics[width=\columnwidth]{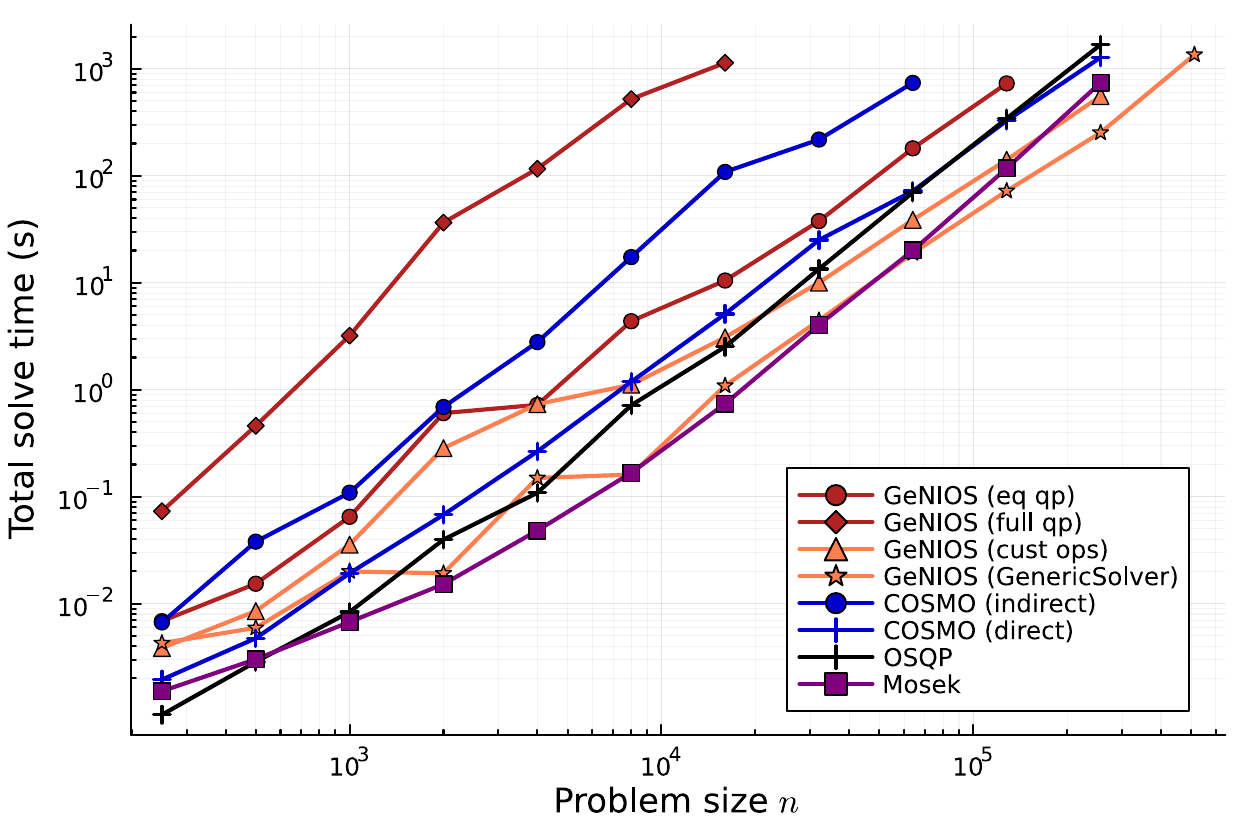}
        \caption{Portfolio optimization}
        \label{fig:portfolio}
    \end{subfigure}
    \caption{\method{} outperforms the \texttt{COSMO}, \texttt{OSQP}, and \texttt{Mosek} solvers for the dense bounded least squares problem (left). For the sparse portfolio optimization problem, \method{} outperforms direct solvers at large problem sizes and enjoys better scaling with problem data size.}
\end{figure}

\subsection{Portfolio optimization}\label{sec:ex-portfolio-opt}
The portfolio optimization problem highlights how \method{}'s \qpsolver{} allows the user to speed up standard QP's by leveraging Julia's multiple dispatch, and how \method{}'s \genericsolver{} provides powerful tools for advanced users to exploit structure.
The portfolio optimization problem finds the fraction of wealth to be invested across a universe of $n$ assets to maximize risk-adjusted return.
If the portfolio is constrained to be long-only, this problem can be written as
\begin{equation}\nonumber
\begin{aligned}
    &\text{minimize}     &&-\mu^Tx + (\gamma/2) x^T \Sigma x \\
    &\text{subject to}   && \ones^Tx = 1 \\
                        &&& x \geq 0,
\end{aligned}
\end{equation}
where the variable $x \in \reals^n$ represents the allocation,
$\mu \in \reals^n$ and $\Sigma \in \symm_+^n$ are the (estimated) return mean and covariances respectively,
and $\gamma \in \reals_{++}$ is a risk-aversion parameter.
Often, the covariance matrix $\Sigma$ has a diagonal-plus-low-rank structure, \ie
\[
\Sigma = FF^T + D,
\]
where $D \in \reals^{n \times n}$ is a diagonal matrix indicating asset-specific risk and $F \in \reals^{n \times k}$ is a factor matrix with $k \ll n$.
This problem is clearly a QP as written.
To fit into the form of \eqref{eq:problem-formulation-qp}, set
\[
    P = \gamma \Sigma, \qquad q = -\mu, \qquad  M = \bmat{\mathbf{1}^T \\ I} \
    \qquad l = \bmat{1 \\ 0}, \qquad u = \bmat{1 \\ \infty}.
\]
We solve the portfolio optimization problem using both \method's \qpsolver{} and its \genericsolver{}, which permits additional performance improvements.
We compare solve times against \osqp{} and \cosmo{} with absolute and relative tolerances set to \texttt{1e-4}, and against \texttt{Mosek} with its primal and dual tolerances set to \texttt{1e-4}.
Again, we use the infinity norm of the residuals in \method{} for the sake of comparison.

\paragraph{Equivalent QP.}
In portfolio optimization problem, the right-hand-side matrix of the $x$-subproblem linear system is a dense matrix; the diagonal-plus-low-rank structure of $\Sigma$ is lost.
To take advantage of structure, the portfolio optimization problem can be reformulated as the following equivalent quadratic program:
\[
\begin{aligned}
&\text{minimize} && (\gamma/2) x^T D x + (\gamma/2) y^Ty - \mu^T x \\
&\text{subject to} && y = F^Tx \\
&&& \ones^T x = 1 \\
&&& x \ge 0.
\end{aligned}
\]
In this equivalent formulation, the matrix $\Sigma$ is no longer formed.
We solve this QP instead of the original QP unless otherwise stated.

\paragraph{QP interface.}
While we can solve the equivalent problem introduced above, \method{} allows us to avoid this reformulation. Instead, we create types for $P$ and the constraint matrix $M$ that implement fast multiplication by taking advantage of structure.
We can multiply by $P$ in $O(nk)$ time, and we can multiply by $M$ or $M^T$ in $O(n)$ time.
Julia's multiple dispatch allows the user to easily define new objects that implement fast operations. 
We provide a tutorial in the `Markowitz Portfolio Optimization, Three Ways' example in the documentation.

\paragraph{Generic interface.}
The constraint set of the portfolio optimization problem is the $n$-dimensional simplex, for which there exists a fast projection.
We can take advantage of this fast projection using \method{}'s \genericsolver{}.
Instead of using the QP formulation~\eqref{eq:problem-formulation-qp}, we formulate the portfolio optimization problem as
\[
\begin{aligned}
    &\text{minimize}     &&-\mu^Tx + (\gamma/2) x^T \Sigma x + I_S(z)\\
    &\text{subject to}   && x - z = 0,
\end{aligned}
\]
where $I_S(z)$ is the indicator function of the simplex $S = \{z \mid \ones^Tz = 1 \text{ and } z \ge 0\}$.
Recognize this formulation as~\eqref{eq:problem-formulation} with $f(x) = (\gamma/2) x^T\Sigma x - \mu^T x$, $g(z) = I_S(z)$, $M = I$, and $c = 0$.
The proximal operator of $g$ evaluated at $v$ is a projection of $v$ onto the set $S$, defined as the solution to the optimization problem
\[
\begin{aligned}
    &\text{minimize} && (1/2)\|\tilde z - v\|_2^2 \\
    &\text{subject to} && \ones^T\tilde z = 1 \\
    &&& \tilde z \ge 0.
\end{aligned}
\]
This problem can be efficiently solved by considering the optimality condition for its dual (see appendix~\ref{app:portfolio-opt}):
\[
\ones^T(v - \nu \ones)_+ = 1.
\]
The primal solution $\tilde z$ can be reconstructed as
\[
\tilde z_i = (v_i - \nu)_+.
\]
Thus, the proximal operator of $g$ reduces to a single-variable root finding problem, which can be efficiently solved via bisection search.
In this experiment, we solve this problem to an accuracy of \texttt{1e-8}.
While $z$-subproblem inexactness is also permitted in \method{} (see~\S\ref{sec:convergence}), we do not fully explore the implications in this work.

\paragraph{Problem data.}
For this problem, we use synthetic data generated as in~\cite[App. A]{osqp}.
The asset specific risk, \ie the entries of the diagonal matrix $D$, are sampled independently and uniformly from the interval $[0, \sqrt{k}]$.
The factor loading matrix $F$ is sparse, with half of the entries randomly selected to be independently sampled standard normals and the other half to be zero.
We set $n = 100k$.
Finally, the return vector $\mu$ also has independent standard normal entries.
We set the risk parameter $\gamma$ to be $1$.
We vary the number of assets $n$ from $250$ to $512,000$.

\paragraph{Numerical results.}
For each value of $n$, we solve the portfolio optimization problem with \method{}, \cosmo{}, \osqp{}, and \texttt{Mosek}, with a time limit of 30 minutes.
We use \method{}'s \qpsolver{} to solve the original QP, the equivalent QP, and the original problem with custom $P$ and $M$ operators.
We use \method{}'s \genericsolver{} to solve the equivalent problem where we deal with the constraint set directly.
We use both \cosmo{}'s indirect and direct linear system solvers.
The results are shown in figure~\ref{fig:portfolio}.
Clearly solving the original, full QP, is a bad idea; the matrix $\Sigma$ becomes dense and much of the structure is lost, resulting in a slow solve time.
When solving the equivalent QP, \method{} outperforms \cosmo{}'s indirect solver (which also uses inexact solves), but both of these solvers are slower than \cosmo{}'s direct solver and \osqp{}.
In this problem, the constraint matrix in the equivalent formulation is sparse (approximately $0.5\%$ nonzeros as $k$ gets large) and highly structured.
As a result, sparse factorization methods perform relatively well, even for large problem sizes (over $80$M nonzeros).
However, the direct method solve times increase at a faster rate, so for a large enough problem size, indirect methods perform better than direct methods.
Solving the original QP using custom operators with \method{} becomes competitive with the direct methods much faster as the problem size gets large, and it also scales at a slower rate.
Finally, \method{}'s \genericsolver{}, which exploits additional structure, begins to outperform the direct methods for moderately-sized problems, while scaling at a rate comparable to the other indirect methods.
The direct solvers, COSMO and Mosek, both ran out of memory for the largest problem size ($n = 512,000$), and \method{}'s QP solver with custom operations takes just over the 30 minute time limit.

\paragraph{Discussion.}
The solve time speedup of \method{}'s \qpsolver{} with custom operators over the equivalent formulation is likely, at least in part, a result of how the data is stored.
Using custom operators permits \method{} to store and use $F$ as a dense matrix instead of storing and using $F$ as part of the sparse constraint matrix.
(This phenomenon is similar to what we observed in the Huber fitting example in~\S\ref{sec:ex-huber}.)
The ability to create and use these custom operators in the original QP highlights the benefits of leveraging Julia's multiple dispatch to define optimization problems.
Because \method{} is written in pure Julia, it does not need the problem data to be a matrix; it only needs linear operators.
Often, it is much easier for the user to identify a fast way of applying $P$, $M$, and $M^T$ in the original formulation~\eqref{eq:problem-formulation-qp} than to identify a good problem reformulation.
Here, for example, we easily defined a fast multiply with the diagonal-plus-low-rank matrix $\Sigma$ and the original constraint matrix $M = \bmat{ \ones & I}^T$.
Furthermore, the Julia ecosystem has many packages that assist with constructing these fast operators, including \texttt{LinearMaps.jl}~\cite{linearmaps_jl}.
The speedup from \method{}'s \genericsolver{} further emphasizes the advantage of allowing the user to naturally specify problem structure to exploit.
In this example, we recognized that projection onto the simplex is fast, and \method{} allows us to handle this part of the problem directly.

\subsection{Signal decomposition}
\label{sec:ex-signal}
The signal decomposition problem demonstrates how \method{}'s \genericsolver{} interface is flexible enough to handle nonconvex problems.
Of course, none of the convergence guarantees apply in this setting; however, ADMM applied to nonconvex problems has been shown to sometimes work well in practice (see~\cite{diamond2018-nonconvex} and references therein).
In the signal decomposition problem~\cite{meyers2023signal}, a time series $y_1, \dots, y_T \in \reals$ is modeled as a sum of components $x^1, \dots, x^K \in \reals^{T}$.
Each of these $K$ components is associated with a loss function $\phi_k :\reals^{T} \to \reals \cup \{\infty\}$, which denotes the implausibility of the current guess of $x^k$.
The function $\phi_k$ can take on the value $\infty$ to encode constraints; a signal $x$ is feasible if $\phi_k(x) < \infty$.
Given an observed signal $y$ and component classes $1, \dots, K$ associated with loss functions $\phi_1, \dots, \phi_K$, the signal decomposition problem is
\[
\begin{aligned}
    & \text{minimize} && (1/T)\|y - x^2 - \cdots - x^K\|_2^2 + \phi_2(x^2) + \cdots + \phi_K(x^K),
\end{aligned}
\]
with variables $x^2, \dots, x^K$.
The first component $x^1$ is assumed to be mean-square small.

In \method{}'s framework, this problem can be phrased as
\[
\begin{aligned}
    & \text{minimize} && (1/T)\|y - x^2 - \cdots - x^K\|_2^2 + \phi_2(z^2) + \cdots + \phi_K(z^K) \\
    & \text{subject to} && x - z = 0,
\end{aligned}
\]
where $x = (x_2, \dots, x_K)$ and $z = (z_2, \dots, z_K)$.
The function $f(x)$ is a quadratic, and the function $g(z)$ is separable across the $K$ components, so the proximal operators for $\phi_2, \dots, \phi_K$ can be evaluated in parallel.

\begin{figure}[h!]
    \captionsetup[sub]{font=scriptsize}
    \centering
    \begin{subfigure}[t]{0.48\textwidth}
        \centering
        \includegraphics[width=\columnwidth]{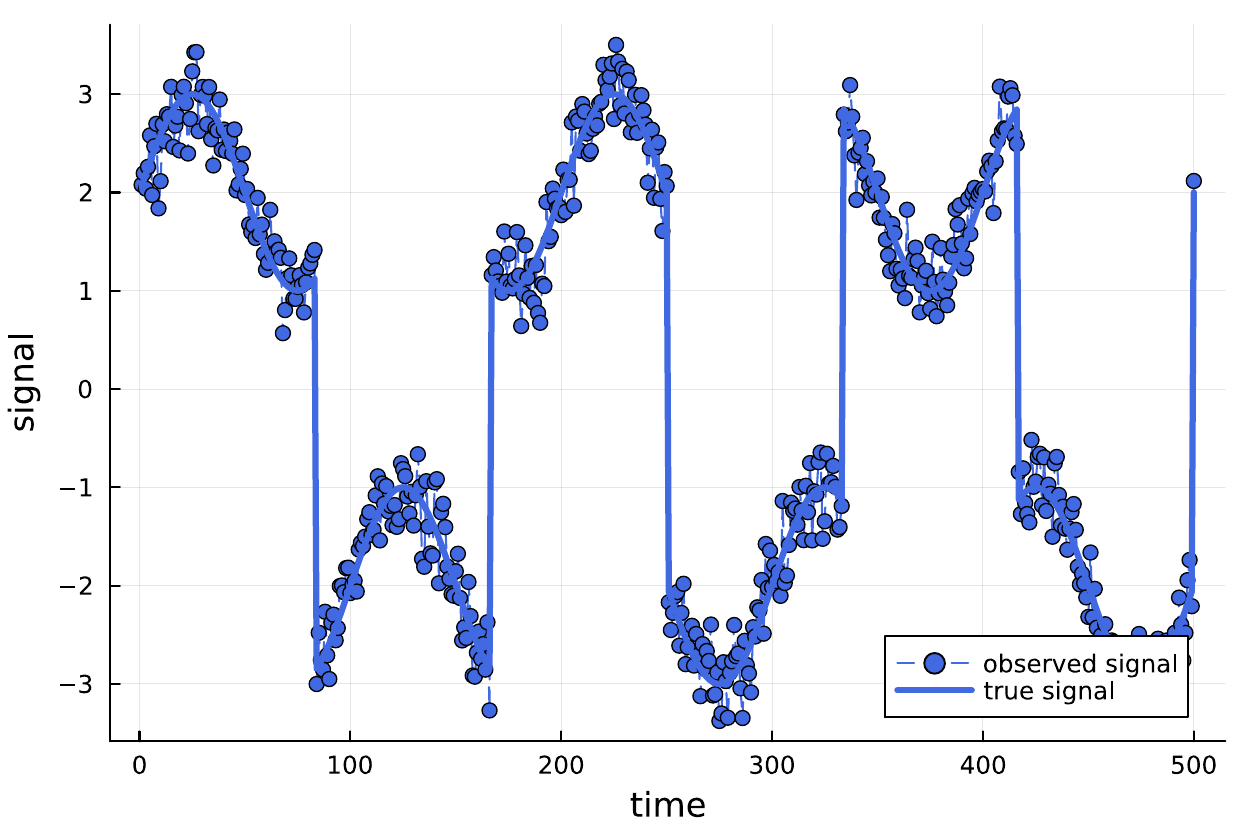}
        \caption{Observed signal}
        \label{fig:signal-decomp-observed}
    \end{subfigure}
    \hfill
    \begin{subfigure}[t]{0.48\textwidth}
        \centering
        \includegraphics[width=\columnwidth]{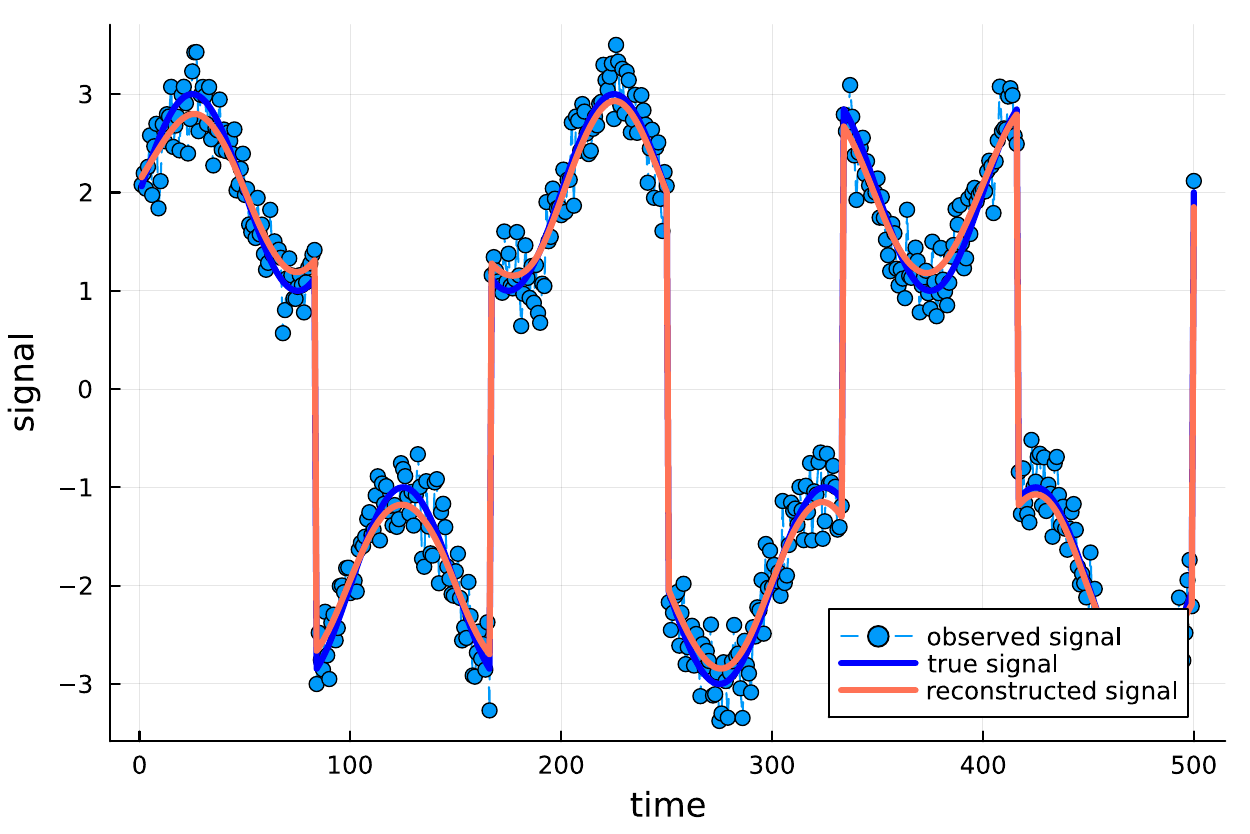}
        \caption{Reconstructed signal}
    \end{subfigure}
    \begin{subfigure}[t]{\textwidth}
        \centering
        \includegraphics[width=\columnwidth]{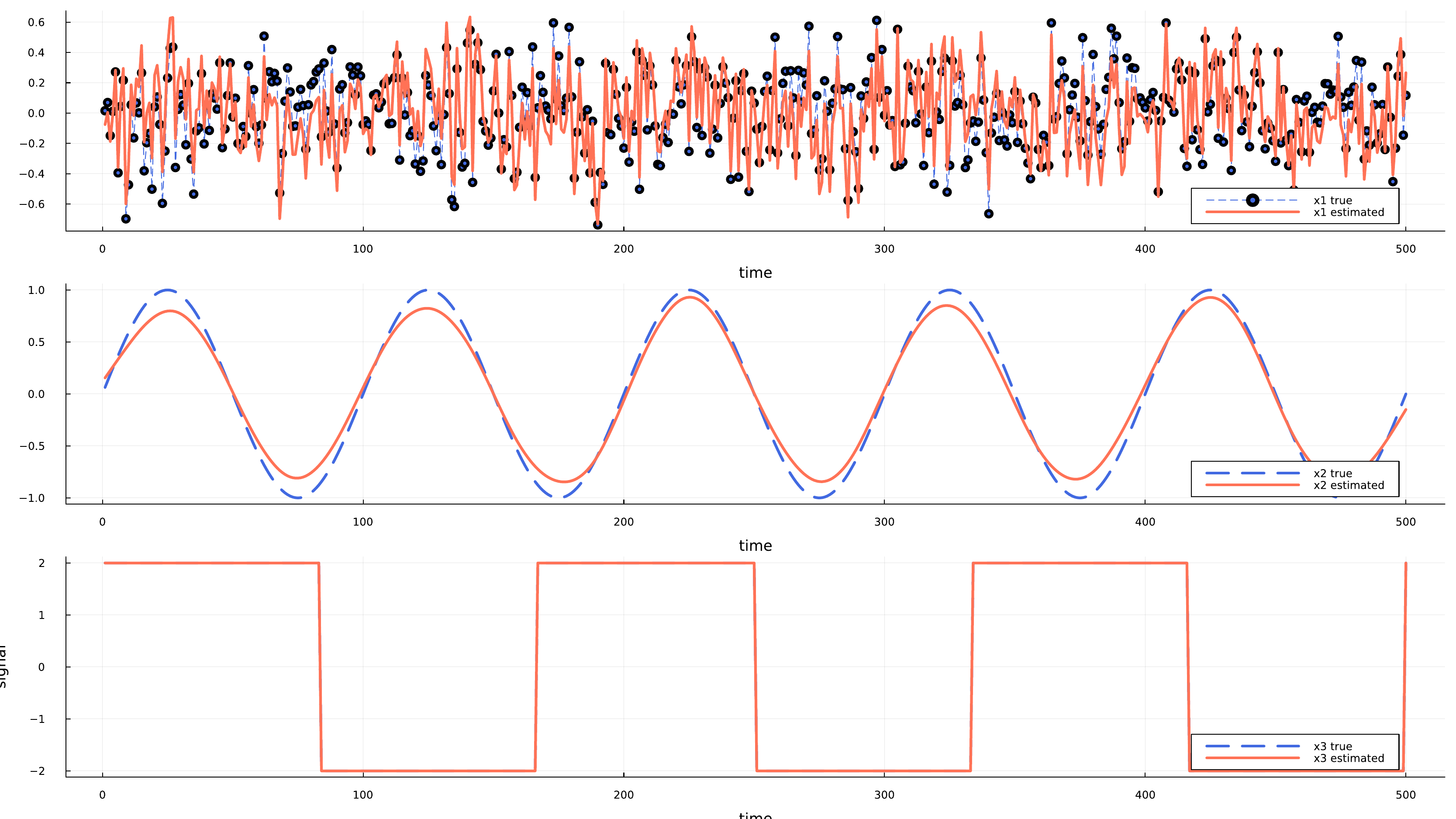}
        \caption{Individual components of the reconstructed signal.}
    \end{subfigure}
    \caption{\method{} accurately decomposes the observed signal into its components.}
    \label{fig:signal-decomp}
\end{figure}

\paragraph{Problem data.}
We generate synthetic data similar to the example in~\cite[\S 2.9]{meyers2023signal} with $T = 500$, and $K = 3$ components.
The first component is Gaussian noise.
The second component is a sine wave.
The last component is a square wave.
We observe the sum of these components (see figure~\ref{fig:signal-decomp-observed}).
We choose three component classes for the signal decomposition problem: mean-square small, mean-square second-order smooth, and a signal constrained to only take on values $\pm \theta$:
\[
\begin{aligned}
    \phi_1(x) &= \frac{1}{T}\|x\|_2^2, \\
    \phi_2(x) &= \frac{1}{T-2}\sum_{t=2}^{T-1} (x_{t+1} - 2x_t + x_{t-1})^2, \\
    \phi_3(x) &= I_{\{\pm \theta\}^T}(x).
\end{aligned}
\]
These classes select for a small signal, a smoothly changing signal, and a binary signal with known amplitude respectively.
Here, we assume the amplitude of the binary signal is known, but in a real example, we would solve the problem for varying values of the amplitude hyperparameter.

\paragraph{Numerical Results.}
The proximal operator for $\phi_2$ requires the solution of an unconstrained convex quadratic program.
Thus, this operator can be evaluated with a simple linear system solve, and a banded matrix can be used to take advantage of structure.
The proximal operator for $\phi_3$ requires the projection onto the nonconvex set $\{\pm \theta\}^T$.
This projection can be computed very quickly, but the nonconvexity of $\phi_3$ makes this optimization problem nonconvex. 
As a result, \method{} is not guaranteed to converge,
but it empirically still works well for this problem.
Figure~\ref{fig:signal-decomp} shows that \method{} recovers the true components from the observed signal.
This example highlights the flexibility afforded by \method{}'s \genericsolver{}, which can be used to solve problems with nonconvex constraint sets.



\section{Conclusion}
\label{sec:conclusion}
This paper introduced the new inexact ADMM solver \method{}, implemented in the Julia language.
This solver approximates the ADMM subproblems and solves these approximate subproblems inexactly.
These approximations yield several benefits.
First, the $x$-subproblem becomes a linear system solve, which \method{} solves efficiently even for large problem sizes with the (Nystr\"{o}m preconditioned) conjugate gradient method.
Second, \method{} avoids conic reformulations by handling the objective function directly, reducing the problem size and improving memory locality.
Despite the approximations and inexact solves, \method{} retains the convergence rate of classic ADMM and can detect infeasibility or unboundedness.
Moreover, \method{} offers a flexible interface, allowing the user to specify the objective function with zeroth, first, and second order oracles. 
It can also work from just the zeroth order oracle, computing higher order derivatives with automatic differentiation.

Through examples, we demonstrate that \method{}'s algorithmic improvements yield substantial speedups over classic ADMM and over existing solvers for large problem sizes.
These improvements allow the user to exploit problem structure using \method{}'s \genericsolver{} interface.
\method{} also includes a specialized \qpsolver{} interface for quadratic programs and a \mlsolver{} interface for machine learning problems. 
Finally, we show that \method{}'s flexible interface allows the user to specify and solve nonconvex problems as well.

There is more room to speed up \method{}.
Parallelization of the dominant operation---matrix-vector multiplies---using GPUs presents an obvious, likely significant speedup.
Future work on the algorithm itself should investigate speedups from approximations or inexact solutions to the $z$-subproblem.
GeNIOS may also benefit from acceleration, for example as in~\cite{tang2024self}, which we leave for future work.
Additionally, the Nystr\"{o}m preconditioner in~\S\ref{sec:preconditioner} performs best when the linear system matrix has the form of low-rank-plus-identity~\eqref{eq:reduc-lin-sys}. Developing optimization problem modeling tools to automatically compile problems into a form with this structure presents another interesting avenue for future work. 
Finally, we only use standard normal test matrices to construct the preconditioner used in our experiments. Other test matrices, such as subsampled trigonometric transforms, may perform better for sparse optimization problems.

\ifsubmit
    \begin{acknowledgements}
\else
    \section*{Acknowledgements}
\fi
The authors thank Chris Rackauckas, Gaurav Arya, Axel Feldmann, and Guillermo Angeris for helpful discussions.
T.\ Diamandis is supported by the Department of Defense (DoD) through the National Defense Science \& Engineering Graduate (NDSEG) Fellowship Program.
B.\ Stellato is supported by the NSF CAREER Award ECCS-2239771.
Z.\ Frangella, S.\ Zhao, and M.\ Udell gratefully acknowledge support from
the National Science Foundation (NSF) Award IIS-2233762,
the Office of Naval Research (ONR) Award N000142212825 
and N000142312203, 
and the Alfred P. Sloan Foundation.
\ifsubmit
    \end{acknowledgements}
\fi


\ifsubmit
    \bibliography{refs}
\else
    \printbibliography
\fi

\appendix
\section{Duality gap bounds for machine learning problems}
\label{appendix:dual-gap}

In this section, we derive bounds for the duality gap for the machine learning examples we consider in~\S\ref{sec:app-ml}.
The derivation is largely inspired by \cite{kim2007interior,koh2007interior}.
We consider problems of the form
\begin{equation}\label{eq:primal}
\begin{aligned}
&\text{minimize} && \ell(x) = \sum_{i=1}^m f(a_i^Tx - b_i) + \lambda_1 \|x\|_1 + (1/2)\lambda_2\|x\|_2^2,
\end{aligned}
\end{equation}
where $f(\cdot)$ is some loss function, which we assume to be convex and differentiable and $\lambda_1,\; \lambda_2 \ge 0$ are coefficients of the regularization terms which are assumed to not both be $0$. (We do not need differentiability of $f$, but is convenient for our purposes.)
Tthe optimality conditions of~\eqref{eq:primal} are that $0$ is contained in the subdifferential, {\it i.e.},
\[
    0 \in \sum_{i=1}^m f'(a_i^Tx - b_i) \cdot a_i + \lambda_1 \partial \|x\|_1 + \lambda_2 x.
\]
By rearranging, we have the condition that
\begin{equation}
    \label{eq:dual-gap-optimality-cond}
    \left(\sum_{i=1}^m f'(a_i^Tx - b_i) \cdot a_i \right)_i + \lambda_2 x_i \in \begin{cases}
    \{+\lambda_1\} & x_i < 0 \\
    \{-\lambda_1\} & x_i > 0 \\
    [-\lambda_1, \lambda_1] & x_i = 0
    \end{cases}
\end{equation}
for $i = 1, \dots, n$.
These optimality conditions indicate that $x = 0$ is a solution to~\eqref{eq:primal} if and only if $\|A^T f'(-b) + \lambda_2 x\|_\infty \le \lambda_1$, where the function $f'$ is applied elementwise.

\paragraph{Lagrangian and primal-dual optimality.}
We introduce new variable $y_i$ and reformulate the primal problem~\eqref{eq:primal} as
\begin{equation}
\label{eq:primal-reformulated}
\begin{aligned}
&\text{minimize} && l(x) = \sum_{i=1}^m f(y_i) + \lambda_1 \|x\|_1 + (1/2)\lambda_2\|x\|_2^2 \\
&\text{subject to} && y = Ax - b.
\end{aligned}
\end{equation}
The Lagrangian is then
\begin{equation}
\label{eq:dual-gap-lagrangian}
L(x, y, \nu) =
\sum_{i=1}^m f(y_i) + \lambda_1 \|x\|_1 + (1/2)\lambda_2\|x\|_2^2 + \nu^T(Ax - b - y).
\end{equation}
A primal dual point $(x, y, \nu)$ is optimal when it is feasible, \ie, $y = Ax - b$ and the gradient of the Lagrangian vanishes:
\begin{equation}
\label{eq:dual-gap-optimality-cond-lagrangian}
    \nu_i = f'(a_i^Tx - b_i)
    \qquad \text{and} \qquad
    (A^T\nu + \lambda_2 x)_i  \in
    {
        \begin{cases}
        \{+\lambda_1\} & x_i < 0 \\
        \{-\lambda_1\} & x_i > 0 \\
        [-\lambda_1, \lambda_1] & x_i = 0.
        \end{cases}
    }
\end{equation}
In deriving the dual function, we will have to consider the case when $\lambda_2 = 0$, in which case we are solving an $\ell_1$-regularized problem, separately from the case that $\lambda_2 > 0$, in which case the regularization term is smooth.

\paragraph{Optimality gap.}
First, we will consider the case when $\lambda_2 > 0$. Partially minimizing the Lagrangian over $x$ and $y$, we get the dual function
\[
g(\nu) = -\sum_{i=1}^m f^*(\nu_i) - b^T\nu - (1/2\lambda_2)\sum_{i=1}^n \left(\left(\abs{(A^T\nu)_i} - \lambda_1 \right)_+ \right)^2,
\]
where $f^*$ is the Fenchel conjugate of $f$~\cite[\S 3.3]{cvxbook}.
If, instead, we have that $\lambda_2 = 0$, then
\[
g(v) = \begin{cases}
-\sum_{i=1}^m f^*(\nu_i) - b^T\nu, & \|A^T\nu\|_\infty \le \lambda_1 \\
-\infty, & \text{otherwise}.
\end{cases}
\]
The dual problem is simply
\begin{equation} \label{eq:dual}
\begin{aligned}
&\text{maximize} && g(\nu),
\end{aligned}
\end{equation}
where the norm ball constraint is encoded in the objective.
Importantly, for any dual feasible point $\nu$, we have that
\[
\ell(x) \ge \ell(x^\star) = g(\nu^\star) \ge g(\nu),
\]
where $x^\star$ and $\nu^\star$ are optimal solutions to~\eqref{eq:primal} and~\eqref{eq:dual} respectively.
It immediately follows that
\begin{equation}\label{eq:dual-gap}
    \frac{\ell(x) - g(\nu)}{\min\left(\ell(x), \lvert{g(\nu)}\rvert\right)} \ge \frac{\ell(x) - \ell(x^\star)}{\min\left(\ell(x), \lvert{g(\nu)}\rvert\right)}.
\end{equation}
We will call the left hand side the \emph{relative duality gap}, which clearly bounds the relative suboptimality of a primal-dual feasible point $(x, \nu)$.

\paragraph{Dual feasible points.}
Now, we must devise a way to construct dual feasible points $\nu$.
Inspired by the optimality conditions~\eqref{eq:dual-gap-optimality-cond-lagrangian}, we construct a dual feasible point by taking $\nu_i = f'(a_i^Tx - b_i)$ and then, when $\lambda_2 = 0$, projecting onto the norm ball given by the first optimality condition. Specifically, in this case, we take
\[
\nu_i = \frac{\lambda_1}{\left\|\sum_{i=1}^m f'(a_i^Tx - b_i) \cdot a_i + \lambda_2 x\right\|_\infty} \cdot f'(a_i^Tx - b_i).
\]
When $x \neq x^\star$, this projection ensures that $\|A^T\nu\|_\infty \le \lambda_1$, and, therefore, the function $g(\nu)$
is finite-valued if the conjugate function is finite valued.
When $x = x^\star$, this dual variable will be optimal, as it satisfies the optimality conditions by construction; the original optimality condition~\eqref{eq:dual-gap-optimality-cond} ensures that $\nu$ will be unaffected by the projection and, therefore, satisfy both conditions in~\eqref{eq:dual-gap-optimality-cond-lagrangian}.

This construction, along with the bound~\eqref{eq:dual-gap} suggests a natural stopping criterion.
For any primal feasible point, we construct a dual feasible point $\nu$. Using the dual feasible point, we evaluate the duality gap.
We then terminate the solver if the relative gap is less than some tolerance $\epsilon$ as
\[
\frac{l(x) - g(\nu)}{\max\left(\lvert{g(\nu)\rvert}, l(x)\right)} \le \epsilon.
\]
If this condition is met, we are guaranteed that the true relative error is also less than $\epsilon$ from \eqref{eq:dual-gap}.

\section{Logistic regression conic form} \label{app:logistic-conic}
Here, we consider a modified version of~\eqref{eq:problem-formulation-qp},
\begin{equation}\label{eq:problem-formulation-logistic-conic}
    \begin{aligned}
        &\text{minimize}     && (1/2)x^TPx + q^Tx \\
        &\text{subject to}   && Mx \in C
    \end{aligned}
\end{equation}
where $C = C_1 \times \cdots \times C_d$ and each $C_i$ is either a box or an exponential cone, defined as
\[
K_\mathrm{exp} = \left\{(x,y,z) \mid y > 0 \text{ and } y\exp(x/y) \le z\right\}.
\]
(We work with this form because it requires only a slight modification of our \qpsolver{} interface~\eqref{eq:problem-formulation-qp}.)
The logistic regression problem is
\[
\begin{aligned}
& \text{minimize} && \sum_{i=1}^N \log\left(1 + \exp(a_i^Tx)\right) + \lambda_1 \|x\|_1,
\end{aligned}
\]
with variable $x \in \reals^n$.
We can reformulate this into an exponential cone program by introducing new variables 
$q \in \reals^n$, $t \in \reals^N$, $r \in \reals^N$, $u \in \reals^N$, $s \in \reals^N$, and $v \in \reals^N$ 
via transformation into epigraph form (see~\cite[\S4.2.4]{cvxbook}).
We also notice that
\[
\log\left(1 + \exp(a_i^Tx)\right) \le t_i \iff \exp(-t_i) + \exp(a_i^Tx) \le 1.
\]
We can transform convex exponentials with inequalities into exponential cone constraints by using the fact
\[
\exp(a) \le b \iff (a, 1, b) \in K_\mathrm{exp}.
\]
Putting these transformations together, we have the equivalent problem
\[
\begin{aligned}
    & \text{minimize}   && \sum_{i=1}^N t_i + \lambda_1 q_i \\
    & \text{subject to} && -q \le x \le q \\
    &&& u + v \le 1 \\
    &&& (-t_i, r_i, u_i) \in K_\mathrm{exp}, \qquad i = 1, \dots N \\
    &&& (a_i^Tx, s_i, v_i) \in K_\mathrm{exp}, \qquad i = 1, \dots N \\
    &&& r = 1 \\
    &&& s = 1.
\end{aligned}
\]
After completing the transformation and putting it into the conic form~\eqref{eq:problem-formulation-logistic-conic}, 
the matrix $P = 0$, $q \in \reals^{2n + 5N}$, and $M \in \reals^{2n + 9N \times 2n + 5N}$. 
Although the side dimensions of $M$ are large, this matrix is very sparse and highly structured. 
Note that we need the constraints $r = 1$ and $s = 1$ to write the problem in GeNIOS's conic form, 
although these constraints can be mathematically included in the exponential cone constraint.

\section{Projection onto the simplex} \label{app:portfolio-opt}
In this section, we derive an efficient method to project onto the simplex, \ie to solve the optimization problem
\[
\begin{aligned}
    &\text{minimize} && (1/2)\|\tilde z - v\|_2^2 \\
    &\text{subject to} && \ones^T\tilde z = 1 \\
    &&& \tilde z \ge 0.
\end{aligned}
\]
Our approach is similar to the approach used to solve the water-filling problem in communications.
The Lagrangian for this problem is
\[
\mathcal{L} ({\tilde{z}}, \nu, \lambda) = (1/2)\|{\tilde z} - v\|_2^2 + \nu(\ones^T{\tilde z} - 1) - \lambda^T {\tilde z}.
\]
Denote the optimal primal and dual variables by ${\tilde z}^\star$, $\nu^\star$, and $\lambda^\star$.
The optimality conditions are that ${\tilde z}^\star$ is primal feasible, dual feasibility $\lambda^\star \ge 0$, complementary slackness ${\tilde z}^\star_i \lambda^\star_i = 0$,
and that the gradient of the Lagrangian vanishes:
\[
{\tilde z}^\star - v + \nu^\star \ones - \lambda^\star = 0.
\]
From these conditions, we can conclude that
\[
{\tilde z}^\star_i > 0 \implies \tilde z_i^\star = v_i - \nu^\star.
\]
Otherwise, we have that
\[
v_i - \nu^\star + \lambda^\star_i = 0 \implies v_i - \nu^\star \le 0.
\]
Putting these together, we can write ${\tilde z}^\star$ in terms $\nu^\star$ as
\[
    \tilde z^\star = (v - \nu^\star \ones)_+.
\]
Plugging this expression for $\tilde z^\star$ into the constraint that the sum of the entries is equal to $1$, we see that $\nu^\star$ must satisfy
\[
    \sum_{i=1}^n (v_i - \nu^\star)_+ = 1.
\]
We can solve this single variable equation with bisection search for $\nu^\star$, and then solve for $\tilde z^\star$.
Easy upper and lower bounds for $\nu^\star$ are $\max_i \abs{v_i}$ and $ -(\max_i \abs{v_i} + 1)$ respectively.

\section{Timing tables for solver comparisons}\label{appendix:timing}

\begin{table}[h!]
    \centering
    \ra{1.3}
\begin{adjustbox}{max width=\textwidth}
\begin{tabular}{@{}lrrrrrr@{}}
\toprule
$n$ &GeNIOS & GeNIOS (no pc) & COSMO (indirect) & COSMO (direct) & OSQP & Mosek\\
\midrule
250 & 0.005 & 0.005 & 0.014 & 0.008 & 0.009 & 0.017 \\
500 & 0.023 & 0.029 & 0.054 & 0.038 & 0.034 & 0.124 \\
1000 & 0.060 & 0.075 & 0.204 & 0.190 & 0.191 & 0.347 \\
2000 & 0.393 & 0.528 & 1.068 & 1.220 & 1.309 & 1.079 \\
4000 & 1.381 & 2.337 & 7.033 & 8.540 & 9.448 & 6.109 \\
8000 & 6.131 & 11.428 & 71.613 & 68.764 & 76.332 & 38.550 \\
16000 & 27.301 & 49.767 & 540.350 & 1427.843 & 554.088 & 220.001 \\
\bottomrule
\end{tabular}
\end{adjustbox}
    \caption{Timings for the constrained least squares experiment in \S\ref{sec:ex-bounded-ls} and figure~\ref{fig:constrained-ls-compare}. All units are seconds.}
    \label{tab:ls-compare}
\end{table}

\begin{table}[h!]
    \centering
    \ra{1.3}
\begin{adjustbox}{max width=\textwidth}
\begin{tabular}{@{}lrrrrrrrr@{}}
\toprule
$n$ & GeNIOS (eq qp) & GeNIOS (full qp) & GeNIOS (cust ops) & GeNIOS (GenericSolver) & COSMO (indirect) & COSMO (direct) & OSQP & Mosek \\
\midrule
250 & 0.007 & 0.073 & 0.004 & 0.004 & 0.007 & 0.002 & 0.001 & 0.002 \\
500 & 0.015 & 0.460 & 0.009 & 0.006 & 0.038 & 0.005 & 0.003 & 0.003 \\
1,000 & 0.065 & 3.199 & 0.036 & 0.020 & 0.109 & 0.019 & 0.008 & 0.007 \\
2,000 & 0.604 & 36.451 & 0.284 & 0.019 & 0.689 & 0.068 & 0.040 & 0.015 \\
4,000 & 0.721 & 116.178 & 0.732 & 0.149 & 2.784 & 0.265 & 0.109 & 0.048 \\
8,000 & 4.366 & 520.425 & 1.113 & 0.161 & 17.360 & 1.185 & 0.715 & 0.167 \\
16,000 & 10.483 & 1135.043 & 3.083 & 1.089 & 108.480 & 5.110 & 2.519 & 0.738 \\
32,000 & 37.676 & - & 10.011 & 4.445 & 217.755 & 25.026 & 13.421 & 4.038 \\
64,000 & 180.258 & - & 38.647 & 18.851 & 737.456 & 71.895 & 69.671 & 20.011 \\
128,000 & 729.207 & - & 140.823 & 72.045 & - & 326.915 & 343.087 & 117.370 \\
256,000 & - & - & 554.239 & 252.642 & - & 1274.387 & 1677.730 & 740.564 \\
512,000 & - & - & - & 1353.556 & - & o.o.m. & - & o.o.m. \\
\bottomrule
\end{tabular}
\end{adjustbox}
    \caption{Timings for the portfolio optimization experiment in \S\ref{sec:ex-portfolio-opt} and figure~\ref{fig:portfolio}. We gave solvers 30 minutes to solve this problem. Mosek and COSMO's direct solver ran out of memory (o.o.m.) for $n=512,000$. All units are seconds.}
    \label{tab:portfolio-compare}
\end{table}

\subsection{Additional solver comparisons for~\S\ref{sec:ex-elastic-net} and~\S\ref{sec:ex-logistic}}
Tables~\ref{tab:ls-compare} and~\ref{tab:portfolio-compare} contain the data in figures~\ref{fig:constrained-ls-compare} and~\ref{fig:portfolio} respectively. 
Tables~\ref{tab:elastic-net-compare} and~\ref{tab:logistic-compare} compare \method{} to the convex optimization solvers OSQP, Mosek, and COSMO, and to the fast iterative shrinkage thresholding algorithm (FISTA)~\cite{beck2009fast}. FISTA is a special-purpose algorithm for $\ell_1$-regularized machine learning problems. For FISTA, we use the theoretical step size parameter. We measure convergence using the primal and dual residuals for all solvers, instead of using the dual gap as we do in~\S\ref{sec:ex-elastic-net} and~\S\ref{sec:ex-logistic}.
Since OSQP only solves quadratic programs, we cannot use to solve the logistic regression problem.
\begin{table}[h]
    \centering
    \ra{1.3}
\begin{adjustbox}{max width=\textwidth}
\begin{tabular}{@{}lrrrrrr@{}}
\toprule
Dataset & GeNIOS & OSQP & COSMO (indirect) & COSMO (direct) & Mosek & FISTA \\
\midrule
YearMSD & 7.153s & 132.716s & - & 122.610s & 35.262s & 120.528s \\
real-sim & 2.710s & 540.195s & 209.921s & 501.304s & 37.262s & 9.206s \\
\bottomrule
\end{tabular}
\end{adjustbox}
    \caption{We compare \method{} to other solvers on the elastic net experiment in \S\ref{sec:ex-elastic-net}. We used the first 21k samples from real-sim and cut YearMSD to 5k samples and 10k features in these comparisons to keep solve times reasonable. COSMO's indirect solver did not converge in under 10 minutes on the YearMSD dataset.}
    \label{tab:elastic-net-compare}
\end{table}

\begin{table}[h]
    \centering
    \ra{1.3}
\begin{adjustbox}{max width=\textwidth}
\begin{tabular}{@{}lrrrrr@{}}
\toprule
Dataset & GeNIOS & COSMO (indirect) & COSMO (direct) & Mosek & FISTA \\
\midrule
real-sim & 37.593s & 2798.561s & 5256.906s & 98.882s & 42.133s \\
news20 & 25.284s & 633.446s & 1424.136s & 98.205s & 57.623s \\
\bottomrule
\end{tabular}
\end{adjustbox}
    \caption{We compare \method{} to other solvers on the logistic regression experiment in \S\ref{sec:ex-logistic}. The news20 dataset has 20k samples and 62k features. }
    \label{tab:logistic-compare}
\end{table}
\end{document}